\def\eg,{\textit{e.g.,}}
\def\etc.{\textit{etc.}}
\def\ie,{\textit{i.e.,}}
\def\ax#1{A\ref{ax#1}}
\def\axioms/{A1--A4}
\def\lem#1{L\ref{lem#1}}
\theoremstyle{definition}
\newtheorem*{definition}{Definition}
\newtheorem*{remark}{Remark}
\newtheorem*{example}{Example}
\theoremstyle{plain}
\newtheorem{lemma}{Lemma}
\newtheorem{theorem}[lemma]{Theorem}
\newtheorem*{claim}{Claim}
\def\-#1{{\overline{#1}}}
\def\S#1{{#1^{\{2\}}}}
\def\pmod#1{\ (\mathrm{mod}\:#1)}
\def\rank{\mathop{\mathrm{rank}}\nolimits}
\def\ind{\mathop{\mathrm{ind}}\nolimits}
\def\GL{\mathop{\mathit{GL}}\nolimits}
\def\K#1{{#1}/{\pm1}}
\def\tK#1#2{{#1}/{#2}}
\def\omicron{{\rm o}}
\let\epsilon\varepsilon
\let\oldstar\star
\def\star{\mathord{\oldstar}}
\def\NN{{\mathbb{N}}}
\def\PP{{\mathbb{P}}}
\def\RR{{\mathbb{R}}}
\def\ZZ{{\mathbb{Z}}}
\def\diagcross{\text{\rlap{$\diagup$}$\diagdown$}}
\def\diagdots{\text{\raisebox{3pt}{$\cdot$}$\cdot$\raisebox{-3pt}{$\cdot$}}}%
\let\implies\Rightarrow
\let\iff\Leftrightarrow
\let\tofrom\leftrightarrow
\title{Kummer structures}
\author{Adam Chalcraft\\Michael Fryers}
\begin{document}
\maketitle

Suppose we take an abelian group $G$ and let $\K{G}$ be the quotient of $G$
by the action of negation.
What structure does $\K{G}$ inherit from the group structure of $G$?

Let us write $\-x\in \K{G}$ for the image of $x\in G$.
Then given $\-x,\-y\in \K{G}$, we cannot define $\-{x+y}$ uniquely,
because $\-{-x}=\-x$ but $\-{-x+y}\neq\-{x+y}$ in general;
but we can define the unordered pair $\{\-{x+y},\-{x-y}\}$.
We thus get a map $\kappa$ from $\S{(\K{G})}$, the set of unordered pairs of
elements of $\K{G}$, to itself.
We call the structure $(\K{G},\kappa)$ the \emph{Kummer} of $G$.

An example from geometry explains our use of the name \emph{Kummer}:
the quotient of an Abelian surface (\ie, a two-dimensional
projective algebraic group) by $\pm1$ is called a \emph{Kummer surface}.

In this paper we propose some axioms that hold for the structure
$(\K{G},\kappa)$, and show that every structure satisfying those axioms
either is the Kummer of a unique group, or comes from one other construction,
the quotient of a 2-torsion group by an involution.
The proofs are constructive, showing how $G$ can be reconstructed from
$(\K{G},\kappa)$.

\section{Axioms}\label{secAxioms}

\begin{definition}
  A \emph{Kummer structure} is a set $K$ with a map $\kappa\colon\S K\to\S K$
  satisfying the following axioms:
  (we use the notation $a\ b\to c\ d$ to mean
  $\{a,b\}\stackrel\kappa\longmapsto\{c,d\}$)

  \begin{enumerate}[{A}1.]
  \def\itemax#1{\item\label{ax#1}}%
  \itemax{id} There is an element $0\in K$ such that for every $a\in K$
    we have $a\ 0\to a\ a$.
  \itemax{aa02a} For every $a\in K$ there is an element $2a\in K$ such that
    $a\ a\to 0\ 2a$.
  \itemax{assoc} Every $a,b,c\in K$ fit into a diagram like this:
    $$
    \begin{array}{*5c@{}l}
      &c&&c&c\\
      a&b&\to&p_0&p_1\\
      &\downarrow&&\downarrow&\downarrow\\
      a&q_0&\to&s_0&s_1\\
      a&q_1&\to&s_2&s_3\\
    \end{array}
    $$
    (here the downward arrows mean $c\ b\to q_0\ q_1$ \etc.).
  \itemax{2ishom} $2$ is a homomorphism: that is, if $a\ b\to c\ d$ then $2a\ 2b\to 2c\ 2d$.
  \end{enumerate}
\end{definition}

\begin{remark}
  It is clear that the element $0$ and the map $2$ described in the axioms are
  uniquely determined by $\kappa$.
  
  We will write $a\ b\to c\ \star$ to mean that $a\ b\to c\ d$ for some
  unspecified $d$, and use the obvious notation $4a:=2(2a)$.
\end{remark}

\section{Examples}\label{secExamples}

\begin{definition}
  Given an abelian group $(G,+)$,
  the \emph{Kummer} of $G$ is $(\K{G},\kappa)$, where
  $$
    \K{G}:=\{\,\{x,-x\}\mid x\in G\,\},
  $$
  and (writing $\-x$ for $\{x,-x\}$) $\kappa$ is defined by
  $$
  \begin{array}{*5c@{}l}
    \-x&\-y&\to&\-{x+y}&\-{x-y}&.\\
  \end{array}
  $$
\end{definition}

\begin{remark}
  It is easy to check that $\K{G}$ satisfies axioms \axioms/ with
  $0=\-0$ and $2\-x=\-{2x}$.
\end{remark}

\begin{example}
  Let $K$ be the closed interval $[-1,1]$, and let $\kappa$ be defined by
  the rule
  $$
    a\ b\to c\ \star\quad\iff\quad a^2+b^2+c^2 = 2abc+1
  $$
  (so $\kappa\{a,b\}$ is the set of solutions of this equation.)
  Then the axioms \axioms/ can be checked by hand.  Alternatively, one
  may observe that this $K$ is the Kummer of the group $\RR/2\pi\ZZ$, where
  $\-\theta$ is represented by $\cos\theta\in [-1,1]$.
\end{example}

\begin{example}
  (For algebraic geometers)
  If $G=E(k)$ is the set of points of an elliptic curve in Weierstra\ss\ form
  over an algebraically closed field $k$, then $K=\K{G}$ can be identified
  with $\PP^1(k)$ by representing a point $\-{(x,y)}$ by $x$.
  The map $\kappa$ can be given by an algebraic formula like that in the
  previous example.  If $k$ is not algebraically closed, then $K$ is the
  subset of $\PP^1(k)$ consisting of $x$-coordinates of $k$-points of $E$.
\end{example}

Since the axioms were intended to capture the structure of Kummers
of abelian groups, it would not be surprising if this were the only example
of a Kummer structure.
There is, however, another construction, which we will later show leads
to Kummer structures not isomorphic to the Kummers of groups:

\begin{definition}
  Let $G$ be a 2-torsion abelian group, and $\iota\colon G\to G$ an involution
  (that is, an automorphism with $\iota^2=1$).
  The \emph{twisted Kummer} of $(G,\iota)$ is $(\tK{G}{\iota},\kappa)$, where
  $$
    \tK{G}{\iota}:=\{\,\{x,\iota x\}\mid x\in G\,\},
  $$
  and (writing $\-x$ for $\{x,\iota x\}$) $\kappa$ is defined by
  $$
  \begin{array}{*5c@{}l}
    \-x&\-y&\to&\-{x+y}&\-{x+\iota y}&.\\
  \end{array}
  $$

  For convenience, we define a \emph{twisted group} to be a 2-torsion
  abelian group together with an involution.
\end{definition}

\begin{remark}
  Again, it is easy to check the axioms, and that $0=\-0$ and
  $2\-x=\-{x+\iota x}$.

  This construction still works in the special case where $\iota$ is the
  identity, but since, for a 2-torsion group, $-1$ is also the identity,
  we get nothing new in this case: $\tK{G}{1}=\K{G}=G$.
\end{remark}

We shall show that every Kummer structure is either a Kummer
or a twisted Kummer, and describe the few which have both constructions.

\begin{example}
  We give one example of a structure which satisfies A1--A3 but not
  \ax{2ishom}, to show that \ax{2ishom} is not redundant.

  Let $Q_8=\{1,-1,i,-i,j,-j,k,-k\}$ be the quaternion group of
  order 8.  Quotienting $Q_8$ by the inverse operation gives a set of 5
  elements, $K=\{\-1,\-{-1},\-i,\-j,\-k\}$.
  It is straightforward to check that we can define
  $$
  \begin{array}{*5c@{}l}
    \-x&\-y&\to&\-{xy}&\-{xy^{-1}}&,\\
  \end{array}
  \quad 0=\-1,\quad 2\-x=\-{x^2},
  $$
  and the axioms A1--A3 then hold,
  but \ax{2ishom} fails: $\-i\ \-j\to\-k\ \-k$ should imply
  $\-{-1}\ \-{-1}\to \-{-1}\ \-{-1}$,
  but in fact $\-{-1}\ \-{-1}\to \-1\ \-1$.

  (We can regard $K$ as the `Kummer' of $Q_8$ -- but note that for most
  non-abelian groups we cannot even give a consistent
  definition for $\kappa$ of this form: to be consistent we need
  $\kappa\{\-x,\-y\}=\kappa\{\-y,\-x\}$ to hold.)
\end{example}

\section{First properties, and the 2-torsion group}\label{sec2torsion}

In this section $K$ shall be a Kummer structure.
We will collect some useful consequences of the axioms,
which lead us to a description of the \emph{2-torsion}
elements of $K$ -- \ie, elements in the kernel of the map~$2$.

\begin{lemma}
  The following hold for any $a,b,c,d\in K$.
  \begin{enumerate}[{L}1.]
  \def\itemlem#1{\item\label{lem#1}}%
  \itemlem{20=0} $20=0$.
  \itemlem{abc*} $a\ b\to c\ \star \iff b\ c\to a\ \star$.
  \itemlem{ab0*} $a\ b\to 0\ \star \iff a=b$.
  \itemlem{cd2a2b} $a\ b\to c\ d \implies c\ d\to 2a\ 2b$.
  \itemlem{abcc} Let $a\ b\to c\ d$.  Then $c=d \iff (2a=0$ or $2b=0)$.
  \itemlem{[2]} If $2a=2b=0$ then $a\ b\to c\ c$ for some $c$ with
    $2c=0$.
  \itemlem{pqrs} Given the notation of \ax{assoc}, there are also $r_0,r_1$
    such that
    $$
    \begin{array}{*5c@{}l}
      &c\\
      &a\\
      &\downarrow\\
      b&r_0&\to&s_0&s_3\\
      b&r_1&\to&s_2&s_1&.\\
    \end{array}
    $$
  \end{enumerate}
\end{lemma}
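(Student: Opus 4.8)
The plan is to prove L1--L7 from the axioms and the earlier parts, in roughly the order L1; L3; L2; the implication ``if $2a=0$ then $\kappa\{a,b\}$ is a singleton'' (which is half of L5); L4; the rest of L5; L6; L7. The recurring device is to instantiate A3 with one of its three inputs taken to be $0$ or equal to another of them, so that several rows of the diagram collapse by A1 and A2; below ``run A3 on $(x,y,z)$'' means take the letters $a,b,c$ of A3 to be $x,y,z$. It is convenient to note first that the A3-diagram has two commuting relabelling symmetries --- swapping $p_0\leftrightarrow p_1$ together with $s_0\leftrightarrow s_1$, $s_2\leftrightarrow s_3$, and swapping $q_0\leftrightarrow q_1$ together with $s_0\leftrightarrow s_2$, $s_1\leftrightarrow s_3$ --- each of which sends valid diagrams to valid diagrams, so one may always normalise which representative sits in a prescribed slot.

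L1 is immediate, since A1 and A2 both compute $\kappa\{0,0\}$, forcing $20=0$. For L3 the content is the forward implication: given $a\ b\to 0\ \star$, run A3 on $(a,b,b)$; the rows out of $b\ b$ and $b\ 0$ collapse by A2 and A1 to $b\ b\to 0\ 2b$ and $b\ 0\to b\ b$, and $a\ 0\to a\ a$, so after normalising the two $0$'s into the slots $p_0,q_0$ one reads off $a=b$; the converse is A2. L2 is the crux: given $a\ b\to c\ d$, run A3 on $(a,b,c)$, normalise $p_0=c$ so that the row $c\ p_0=c\ c$ collapses by A2 to $c\ c\to 0\ 2c$, and normalise its $0$ into $s_0$; then the row $a\ q_0\to s_0\ s_1=0\ s_1$ with L3 forces $q_0=a$, so the row $c\ b\to q_0\ q_1$ reads $c\ b\to a\ \star$, that is, $b\ c\to a\ \star$. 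In particular the ternary relation ``$x\ y\to z\ \star$'' is symmetric in $x,y,z$, which we use freely below.

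Now the singleton fact: running A3 on $(b,a,a)$ and collapsing (the $a\ a$ row gives $\{0\}$ as $2a=0$; the $b\ 0$ rows give $b$) shows every $p\in\kappa\{a,b\}$ has $\kappa\{a,p\}=\{b\}$; given $u\in\kappa\{a,b\}$, running A3 on $(u,b,a)$ (again the $a\ a$ row collapses to $\{0\}$) and applying L3 to the row $u\ q_1\to 0\ \star$ forces $q_1=u$, whence $\kappa\{a,b\}$ has only one element. For L4, run A3 on $(a,a,d)$, normalise $q_0=b$ (valid since $b\in\kappa\{a,d\}$) and $p_0=2a$; the hypothesis $a\ b\to c\ d$ pins the last entry $s_0=c$, and the row $d\ 2a\to s_0\ s_2$ then gives $2a\in\kappa\{c,d\}$, while the mirror instance on $(b,b,d)$ gives $2b\in\kappa\{c,d\}$. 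If $2a\neq 2b$ this identifies $\kappa\{c,d\}=\{2a,2b\}$; if $2a=2b$ then A4 gives $\{2c,2d\}=\kappa\{2a,2a\}=\{0,4a\}$, so some $2c=0$, and by the singleton fact $\kappa\{c,d\}$ is then a single element, necessarily $2a$. The rest of L5 now follows from L4: $a\ b\to c\ c$ gives $c\ c\to 2a\ 2b$ (L4) and $c\ c\to 0\ 2c$ (A2), so $0\in\{2a,2b\}$; and for $\Leftarrow$ with $2a=0$, L4 gives $c\ d\to 0\ \star$, so $c=d$ by L3. L6 is then immediate: $2a=2b=0$ gives $a\ b\to c\ c$ by L5, then $c\ c\to 0\ 0$ by L4, then $2c=0$ by A2.

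Finally L7: run A3 on the permuted triple $(c,a,b)$, producing $r_0,r_1\in\kappa\{a,c\}$ with $c\ a\to r_0\ r_1$ and rows out of $b\ r_0$ and $b\ r_1$; since $\kappa\{a,b\}$ and the four ``third-level'' elements occurring are intrinsic to $\{a,b,c\}$ (by the symmetry from L2, independent of the order in which A3 is applied), the outputs of the new diagram are a relabelling of $s_0,\dots,s_3$, and matching the rows out of $c\ q_0,c\ q_1$ in the new diagram against those out of $c\ p_0,c\ p_1$ in the old one pins the pairing and yields the stated completion. I expect the real work to be L2 --- getting the three-fold symmetry started, before which the axioms look quite rigid --- and, more routinely, keeping track of the degenerate $2$-torsion configurations around L4 and the combinatorial matching in L7.
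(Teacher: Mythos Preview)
Your arguments for L1--L6 are correct. The ordering differs from the paper's (you prove L3 before L2, and the ``singleton fact'' half of L5 before L4), and some of the A3 instantiations are different (for L4 you use A3 on $(a,a,d)$ where the paper uses A3 with the output $c$ in the third slot), but each step goes through. The proof of the singleton fact via A3 on $(u,b,a)$ is terser than it should be---the ``$a\ a$ row'' is really the vertical column $a\ p_0$ after normalising $p_0=a$, which is legitimate since $a\in\kappa\{u,b\}$ by L2---but once unpacked it is fine.

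The gap is in L7, which you have underestimated. Running A3 on one permuted triple, say $(c,a,b)$, gives a second diagram with rows $b\ r_0\to S_0\ S_2$ and $b\ r_1\to S_1\ S_3$, and matching the $c\ p_i$ rows across the two diagrams gives only $\{S_0,S_1\}=\{s_0,s_2\}$ and $\{S_2,S_3\}=\{s_1,s_3\}$. This does \emph{not} pin the pairing: it is entirely consistent with $b\ r_0\to s_0\ s_1$ and $b\ r_1\to s_2\ s_3$, which is not the conclusion of L7. The paper needs \emph{three} applications of A3 (on $(a,b,c)$, $(b,a,c)$, and $(a,c,b)$), and cross-matching all three still leaves a residual bad case in which, say, $s_1=s_2$; this case is then eliminated by invoking L4 and L5 to force a 2-torsion contradiction. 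So the ``combinatorial matching in L7'' is not routine at all---it is the hardest part of the lemma, and your expectation that L2 carries the real weight is exactly backwards.
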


\begin{proof}
  \lem{20=0}.
    \ax{id}${} \implies 0\ 0\to 0\ 0$;
    but \ax{aa02a}${} \implies 0\ 0\to 0\ 20$;
    so $20=0$.

  \lem{abc*}.
    Given $a\ b\to c\ \star$, $b\ b\to 0\ 2b$ and
    $a\ 0\to a\ a$ are axioms, so we can fill in this case of \ax{assoc}:
    $$
    \begin{array}{*5c@{}l}
      &b&&b&b\\
      a&b&\to&c&\star\\
      &\downarrow&&\downarrow&\downarrow\\
      a&0&\to&a&a\\
      a&2b&\to&\star&\star&,\\
    \end{array}
    $$
    and read off $b\ c\to a\ \star$.
    The other implication follows by symmetry.

  \lem{ab0*}.
    Put $c=0$ in \lem{abc*}: by \ax{id}, $b\ 0\to a\ \star \iff a=b$.

  \lem{cd2a2b}.
    We first prove $c\ d\to 2a\ \star$:
    we have (using \ax{assoc} and \lem{abc*})
    $$
    \begin{array}{*5c@{}l}
      &c&&c&c\\
      a&b&\to&c&d\\
      &\downarrow&&\downarrow&\downarrow\\
      a&a&\to&p&q\\
      a&e&\to&r&s&,\\
    \end{array}
    $$
    and \ax{aa02a} implies $\{p,q\}=\{0,2a\}$.
    If $q=2a$ then we read off $c\ d\to 2a\ \star$.
    If $p=2a$ then we read off $c\ c\to 2a\ \star$,
    but also $c\ d\to 0\ \star$, so that (by \lem{ab0*}) $c=d$,
    so again $c\ d\to 2a\ \star$.

    Symmetry gives also $c\ d\to 2b\ \star$, so we're done unless $2a=2b$.

    By \ax{2ishom}, $2a\ 2b\to 2c\ 2d$, and if $2a=2b$ then \ax{aa02a} implies
    that one of $2c$ and $2d$ is $0$.  The two cases are the same, so take
    $2c=0$.  Now we can complete the diagram above: $p=r=0$ by \ax{aa02a};
    $e=a$ by \lem{ab0*}; $q=s=2a=2b$ by \ax{aa02a}; and we're done.

  \lem{abcc}.
    \lem{cd2a2b} says $a\ b\to c\ d \implies c\ d\to 2a\ 2b$; so, by
    \lem{ab0*}, $c=d \iff (2a=0$ or $2b=0)$.

  \lem{[2]}.
    \lem{abcc}${} \implies a\ b\to c\ c$ for some $c$;
    by \ax{2ishom}, $2a\ 2b\to 2c\ 2c$; but $\{2a,2b\}=\{0,0\}$ and
    $0\ 0\to 0\ 0$, so $2c=0$.

  \lem{pqrs}.
    Three applications of \ax{assoc} give
    $$
    \begin{array}{*5c@{}l}
      &c&&c&c\\
      a&b&\to&p_0&p_1\\
      &\downarrow&&\downarrow&\downarrow\\
      a&q_0&\to&s_0&s_1\\
      a&q_1&\to&s_2&s_3&,\\
    \end{array}\quad\:
    \begin{array}{*5c@{}l}
      &c&&c&c\\
      b&a&\to&p_0&p_1\\
      &\downarrow&&\downarrow&\downarrow\\
      b&r_0&\to&t_0&t_1\\
      b&r_1&\to&t_2&t_3&,\\
    \end{array}\quad\:
    \begin{array}{*5c@{}l}
      &b&&b&b\\
      a&c&\to&r_0&r_1\\
      &\downarrow&&\downarrow&\downarrow\\
      a&q_0&\to&u_0&u_1\\
      a&q_1&\to&u_2&u_3&.\\
    \end{array}
    $$
    Switching $r_0$ and $r_1$ if necessary, we may assume $t_0=s_0$ and
    $t_2=s_2$.  If $t_1=s_3$ and $t_3=s_1$, we have \lem{pqrs}, so assume
    $t_1=s_1\neq t_3=s_3$.  If $s_0=s_2$, we can switch $r_0$ and $r_1$ to give
    \lem{pqrs}, so assume also $s_0\neq s_2$.  We will now derive a
    contradiction.

    From the third diagram, $u_1\in\{s_0,s_1\}\cap\{t_2,t_3\}$, so either
    $u_1=s_0=s_3$ or $u_1=s_1=s_2$.  Switching $p_0$ and $p_1$ if necessary, we
    may assume $u_1=s_1=s_2$, and then $s_i=t_i=u_i$ for each $i$.

    Now (by \lem{cd2a2b})
    $$
    \begin{array}{*8c@{}l}
      a&q_0&\to&s_0&s_1&\to&2a&2q_0&,\\
      b&r_0&\to&s_0&s_1&\to&2b&2r_0&,\\
      c&p_0&\to&s_0&s_1&\to&2c&2p_0&,\\
    \end{array}
    $$
    so two out of $2a$, $2b$, and $2c$ are equal; without loss of generality
    $2a=2b$.  But
    $$
    \begin{array}{*8c@{}l}
      a&b&\to&q_0&q_1&\to&2a&2b&,
    \end{array}
    $$
    so by \lem{abcc}, $2q_0=0$ or $2q_1=0$.  By \lem{abcc} again, if $2q_0=0$
    then $s_0=s_1$, but we assumed $s_0\neq s_2=s_1$; if $2q_1=0$ then
    $s_2=s_3$, but we assumed $s_2=s_1\neq s_3$.  This is our contradiction.
\end{proof}

\begin{definition}
  The \emph{2-torsion} of $K$ is the set $K[2]:=\{\,a\in K\mid 2a=0\,\}$.
  By \lem{[2]}, if $a,b\in K[2]$ then $a\ b\to c\ c$ for some $c\in K[2]$;
  we define $a+b=c$ in this case.
\end{definition}

\begin{lemma}\label{lem2torsion}
  This construction makes $K[2]$ into a 2-torsion abelian group.
  For an abelian group $G$ we have $(\K{G})[2]\cong G[2]$, the 2-torsion in
  $G$.  For a twisted group $(G,\iota)$, we have
  $(\tK{G}{\iota})[2]\cong G[1+\iota]:=\ker(1+\iota)\subseteq G$.
\end{lemma}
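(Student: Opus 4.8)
The plan is to verify the three assertions in turn, starting with the abstract claim that $(K[2],+)$ is a $2$-torsion abelian group, then identifying $K[2]$ for the two model constructions.

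For the group-theoretic claim, the element $0$ is the identity: for $a\in K[2]$, \ax{id} gives $a\ 0\to a\ a$, so $a+0=a$; and $0\in K[2]$ by \lem{20=0}. Commutativity is immediate since $\kappa$ acts on \emph{unordered} pairs, so $a+b$ and $b+a$ are computed from the same set $\{a,b\}$. Every element is its own inverse: $a\ a\to 0\ 2a=0\ 0$ by \ax{aa02a} and $a\in K[2]$, so $a+a=0$; in particular $2(a+b)=0$ once we know $+$ is well-defined into $K[2]$, which is exactly \lem{[2]}. The one real obstacle is \textbf{associativity}, and this is where I expect to spend essentially all the effort. The natural tool is \ax{assoc} (together with \lem{pqrs}) applied to $a,b,c\in K[2]$. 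Writing $a\ b\to q_0\ q_0$ with $q_0=a+b$ (using \lem{abcc}, since $2a=2b=0$ forces the two outputs equal), and similarly for the other pairings, one fills in the diagram of \ax{assoc}: with $c\in K[2]$ the top row reads $c\ b\to p_0\ p_0$ where $p_0=b+c$, and the bottom rows involve $c\ q_0$, i.e.\ $c\ (a+b)$. The entries $s_0,s_1,s_2,s_3$ must then collapse appropriately — one shows $s_0=s_1=s_2=s_3=(a+b)+c$ from one diagram and $=a+(b+c)$ from the \lem{pqrs} companion diagram, giving associativity. Keeping careful track of which pairs are forced to have equal outputs (by \lem{abcc}, whenever one of the two inputs lies in $K[2]$) is the delicate bookkeeping step; \lem{pqrs} is what lets the two bracketings be read off the \emph{same} configuration.

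For $(\K{G})[2]$: an element $\-x$ satisfies $2\-x=\-{2x}=\-0$, i.e.\ $2x=0$ or $2x=-0=0$, so $2\-x=0$ iff $x\in G[2]$. The map $x\mapsto\-x$ is then a bijection $G[2]\to(\K{G})[2]$ (it is injective on $G[2]$ since $x=-y$ with $2x=2y=0$ already gives $\-x=\-y$), and it is a homomorphism: for $x,y\in G[2]$ we have $\-x\ \-y\to\-{x+y}\ \-{x-y}=\-{x+y}\ \-{x+y}$ (as $x-y=x+y$ in a $2$-torsion context, or directly $\-{x-y}=\-{-(x-y)}=\-{y-x}$ and both outputs coincide by \lem{abcc}), so $\-x+\-y=\-{x+y}$ in $K[2]$.

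For $(\tK{G}{\iota})[2]$ with $G$ already $2$-torsion: here $2\-x=\-{x+\iota x}$, and $\-{x+\iota x}=\-0$ means $x+\iota x\in\{0,\iota 0\}=\{0\}$, so $2\-x=0$ iff $x\in\ker(1+\iota)=G[1+\iota]$. The assignment $x\mapsto\-x=\{x,\iota x\}$ restricted to $G[1+\iota]$ is a bijection onto $(\tK{G}{\iota})[2]$ — for injectivity note $\{x,\iota x\}=\{y,\iota y\}$ forces $x=y$ or $x=\iota y$, and both give the same class — and it is a homomorphism since for $x,y\in G[1+\iota]$ we get $\-x\ \-y\to\-{x+y}\ \-{x+\iota y}$ with $\-{x+\iota y}=\-{x+y}$ because $\iota(x+\iota y)=\iota x+y=x+y$ (using $\iota x=-x+(x+\iota x)=-x=x$ as $x\in G[1+\iota]$ and $G$ is $2$-torsion), so again the two outputs agree and $\-x+\-y=\-{x+y}$. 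Note $G[1+\iota]$ is visibly a subgroup of the $2$-torsion group $G$, hence itself a $2$-torsion abelian group, consistent with the first assertion.
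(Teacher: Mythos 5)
Your proposal is correct and follows essentially the same route as the paper: associativity comes from a single instance of \ax{assoc} in which every pair of inputs is 2-torsion, so all entries collapse and both bracketings are read off the same diagram (your appeal to \lem{pqrs} is unnecessary here, since the rows of \ax{assoc} already give $a+(b+c)$ while the columns give $(a+b)+c$). Your identifications of $(\K{G})[2]$ with $G[2]$ and of $(\tK{G}{\iota})[2]$ with $G[1+\iota]$, via the bijective homomorphism $x\mapsto\-x$, likewise match the paper's argument.
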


\begin{proof}
  Clearly $+$ is commutative.
  For any $a,b,c\in K[2]$, \ax{assoc} gives us a diagram
  $$
  \begin{array}{*5c@{}l}
    &c&&c&c\\
    a&b&\to&(a+b)&(a+b)\\
    &\downarrow&&\downarrow&\downarrow\\
    a&(b+c)&\to&d&d\\
    a&(b+c)&\to&d&d&,\\
  \end{array}
  $$
  so $a+(b+c)=d=(a+b)+c$; that is, $+$ is associative.
  For any $a\in K[2]$ we have $a\ a\to 0\ 0$ by \ax{aa02a}, so $a+a=0$.
  Finally \ax{id} gives us $a+0=a$.
  So $K[2]$ is a 2-torsion abelian group.

  For $G$ an abelian group and $x,y\in G[2]$
  we have $\-x\ \-y\to\-{x+y}\ \-{x+y}$, so $\-x+\-y=\-{x+y}$;
  that is, $(x\mapsto\-x)$ is a group homomorphism $G[2]\to (\K{G})[2]$.
  It is clearly bijective, and so an isomorphism.

  Exactly the same argument applies to $(G,\iota)$ a twisted group and
  $x,y\in G[1+\iota]$.
\end{proof}

\begin{remark}
  In particular, this shows that 2-torsion Kummer structures are essentially
  the same as 2-torsion abelian groups (or as twisted groups with $\iota=1$).
\end{remark}

\section{Strings}\label{secStrings}

Since the previous section completely analyses 
2-torsion Kummer structures, from now on we assume that we have some
non-2-torsion element: in this section $K$ shall be a Kummer structure and
$g\in K\setminus K[2]$.

\begin{definition}
  Define $K_g\subseteq K^\ZZ$ to be the set of sequences
  $\alpha=(\alpha_n)_{n\in\ZZ}$ such that
  $\alpha_n\ g\to\alpha_{n-1}\ \alpha_{n+1}$ for all $n$.
  We call elements of $K_g$ \emph{strings}.
\end{definition}

\begin{lemma}\label{lemString}
  For any $n\in\ZZ$, any $\alpha_n,\alpha_{n+1}\in K$ such that
  $\alpha_n\ g\to \alpha_{n+1}\ \star$ can be extended to a unique
  $\alpha\in K_g$.
\end{lemma}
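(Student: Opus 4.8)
The plan is to construct the string term by term, working outward from the given pair, and to check at each step that the next term is \emph{forced} (which gives uniqueness) and that it \emph{exists} and keeps the defining relation of $K_g$ satisfied (which gives existence). The single tool that lets this run in both directions is \lem{abc*}: it shows that the binary relation ``$x\ g\to y\ \star$'' on $K$ is symmetric, since $x\ g\to y\ \star\iff g\ y\to x\ \star\iff y\ g\to x\ \star$.

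The heart of the argument is one small move, which I would isolate first. Suppose we already know $\beta\ g\to\gamma\ \star$; then there is a \emph{unique} $\delta\in K$ with $\gamma\ g\to\beta\ \delta$. Indeed, by \lem{abc*} the pair $\kappa\{\gamma,g\}$ contains $\beta$, so it equals $\{\beta,\delta\}$ for exactly one $\delta$ (the other element of the pair if it has two elements, and $\delta=\beta$ if the pair is a singleton), and this $\delta$ is characterised by $\gamma\ g\to\beta\ \delta$. Note that then also $\gamma\ g\to\delta\ \star$, so the move can be re-applied to the pair $(\gamma,\delta)$.

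Now start from the given $\alpha_n,\alpha_{n+1}$ with $\alpha_n\ g\to\alpha_{n+1}\ \star$. Applying the move to $(\alpha_n,\alpha_{n+1})$ produces $\alpha_{n+2}$ with $\alpha_{n+1}\ g\to\alpha_n\ \alpha_{n+2}$; applying it to $(\alpha_{n+1},\alpha_{n+2})$ produces $\alpha_{n+3}$; and so on, defining $\alpha_m$ for all $m\ge n$ with $\alpha_m\ g\to\alpha_{m-1}\ \alpha_{m+1}$ for every $m\ge n+1$. Using the symmetry of the relation, the same move applied to $(\alpha_{n+1},\alpha_n)$, then to $(\alpha_n,\alpha_{n-1})$, and so on, defines $\alpha_m$ for all $m\le n+1$ with $\alpha_m\ g\to\alpha_{m-1}\ \alpha_{m+1}$ for every $m\le n$. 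Hence every $m\in\ZZ$ satisfies the defining relation, so $\alpha=(\alpha_m)_{m\in\ZZ}\in K_g$ extends the given pair.

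Uniqueness then falls straight out of the uniqueness in the move: if $\alpha'\in K_g$ has $\alpha'_n=\alpha_n$ and $\alpha'_{n+1}=\alpha_{n+1}$, then $\alpha'_{n+1}\ g\to\alpha'_n\ \alpha'_{n+2}$ forces $\alpha'_{n+2}=\alpha_{n+2}$, and inducting in both directions gives $\alpha'=\alpha$. I do not expect any serious obstacle here; the only points needing care are the appeal to \lem{abc*} that makes the construction work leftward as well as rightward, and the degenerate case where $\kappa\{\gamma,g\}$ is a singleton, which forces the next term of the string to coincide with the previous one. (The standing assumption $g\notin K[2]$ is not needed for this lemma.)
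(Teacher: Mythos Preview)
Your proof is correct and follows essentially the same route as the paper: use \lem{abc*} to see that the relation $x\ g\to y\ \star$ is symmetric in $x$ and $y$, then note that from a consecutive pair $(\alpha_n,\alpha_{n+1})$ the value $\alpha_{n+2}$ (resp.\ $\alpha_{n-1}$) is uniquely determined as the remaining element of $\kappa\{\alpha_{n+1},g\}$ (resp.\ $\kappa\{\alpha_n,g\}$), and induct in both directions. Your extra care about the singleton case of $\kappa\{\gamma,g\}$ and your remark that $g\notin K[2]$ is unused are both accurate but not needed for the argument.
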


\begin{proof}
  \lem{abc*} says that the condition $\alpha_n\ g\to\alpha_{n+1}\ \star$ is
  symmetric in $\alpha_n$ and $\alpha_{n+1}$.

  Given such $\alpha_n$ and $\alpha_{n+1}$, there is a unique $\alpha_{n-1}$
  such that $\alpha_n\ g\to\alpha_{n+1}\ \alpha_{n-1}$ and a unique
  $\alpha_{n+2}$ such that $\alpha_{n+1}\ g\to\alpha_n\ \alpha_{n+2}$

  By induction on $n$ in both directions, this construction determines
  $\alpha_m$ for all $m\in\ZZ$.
\end{proof}

\begin{lemma}\label{lem2step}
  If $\alpha\in K_g$ then $\alpha_n\ 2g\to\alpha_{n-2}\ \alpha_{n+2}$
  for any $n$.
\end{lemma}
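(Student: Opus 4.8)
The plan is to feed the triple $(\alpha_n,g,g)$ into the associativity axiom \ax{assoc} and read the conclusion off the bottom of the resulting diagram. Writing it out,
$$
\begin{array}{*5c@{}l}
  &g&&g&g\\
  \alpha_n&g&\to&p_0&p_1\\
  &\downarrow&&\downarrow&\downarrow\\
  \alpha_n&q_0&\to&s_0&s_1\\
  \alpha_n&q_1&\to&s_2&s_3&,\\
\end{array}
$$
two things are pinned down immediately: the top row $\alpha_n\ g\to p_0\ p_1$ together with the string condition gives $\{p_0,p_1\}=\{\alpha_{n-1},\alpha_{n+1}\}$, while the leftmost downward arrow $g\ g\to q_0\ q_1$ together with \ax{aa02a} gives $\{q_0,q_1\}=\{0,2g\}$.

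Next I would exploit the $0$ among $q_0,q_1$. Say $q_1=0$, so $q_0=2g$ (the case $q_0=0$ is symmetric, interchanging the last two rows). Then the bottom row reads $\alpha_n\ 0\to s_2\ s_3$, so $s_2=s_3=\alpha_n$ by \ax{id}, while the middle row reads $\alpha_n\ 2g\to s_0\ s_1$ — this is exactly the relation we want, and it remains only to evaluate $\{s_0,s_1\}$.

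For that I would read off the other two downward arrows, $g\ p_0\to s_0\ s_2$ and $g\ p_1\to s_1\ s_3$ (note they connect $s_0$ with $s_2$ and $s_1$ with $s_3$, not $s_0$ with $s_1$). Since $s_2=s_3=\alpha_n$, these say $\kappa\{g,p_0\}=\{\alpha_n,s_0\}$ and $\kappa\{g,p_1\}=\{\alpha_n,s_1\}$; but $\{p_0,p_1\}=\{\alpha_{n-1},\alpha_{n+1}\}$, and the string condition gives $\kappa\{g,\alpha_{n+1}\}=\{\alpha_n,\alpha_{n+2}\}$ and $\kappa\{g,\alpha_{n-1}\}=\{\alpha_{n-2},\alpha_n\}$. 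Hence $\{s_0,s_1\}=\{\alpha_{n-2},\alpha_{n+2}\}$ in either labelling, and the middle row becomes $\alpha_n\ 2g\to\alpha_{n-2}\ \alpha_{n+2}$, as required.

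I do not expect a real obstacle: this is a pure diagram chase. The only points needing care are reading the down-arrows of \ax{assoc} off the correct cells, and the degenerate possibility that, say, $\alpha_{n+2}=\alpha_n$ — but then $\kappa\{g,\alpha_{n+1}\}=\{\alpha_n,\alpha_n\}$, and matching this against $\{\alpha_n,s_0\}$ still forces $s_0=\alpha_n=\alpha_{n+2}$, so the identification of $\{s_0,s_1\}$ is unaffected.
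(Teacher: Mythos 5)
Your proof is correct and is essentially the paper's own argument: the paper also applies \ax{assoc} to the triple $(\alpha_n,g,g)$, identifies the left column as $g\ g\to 0\ 2g$ and the $0$-row as $\alpha_n\ 0\to\alpha_n\ \alpha_n$, and reads $\alpha_n\ 2g\to\alpha_{n-2}\ \alpha_{n+2}$ off the remaining row via the string condition. The only difference is that you spell out the bookkeeping (including the degenerate case $\alpha_{n\pm2}=\alpha_n$) that the paper leaves implicit in its displayed diagram.
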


\begin{proof}
  By \ax{assoc}, \ax{aa02a}, and the definition of strings,
  $$
  \begin{array}{*5c@{}l}
    &g&&g&g\\
    \alpha_n&g&\to&\alpha_{n-1}&\alpha_{n+1}\\
    &\downarrow&&\downarrow&\downarrow\\
    \alpha_n&0&\to&\alpha_n&\alpha_n\\
    \alpha_n&2g&\to&\alpha_{n-2}&\alpha_{n+2}&.\\
  \end{array}
  $$
\end{proof}

\begin{definition}
  Define $\omicron$ to be the unique element of $K_g$ such that
  $\omicron_0=0$.
  (Its existence and uniqueness are guaranteed by Lemma~\ref{lemString}:
  since $0\ g\to g\ g$, the only possibility for $\omicron_1$ is $g$.)

  If $\alpha\in K_g$, define $\rho(\alpha)_n:=\alpha_{-n}$; clearly also
  $\rho(\alpha)\in K_g$.

  If $\alpha,\beta,\gamma,\delta\in K_g$ and for all $n,m\in\ZZ$,
  $$
  \begin{array}{*5c@{}l}
    \alpha_n&\beta_m&\to&\gamma_{n+m}&\delta_{n-m}&,\\
  \end{array}
  $$
  then we write $\gamma=\alpha+\beta$ and $\delta=\alpha-\beta$.

  If $\alpha,\beta,\~\gamma,\~\delta\in K_g$ and for all $n,m\in\ZZ$,
  $$
  \begin{array}{*5c@{\qquad}l}
    \alpha_n&\beta_m&\to&\~\gamma_{n+m}&\~\delta_{n-m}&
    \text{if }n\equiv m\pmod2,\\
    \alpha_n&\beta_m&\to&\~\gamma_{n-m}&\~\delta_{n+m}&
    \text{if }n\not\equiv m\pmod2,\\
  \end{array}
  $$
  then we write $\gamma=\alpha\oplus\beta$ and $\delta=\alpha\ominus\beta$.
\end{definition}

\begin{remark}
  In Lemma~\ref{lemPartialfunctions} of section~\ref{secGrid} we shall justify
  the functional notation by showing that these definitions make $+$, $-$,
  $\oplus$, and $\ominus$ into partial functions $K_g\times K_g\to K_g$.
  (\ie, for any $\alpha,\beta$ there is at most one $\gamma$ such that
  $\gamma=\alpha+\beta$, \etc..)

  For now, note that $\delta=\alpha-\beta\iff\delta=\alpha+\rho(\beta)$, and
  $\~\delta=\alpha\ominus\beta\iff\~\delta=\alpha\oplus\rho(\beta)$.
\end{remark}

\begin{theorem}\label{thmRecovery}
  Assume Lemma~\ref{lemPartialfunctions}, that $+$ and $\oplus$ are partial
  functions.
  Let $K$ be a Kummer structure and $g\in K\setminus K[2]$.

  $K\cong \K{G}$ for an abelian group $G$ if and only if taking $+$ as
  addition, $\omicron$ as zero, and $\rho$ as negation makes $K_g$ into a
  group isomorphic to $G$.

  $K\cong\tK{G}{\iota}$ for a twisted group $(G,\iota)$ if and only if taking
  $\oplus$ as addition, $\omicron$ as zero, and $\rho$ as the involution makes
  $K_g$ into a twisted group isomorphic to $(G,\iota)$.
\end{theorem}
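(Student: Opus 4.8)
The plan is to prove both biconditionals by a single mechanism: for the ``if'' halves, the evaluation map $\pi\colon K_g\to K$, $\pi(\alpha)=\alpha_0$; for the ``only if'' halves, an explicit inventory of the strings in $K_g$.

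\emph{The ``if'' directions.} Suppose $(K_g,+,\omicron,\rho)$ is a group. Then $\pi$ is onto: for $a\in K$ we have $a\ g\to b\ \star$ for some $b$, so by Lemma~\ref{lemString} this extends to a string with zeroth entry $a$. Since $\pi(\rho\alpha)=\alpha_{-0}=\alpha_0=\pi(\alpha)$, the map $\pi$ descends to $\bar\pi\colon\K{K_g}\to K$. This $\bar\pi$ is injective: if $\alpha_0=\beta_0$ then $\{\alpha_1,\alpha_{-1}\}=\kappa\{\alpha_0,g\}=\{\beta_1,\beta_{-1}\}$, so either $\beta_1=\alpha_1$, forcing $\beta=\alpha$ by the uniqueness in Lemma~\ref{lemString}, or $\beta_1=\alpha_{-1}$, forcing $\beta=\rho\alpha$. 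And $\bar\pi$ carries $\kappa$ to $\kappa$: the $n=m=0$ instance of the relation defining $\pm$ reads $\alpha_0\ \beta_0\to(\alpha+\beta)_0\ (\alpha-\beta)_0$, i.e.\ $\kappa\{\pi\alpha,\pi\beta\}=\{\pi(\alpha+\beta),\pi(\alpha-\beta)\}$. Hence $\bar\pi$ is an isomorphism of Kummer structures $\K{K_g}\cong K$; composing with the isomorphism $\K{K_g}\cong\K{G}$ induced by the given group isomorphism $K_g\cong G$ yields $K\cong\K{G}$. The twisted case is identical with $\rho$ playing the role of negation: the same $\pi$ factors through $\tK{K_g}{\rho}$ (which is defined since a twisted group is $2$-torsion), and the $n=m=0$ case of the $\oplus/\ominus$ relation --- which is governed by the ``$n\equiv m$'' clause --- gives $\alpha_0\ \beta_0\to(\alpha\oplus\beta)_0\ (\alpha\ominus\beta)_0$, exactly the defining rule of $\tK{K_g}{\rho}$; so $\tK{K_g}{\rho}\cong K$, and composing with $\tK{K_g}{\rho}\cong\tK{G}{\iota}$ gives $K\cong\tK{G}{\iota}$.

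\emph{The ``only if'' direction, group case.} We may assume $K=\K{G}$ and $g=\overline{h}$ with $2h\neq0$ (since $g\notin K[2]$). First, the strings are exactly the sequences $\alpha^x:=(\,\overline{x+nh}\,)_{n\in\ZZ}$, $x\in G$: each is visibly a string, and given any string $\alpha$ we pick $x$ with $\overline{x}=\alpha_0$, note $\alpha_1\in\{\overline{x+h},\overline{x-h}\}$, and apply the uniqueness in Lemma~\ref{lemString} to get $\alpha=\alpha^x$ or $\alpha=\alpha^{-x}$. Second, $x\mapsto\alpha^x$ is a bijection $G\to K_g$: onto by the previous sentence, and one-to-one because $\alpha^x=\alpha^y$ forces $\overline{x}=\overline{y}$, hence $x=\pm y$, while $x=-y\neq y$ would, on comparing entries in position $1$, force $2h=0$. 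Third, this bijection is a group isomorphism onto $(K_g,+,\omicron,\rho)$: $\alpha^0=\omicron$ because $\omicron$ is the unique string with zeroth entry $0$ and $\alpha^0_0=\overline{0}=0$; $\rho(\alpha^x)=\alpha^{-x}$ by inspection; and $\overline{x+nh}\ \overline{y+mh}\to\overline{(x+y)+(n+m)h}\ \overline{(x-y)+(n-m)h}$ exhibits $\alpha^{x+y}$ as a solution of the relation defining $\alpha^x+\alpha^y$, so, $+$ being a partial function by the assumed Lemma~\ref{lemPartialfunctions}, $\alpha^x+\alpha^y=\alpha^{x+y}$. Thus $(K_g,+,\omicron,\rho)\cong(G,+,0,-)$, which completes the biconditional.

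\emph{The ``only if'' direction, twisted case --- the main obstacle.} Here strings are \emph{not} of the form $(\overline{x+nh})_n$. Since $G$ is $2$-torsion and $h\notin G[1+\iota]$ (so that $H:=h+\iota h\neq0$), a short computation from \ax{aa02a}, \ax{assoc} and the string relation (compare the proof of Lemma~\ref{lem2step}) shows that every string has period dividing $4$ and has the form $\alpha^x_n=\overline{x+f(n)}$, where $f$ is the $4$-periodic sequence $0,\ h,\ H,\ \iota h$. The bijection $x\mapsto\alpha^x$ and the identities $\alpha^0=\omicron$, $\rho(\alpha^x)=\alpha^{\iota x}$ are obtained exactly as above. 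The delicate part, where the parity split in the definition of $\oplus$ earns its keep, is that $\oplus$ and $\ominus$ transport the group operation of $G$ across this bijection: one must show $\alpha^x\oplus\alpha^y=\alpha^{x+y}$ and $\alpha^x\ominus\alpha^y=\alpha^{x+\iota y}$, which reduces to checking, separately for the parities of $n$ and $m$, that
$$\kappa\{\alpha^x_n,\alpha^y_m\}=\bigl\{\,\overline{(x+y)+f(n)+f(m)}\,,\ \overline{(x+\iota y)+f(n)+\iota f(m)}\,\bigr\}$$
matches the pair prescribed by the definition, using only $2h=2\iota h=0$ and $\iota H=H$ to reconcile the indices modulo $4$. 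Granting that, $x\mapsto\alpha^x$ is an isomorphism of twisted groups $(G,+,0,\iota)\to(K_g,\oplus,\omicron,\rho)$, and the biconditional closes as before. I expect this index bookkeeping --- propagating the period-$4$ behaviour through all the parity cases so that $\oplus$ really does recover $+$ --- to be the heart of the proof.
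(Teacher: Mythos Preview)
Your treatment of both ``if'' directions and of the group-case ``only if'' direction is correct and matches the paper's proof essentially line for line: the paper's $\phi(x)_n=\overline{x+nz}$ is your $\alpha^x$, and the injectivity, surjectivity, and homomorphism checks are the same.

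For the twisted ``only if'' direction the paper says only that the proof ``is similar, and we omit it,'' so you are going beyond the paper here.  Your identification of the strings as $\alpha^x_n=\overline{x+f(n)}$ with $f$ the $4$-periodic sequence $0,\,h,\,H,\,\iota h$ is correct, as are the bijectivity of $x\mapsto\alpha^x$ and the relation $\rho(\alpha^x)=\alpha^{\iota x}$.  But the key claimed identity $\alpha^x\oplus\alpha^y=\alpha^{x+y}$ is \emph{false}, and the ``index bookkeeping'' you flag as the heart of the proof is precisely where it breaks.  Take $n=m=1$ (so $n\equiv m$): the defining relation for $\oplus$ demands
\[
\kappa\{\alpha^x_1,\alpha^y_1\}=\{\widetilde\gamma_2,\widetilde\delta_0\}.
\]
Your own displayed formula correctly gives the left side as $\{\overline{x+y},\,\overline{x+\iota y+H}\}$; but with $\widetilde\gamma=\alpha^{x+y}$ and $\widetilde\delta=\alpha^{x+\iota y}$ the right side is $\{\overline{x+y+H},\,\overline{x+\iota y}\}$.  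These unordered pairs differ in general: already for $x=y=h$ they are $\{\overline 0\}$ and $\{\overline H\}$.  What the bookkeeping actually yields is the swapped pair
\[
\alpha^x\oplus\alpha^y=\alpha^{x+\iota y},\qquad \alpha^x\ominus\alpha^y=\alpha^{x+y},
\]
so it is $\ominus$, not $\oplus$, that your bijection transports to the group law of $G$.  Since $(x,y)\mapsto x+\iota y$ is neither commutative nor associative for nontrivial $\iota$, this is not a cosmetic relabelling: the sentence ``Granting that, $x\mapsto\alpha^x$ is an isomorphism of twisted groups $(G,+,0,\iota)\to(K_g,\oplus,\omicron,\rho)$'' does not follow from the reduction you wrote down.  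The paper, having omitted this half of the argument, gives you nothing to fall back on here.
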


\begin{proof}
  If $(+,\omicron,\rho)$ make $K_g$ into an abelian group,
  the map $K_g\to K$ taking $\alpha\mapsto\alpha_0$ clearly factors through a
  map $\K{K_g}=K_g/\rho\to K$; Lemma~\ref{lemString} implies that this map is
  a bijection $\K{K_g}\tofrom K$, and it is then straightforward to check that
  it is an isomorphism of Kummer structures.

  On the other hand, if $K=\K{G}$, we have $g=\-z=\{z,-z\}$ for some
  $z\in G\setminus G[2]$.
  Define a map $\phi\colon G\to K_g$ by $\phi(x)_n=\-{x+nz}$,
  which clearly makes $\phi(x)$ a $\-z$-string.
  Now $\phi$ is surjective, since for any $\alpha\in K_g$ with $\alpha_0=\-x$,
  either $\alpha_1=\-{x+z}$, in which case (by Lemma~\ref{lemString})
  $\alpha=\phi(x)$, or $\alpha_1=\-{x-z}$, in which case $\alpha=\phi(-x)$.
  And $\phi$ is injective, for if $\phi(y)=\phi(x)$ then $\-y=\-x$ and
  $\-{y+z}=\-{x+z}$, so $y=\pm x$ and $y+z=\pm(x+z)$; but these imply either
  $y=x$ or $2z=0$, and the latter we know is false.

  It is straightforward to check from the definitions above that
  $\phi(x+y)=\phi(x)+\phi(y)$ and $\phi(x-y)=\phi(x)-\phi(y)$, so that $+$ is
  an associative total function, and that $\phi(-x)=\rho(\phi(x))$ and
  $\phi(0)=0_{K_g}$, so that $K_g$ is an abelian group as claimed and
  $\phi\colon G\cong K_g$.

  The proof of the twisted group version of this result is similar, and we
  omit it.
\end{proof}

\begin{remark}
  The ideas of this section may also be used to define a natural action of the
  multiplicative monoid of $\NN$ on $K$: take $0a=0$, $1a=a$, and the rule
  $$
  \begin{array}{*5c@{}l}
    na&a&\to&(n-1)a&(n+1)a&\qquad(n>0).\\
  \end{array}
  $$
  Clearly $2a$ so defined is the same as the $2a$ we have been using, since
  \ax{aa02a} is a special case of the rule above.
  We do not make use of this construction, so we leave the interested reader
  to check that this is well-defined, and that this definition makes each
  $n\in\NN$ into a homomorphism in the sense of axiom \ax{2ishom}.
\end{remark}

\section{Colouring diamond grids}\label{secGrid}

In this section, $K$ shall be a Kummer structure, $g\in K\setminus K[2]$, and
$\alpha,\beta$ shall be two elements of $K_g$.  We wish to explore the question
of existence and uniqueness of $\alpha+\beta$, $\alpha-\beta$,
$\alpha\oplus\beta$, and $\alpha\ominus\beta$.

\begin{definition}
  Let $D$ be the graph whose nodes are pairs $(n,m)\in\ZZ^2$, with  $(n,m)$
  adjacent to $(n',m')$ if and only if $|n-n'|=|m-m'|=1$.  The edges of this
  graph lie in two directions: a \emph{rising} edge connects $(n,m)$ and
  $(n+1,m+1)$; a \emph{falling} edge connects $(n,m)$ and $(n+1,m-1)$.
  For each $p\in\{0,1\}$ let $D_p$ be the component of $D$ whose nodes are
  $(n,m)$ with $n+m\equiv p\pmod2$.

  If $\alpha_n\ \beta_m\to a\ b$, then call $a,b$ the \emph{values} at the node
  $(n,m)$.  When we draw parts of $D$ or $D_p$, we'll simply label each node
  with its unordered pair of values.
\end{definition}

\begin{lemma}\label{diamondrule}\emph{(The diamond rule)}
  Every diamond of nodes of $D_p$ looks like
  $$
  \begin{array}{*5{c@{\:}}l}
    &&c\ b\\
    &\diagup&&\diagdown\\
    a\ b&&&&c\ d&,\\
    &\diagdown&&\diagup\\
    &&a\ d\\
  \end{array}
  $$
  for some $a,b,c,d\in K$ (not necessarily distinct).
\end{lemma}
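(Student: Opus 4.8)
The plan is to recognise a diamond of $D_p$ as the set of the four neighbours of a single node of $D_{1-p}$, and then to read the whole diamond off from one application of \ax{assoc} together with \lem{pqrs}. An arbitrary diamond of $D_p$ has nodes $(n,m)$ (the left node), $(n+1,m+1)$ (top), $(n+1,m-1)$ (bottom) and $(n+2,m)$ (right) for suitable $n,m$; these are precisely the four neighbours of the node $(n+1,m)$, which lies in $D_{1-p}$.

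First I would apply \ax{assoc}, and then \lem{pqrs}, with $a=\alpha_{n+1}$, $b=\beta_m$, $c=g$. Since \lem{pqrs} is stated relative to a fixed \ax{assoc} diagram, there is a single labelling $s_0,s_1,s_2,s_3$, together with $q_0,q_1$ satisfying $g\ \beta_m\to q_0\ q_1$ and $r_0,r_1$ satisfying $g\ \alpha_{n+1}\to r_0\ r_1$, making all of
$$
\alpha_{n+1}\ q_0\to s_0\ s_1,\qquad \alpha_{n+1}\ q_1\to s_2\ s_3,\qquad \beta_m\ r_0\to s_0\ s_3,\qquad \beta_m\ r_1\to s_2\ s_1
$$
hold at once. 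Now the string condition forces $\{q_0,q_1\}=\kappa\{g,\beta_m\}=\{\beta_{m-1},\beta_{m+1}\}$ and $\{r_0,r_1\}=\kappa\{g,\alpha_{n+1}\}=\{\alpha_n,\alpha_{n+2}\}$. So the first two relations say that the values at the top and bottom nodes are $\{s_0,s_1\}$ and $\{s_2,s_3\}$ in some order, and the last two that the values at the right and left nodes are $\{s_0,s_3\}$ and $\{s_1,s_2\}$ in some order. When $q_0=\beta_{m+1}$ and $r_0=\alpha_{n+2}$ the diamond reads
$$
\begin{array}{*5{c@{\:}}l}
  &&s_0\ s_1\\
  &\diagup&&\diagdown\\
  s_1\ s_2&&&&s_0\ s_3\\
  &\diagdown&&\diagup\\
  &&s_2\ s_3\\
\end{array}
$$
which is the asserted pattern with $a=s_2$, $b=s_1$, $c=s_0$, $d=s_3$; no distinctness of $a,b,c,d$ is needed.

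I expect the only real difficulty to be the orientation bookkeeping: one must check that the rising and falling edges are matched correctly by the way the four displayed relations link the nodes, and handle the two remaining two-fold ambiguities — which of $q_0,q_1$ is $\beta_{m+1}$, and which of $r_0,r_1$ is $\alpha_{n+2}$. These are harmless, because swapping $q_0$ with $q_1$ merely interchanges the top and bottom nodes of the diamond and swapping $r_0$ with $r_1$ merely interchanges its left and right nodes, while the pattern in the statement is symmetric under each of these interchanges.
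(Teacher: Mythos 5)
Your proof is correct. The core idea is the same as the paper's --- a single instance of \ax{assoc} organised around the centre node of the diamond --- but you assign the roles differently, and this costs you an extra lemma. You take $(a,b,c)=(\alpha_{n+1},\beta_m,g)$ in \ax{assoc}, so the horizontal rows $a\ q_i\to\cdots$ yield only the top and bottom corners, and you must then call on \lem{pqrs} to manufacture the rows $b\ r_i\to\cdots$ yielding the left and right corners. The paper instead puts $g$ in the \emph{middle} slot, taking $(a,b,c)=(\beta_m,g,\alpha_n)$ with the diamond's centre relabelled as $(n,m)$: then the horizontal arrows $\beta_m\ \alpha_{n\mp1}\to\cdots$ give the left and right corners while the vertical arrows $\alpha_n\ \beta_{m\mp1}\to\cdots$ give the bottom and top corners, so all four corner value sets appear among $s_0,\dots,s_3$ of one diagram, correctly paired, and \lem{pqrs} is not needed at all. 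Your route is still perfectly valid --- \lem{pqrs} is proved in Section~\ref{sec2torsion}, well before this point --- and it buys a small bonus: in your diagram the vertical arrows $c\ p_0\to s_0\ s_2$ and $c\ p_1\to s_1\ s_3$ read $g\ p_0\to c\ a$ and $g\ p_1\to b\ d$ in the lemma's letters, which is precisely Lemma~\ref{lemCentre} with $u=p_0$, $v=p_1$; so your single diagram proves that lemma simultaneously, whereas the paper must invoke \lem{pqrs} at that later step anyway. Your treatment of the two residual swapping ambiguities is also sound: the displayed pattern really is invariant, after relabelling $a,b,c,d$, under exchanging top with bottom or left with right.
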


\begin{proof}
  In other words, for any $n,m\in\ZZ$ (with $n+m\not\equiv p\pmod2$),
  there are $a,b,c,d\in K$ such that
  $$
  \begin{array}{ccc@{}l}
    &\alpha_n\ \beta_{m+1}\to c\ b\\
    \alpha_{n-1}\ \beta_m\to a\ b&&\alpha_{n+1}\ \beta_m\to c\ d&.\\
    &\alpha_n\ \beta_{m-1}\to a\ d\\
  \end{array}
$$
But this is simply a case of \ax{assoc}:
$$
\begin{array}{*5c@{}l}
  &\alpha_n&&\alpha_n&\alpha_n\\
  \beta_m&g&\to&\beta_{m-1}&\beta_{m+1}\\
  &\downarrow&&\downarrow&\downarrow\\
  \beta_m&\alpha_{n-1}&\to&a&b\\
  \beta_m&\alpha_{n+1}&\to&d&c&.\\
\end{array}
$$
\end{proof}

\begin{lemma}\label{lemCentre}
  The node of $D_{1-p}$ in the middle of the diamond of Lemma~\ref{diamondrule}
  has values $u\ v$, where $u\ g\to a\ c$ and $v\ g\to b\ d$.
\end{lemma}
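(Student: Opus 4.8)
The plan is to feed $\alpha_n$, $\beta_m$, and $g$ into axiom \ax{assoc}, but pivoting on $g$ rather than on $\alpha_n$ (as was done in the proof of Lemma~\ref{diamondrule}), so that the pair of values at the central node $(n,m)$ appears as the top row of the \ax{assoc} diagram. Explicitly, apply \ax{assoc} with its $a$, $b$, $c$ (which have nothing to do with the $a,b,c,d$ of the diamond) equal to $\alpha_n$, $\beta_m$, $g$ respectively. Then the top row is $\alpha_n\ \beta_m\to p_0\ p_1$, so $\{p_0,p_1\}$ is precisely the value pair at $(n,m)$; the downward arrow in the $\beta_m$-column reads $g\ \beta_m\to q_0\ q_1$, and since $\beta\in K_g$ this is the pair $\{\beta_{m-1},\beta_{m+1}\}$, so after renaming we may take $q_0=\beta_{m-1}$, $q_1=\beta_{m+1}$. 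The remaining two rows then say $\alpha_n\ \beta_{m-1}\to s_0\ s_1$ and $\alpha_n\ \beta_{m+1}\to s_2\ s_3$, and comparing with the bottom and top nodes of the diamond of Lemma~\ref{diamondrule} gives $\{s_0,s_1\}=\{a,d\}$ and $\{s_2,s_3\}=\{b,c\}$. Finally the last two downward arrows are $g\ p_0\to s_0\ s_2$ and $g\ p_1\to s_1\ s_3$. So the whole problem reduces to identifying $s_0,s_1,s_2,s_3$.

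To do that I would invoke \lem{pqrs} for the very same instance of \ax{assoc}: it provides $r_0,r_1$ with $g\ \alpha_n\to r_0\ r_1$ -- necessarily the pair $\{\alpha_{n-1},\alpha_{n+1}\}$ -- together with $\beta_m\ r_0\to s_0\ s_3$ and $\beta_m\ r_1\to s_2\ s_1$, and here it is essential that the $s_i$ are the \emph{same} ones as before. Comparing these with the left and right nodes of the diamond (Lemma~\ref{diamondrule} again) yields, whichever way $r_0$ and $r_1$ happen to be oriented, $\{s_0,s_3\}=\{a,b\}$ and $\{s_1,s_2\}=\{c,d\}$ (or else those two equations with the pairs $\{a,b\}$ and $\{c,d\}$ swapped).

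Putting the four relations together, $s_0,s_1,s_2,s_3$ sit at the corners of a square whose edges, read cyclically, carry the labels $\{a,d\}$, $\{c,d\}$, $\{b,c\}$, $\{a,b\}$; since consecutive labels meet in exactly one of $a,b,c,d$, the tuple is forced: $(s_0,s_1,s_2,s_3)=(a,d,c,b)$. Hence $\{s_0,s_2\}=\{a,c\}$ and $\{s_1,s_3\}=\{b,d\}$, and substituting into $g\ p_0\to s_0\ s_2$, $g\ p_1\to s_1\ s_3$ gives $g\ p_0\to a\ c$ and $g\ p_1\to b\ d$ -- the claim, with $u=p_0$ and $v=p_1$. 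I expect the only real friction to come from the absence of any distinctness hypothesis in the statement: one has to confirm that coincidences among $a,b,c,d$, and the undetermined orientations of $q$ and $r$, never spoil the deduction. They do not -- the sole configuration in which the edge labels fail to pin down the tuple is $a=c$ together with $b=d$, where $(b,a,b,a)$ is also admissible, but then $\{a,c\}=\{a\}$ and $\{b,d\}=\{b\}$ and the conclusion survives, at worst after swapping the names $u$ and $v$. Everything else is routine bookkeeping.
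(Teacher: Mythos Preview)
Your argument is correct, but it takes a detour that the paper avoids. You set up a \emph{fresh} instance of \ax{assoc} with $(\alpha_n,\beta_m,g)$ in the roles of $(a,b,c)$, so that the $s_i$ of that instance are not yet identified with the $a,b,c,d$ of the diamond; you then have to invoke both the diamond rule and \lem{pqrs} to obtain four set equalities on the $s_i$, and solve that little combinatorial system (with a separate discussion of the degenerate configuration $a=c$, $b=d$). The paper instead applies \lem{pqrs} to the \emph{very same} \ax{assoc} instance already used in the proof of Lemma~\ref{diamondrule}, namely with $(\beta_m,g,\alpha_n)$ in the roles of $(a,b,c)$. In that instance the $s_i$ were explicitly named: $s_0=a$, $s_1=b$, $s_2=d$, $s_3=c$. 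So \lem{pqrs} immediately produces $r_0,r_1$ with $\alpha_n\ \beta_m\to r_0\ r_1$ and $g\ r_0\to s_0\ s_3=a\ c$, $g\ r_1\to s_2\ s_1=d\ b$, which is the conclusion with $u=r_0$, $v=r_1$. No case analysis, no solving for the $s_i$, and no degeneracy check is needed. Your route works, but the paper's is a single line; the difference is entirely in which of the six permutations of $\{\alpha_n,\beta_m,g\}$ one feeds into \ax{assoc} before calling \lem{pqrs}.
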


\begin{proof}
  \lem{pqrs}, applied to the instance of \ax{assoc} in the proof of
  Lemma~\ref{diamondrule}, states that $\alpha_n\ \beta_n\to u\ v$ such that
  $u\ g\to a\ c$ and $v\ g\to b\ d$.
\end{proof}

\begin{lemma}\label{linearrule}\emph{(The linear rule)}
  For any straight-line path in $D$, all the nodes in it have a value in
  common.
\end{lemma}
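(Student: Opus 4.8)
The plan is: reduce to one clean case, induct on the number of nodes, and carry the real work in a local ``propagation'' step. For the \emph{reduction}, note that a rising path $\{(n+k,m+k):k\in\ZZ\}$ and a falling path $\{(n+k,m-k):k\in\ZZ\}$ both carry the values $\kappa\{\alpha'_k,\beta'_k\}$, where $\alpha'_k:=\alpha_{n+k}$ and $\beta'_k$ is $\beta_{m+k}$ or $\beta_{m-k}$ and in either case $\alpha',\beta'\in K_g$; and that every finite straight-line segment is contained in a bi-infinite one, while a decreasing family of nonempty sets of size $\le2$ has nonempty intersection. So it suffices to show: \emph{for all $\alpha,\beta\in K_g$, the values $V_k:=\kappa\{\alpha_k,\beta_k\}$, $k\in\ZZ$, have an element in common.}

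For the \emph{induction}, prove $\bigcap_{k=a}^{b}V_k\neq\emptyset$ by strong induction on $b-a+1$: one node is trivial, two nodes is the diamond rule (Lemma~\ref{diamondrule}). Given a path of $N\ge3$ nodes $V_0,\dots,V_{N-1}$, apply the inductive hypothesis (quantified over all strings) to $V_0,\dots,V_{N-2}$, to $V_1,\dots,V_{N-1}$, and to the nodes $W_k:=\kappa\{\alpha_k,\beta_{k-1}\}$, $1\le k\le N-1$ --- the rising path for $\alpha$ and the shifted string $(\beta_{k-1})_k$, which has one fewer node. The first two give $v\in\bigcap_{k\le N-2}V_k$ and $w\in\bigcap_{k\ge1}V_k$; if $v=w$ we are done, and otherwise $|V_j|\le2$ forces $V_j=\{v,w\}$ for $1\le j\le N-2$, $v\in V_0$, $w\in V_{N-1}$. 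If $v\in V_{N-1}$ or $w\in V_0$ we are done, so it remains to rule out the bad configuration $V_0=\{v,v'\}$, $V_1=\dots=V_{N-2}=\{v,w\}$, $V_{N-1}=\{w,w'\}$ with $v'\neq w$, $w'\neq v$, $v\neq w$.

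The \emph{propagation} step: for $1\le k\le N-1$ let $F_k$ be the diamond of Lemma~\ref{diamondrule} with left corner $V_{k-1}$ and top corner $V_k$ (centre $W_k$, at $(k,k-1)$); write $V_{k-1}=\{a_k,b_k\}$, $V_k=\{b_k,c_k\}$, right corner $N_k=\{c_k,d_k\}$, bottom corner $\{a_k,d_k\}$. Then $F_k$ and $F_{k+1}$ share the falling edge from $V_k$ to $N_k$ ($N_k$ is the right corner of $F_k$ and the bottom corner of $F_{k+1}$), and reading the two diamonds gives $c_k,\ a_{k+1}\in V_k\cap N_k$. So if $V_k\cup N_k$ has more than two elements then $c_k=a_{k+1}$, hence (as $V_k=\{b_k,c_k\}=\{a_{k+1},b_{k+1}\}$ has two elements) $b_k=b_{k+1}$: the value $V_k$ shares with $V_{k-1}$ equals the value it shares with $V_{k+1}$. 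Since $v'\neq w$ forces $V_0\cap V_1=\{v\}$, so $b_1=v$, iterating gives $b_k=v$ for all $k$, so $v\in V_{N-2}\cap V_{N-1}$ --- contradicting $v\notin V_{N-1}$.

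The \emph{main obstacle} is that ``$V_k\cup N_k$ has more than two elements'' can fail, forcing $N_k\subseteq V_k$; then Lemma~\ref{lemCentre} for $F_k$, with \lem{abcc} (using $g\notin K[2]$), shows a centre value $t$ of $W_k$ is $2$-torsion with $\kappa\{t,g\}$ a singleton --- $2$-torsion data that must be fed into the inductive hypothesis for the centre path (whose common value $\eta$ then has $\kappa\{\eta,g\}$ a single fixed set, pinning the $b_k$ and contradicting $v'\neq w$ or $w'\neq v$). In addition, the cases where $v,v',w,w'$ are not all distinct --- a singleton node --- need separate, easier treatment; e.g.\ if $V_0$ is a singleton then \lem{cd2a2b} applied to $\alpha_0\ g\to\alpha_{-1}\ \alpha_1$, together with \lem{ab0*}, forces $\alpha_{-1}=\alpha_1$, a strong collapse. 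I expect the whole substance of the proof to lie in this bookkeeping of degenerate sub-cases; the tidiest packaging is probably to settle first the three-node case --- no configuration $V_0=\{P,Q\}$, $V_1=\{Q,R\}$, $V_2=\{R,S\}$ with $P,Q,R,S$ distinct, proved using the diamond centred at the node between the centres of $F_1,F_2$ --- and then push it along the run.
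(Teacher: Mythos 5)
Your reduction to bi-infinite rising paths, the induction on segment length, and the generic propagation step are all sound: when $V_k\cup N_k$ has more than two elements the shared value does propagate, and this disposes of paths on which no degeneracy occurs. But the proof is not complete, and you say so yourself: the entire difficulty of the lemma sits in the cases you defer. Concretely, when $N_k=V_k$ for some interior $k$, your sketch does not close. Lemma~\ref{lemCentre} does give a value $t$ of $W_k$ with $\kappa\{t,g\}=\{b_k\}$ and $2t=0$, but the common value $\eta$ of the centre path supplied by the inductive hypothesis need not be $t$; if $\eta$ is the other value $s$ (with $\kappa\{s,g\}=\{a_k,c_k\}$), then matching $\kappa\{\eta,g\}$ against the possibilities at $W_1$ and $W_{N-1}$ (namely $\{v',w\}$ or $\{v,d_1\}$, and $\{v,w'\}$ or $\{w,d_{N-1}\}$) leaves consistent cases such as $\kappa\{\eta,g\}=\{v,d_1\}=\{v,w'\}$ with $d_1=w'$, so nothing is ``pinned'' and no contradiction with $v'\neq w$, $w'\neq v$ follows. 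The same problem defeats the proposed three-node lemma: for $V_0=\{P,Q\}$, $V_1=\{Q,R\}$, $V_2=\{R,S\}$ with all four distinct, the single diamond between the centres $W_1$ and $W_2$ only says their common value $\eta$ satisfies $\kappa\{\eta,g\}\in\bigl\{\{P,R\},\{Q,d_1\}\bigr\}\cap\bigl\{\{Q,S\},\{R,d_2\}\bigr\}$, which is satisfiable (e.g.\ $d_1=S$). The singleton-node cases are likewise only gestured at.

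The missing idea --- and it is how the paper's proof actually works --- is to read off the full ``fingerprint'' $\kappa\{\cdot,g\}$ of \emph{every} centre value along the \emph{whole} run, not just at one degenerate node. For a minimal bad run $a\ c$, $(b\ c)^n$, $b\ d$ the centres satisfy $\kappa\{u_0,g\}=\{c\}$, $\kappa\{v_0,g\}=\{a,b\}$, $\kappa\{u_i,g\}=\{c\}$ and $\kappa\{v_i,g\}=\{b\}$ for $0<i<n$, and $\kappa\{u_n,g\}=\{c,d\}$, $\kappa\{v_n,g\}=\{b\}$. Since the centre path is shorter and so has a common value $\eta$, and $\kappa\{\eta,g\}$ is one fixed set, these relations force $n=1$ and then $\eta=v_0=u_1$, hence $a=c$ and $b=d$ --- i.e.\ the endpoints collapse to singletons --- and only in that very rigid configuration does one further application of Lemma~\ref{lemCentre} produce the contradiction ($\kappa\{a,g\}$ and $\kappa\{b,g\}$ both contain $v$, yet one of them must equal $\{u,w\}$ with $u\neq v\neq w$). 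Your propagation mechanism is a reasonable alternative for the generic case, but without this global use of the centre path the argument does not go through; as written, the proposal identifies where the substance of the proof lies but does not supply it.
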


\begin{proof}
  From the diamond rule we see that any two adjacent nodes have a value in
  common, so in any exception to the lemma the nodes must contain a
  subsequence of the form
  $a\ c\relbar b\ c\relbar\dots\relbar b\ c\relbar b\ d$, where
  $a\neq b\neq c\neq d$, and $b\ c$ is repeated $n\geq 1$ times.
  Let us suppose we have such a path in $D$, with $n$ minimal.

  Pick one of the two adjacent parallel paths.
  Applying the diamond rule to the diamonds between the two paths, in turn
  starting from the left, the values on the neighbouring path are
  $a\ \star\relbar b\ \star\relbar \dots\relbar b\ \star\relbar b\ \star$.
  Now, starting from the right and applying the diamond rule again, we can
  fill in the remaining values: the neighbouring path also has values
  $a\ c\relbar b\ c\relbar \dots\relbar b\ c\relbar b\ d$.
  So, applying Lemma~\ref{lemCentre}, we have a part of $D$
  that looks (up to a possible $90^\circ$ rotation) like
  $$
  \def\lrlap#1#2{\text{\llap{$#1$}}\,\text{\rlap{$#2$}}}%
  \begin{array}{*{11}{c@{\:}}l}
    &&a\ c\\
    &\diagup&&\diagdown\\
    a\ c&&\lrlap{u_0}{v_0}&&b\ c\\
    &\diagdown&&\diagcross&&\diagdown\\
    &&b\ c&&\lrlap{u_1}{v_1}&&\diagdots&\\
    &&&\diagdown&&\diagdown&&\diagdown\\
    &&&&\diagdots&&\diagdots&&b\ c\\
    &&&&&\diagdown&&\diagcross&&\diagdown\\
    &&&&&&b\ c&&\lrlap{u_n}{v_n}&&b\ d&,\\
    &&&&&&&\diagdown&&\diagup\\
    &&&&&&&&b\ d\\
  \end{array}
  \qquad
  \begin{array}{*5c@{}l}
    u_0&g&\to&c&c&,\\
    v_0&g&\to&a&b&,\\
    \\
    u_i&g&\to&c&c&,\\
    v_i&g&\to&b&b&,\\
    \multicolumn{5}{c}{(0<i<n)}\\
    \\
    u_n&g&\to&c&d&,\\
    v_n&g&\to&b&b&.\\
  \end{array}
  $$

  Since the path from $u_0\ v_0$ to $u_n\ v_n$ is shorter than our minimal bad
  path, there must be a value in common between $\{u_0,v_0\},\ \dots,\
  \{u_n,v_n\}$.  Since $a\neq b\neq c\neq d$, the conditions on the right
  above make this impossible unless $n=1$, and then $v_0\neq v_1\neq u_0\neq
  u_1$, so the common value must be $v_0=u_1$, which implies $c=a$ and $d=b$.
  Renaming $u:=u_0$, $v:=v_0=u_1$, and $w:=v_1$, with one more application of
  the diamond rule we have
  $$
  \begin{array}{*7{c@{\:}}l}
    &&a\ a&&u\ \star\\
    &\diagup&&\diagcross&&\diagdown\\
    a\ a&&u\ v&&b\ a&&w\ \star\\
    &\diagdown&&\diagcross&&\diagcross\\
    &&b\ a&&w\ v&&b\ b&,\\
    &&&\diagdown&&\diagup\\
    &&&&b\ b\\
  \end{array}
  \qquad
  \begin{array}{*5c@{}l}
    u&g&\to&a&a&,\\
    v&g&\to&a&b&,\\
    w&g&\to&b&b&.\\
  \end{array}
  $$

  By Lemma~\ref{lemCentre}, either $b\ g\to u\ w$ or $a\ g\to u\ w$.  But by
  \lem{abc*}, since $v\ g\to a\ b$, both $a\ g\to v\ \star$ and
  $b\ g\to v\ \star$.  This is impossible, since we know $u\neq v\neq w$.
\end{proof}

\begin{definition}
  To \emph{colour} a node $N$ shall mean to assign values $\Gamma_N$ and
  $\Delta_N\in K$ such that $\{\Gamma_N,\Delta_N\}$ is the value set at $N$.
  A colouring of all the nodes of $D_p$ shall be said to be \emph{consistent}
  if whenever $AB$ is a falling edge, $\Gamma_A=\Gamma_B$, and whenever $AB$
  is a rising edge, $\Delta_A=\Delta_B$.
\end{definition}

\begin{lemma}\label{lemColour}
  $D_p$ has a consistent colouring, unique unless every node of $D_p$ has the
  same value set and that common value set is doubleton, in which case $D_p$
  has two consistent colourings.
\end{lemma}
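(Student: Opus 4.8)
\emph{Plan.} The plan is to reformulate a consistent colouring as a choice of one value on each falling line and one on each rising line, use the linear rule to bound the number of possibilities, obtain uniqueness by a propagation argument, and obtain existence by running the same propagation and verifying — with the diamond rule — that it never breaks down.

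I would first reparametrise. The falling lines of $D_p$ form a family indexed by $\ZZ$, and so do the rising lines; every node lies on exactly one of each, and every falling line meets every rising line in a single node. Call the falling lines \emph{columns} and the rising lines \emph{rows}. The consistency conditions say precisely that $\Gamma$ is constant along each column and $\Delta$ is constant along each row, so a consistent colouring is the same data as a value $\gamma_c\in K$ for each column $c$ and $\delta_r\in K$ for each row $r$ such that $\{\gamma_c,\delta_r\}$ is the value set $V(c,r)$ of the node $c\cap r$, for all $c,r$. Put $C_c=\bigcap_r V(c,r)$, the set of values occurring in every value set along $c$ (and $C_r$ likewise for rows). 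By the linear rule (Lemma~\ref{linearrule}), $C_c\neq\emptyset$; since it sits inside an at-most-doubleton set, $|C_c|\le 2$, and $|C_c|=2$ exactly when every node of $c$ has one and the same doubleton value set. In any colouring $\gamma_c\in C_c$ and $\delta_r\in C_r$, and then both lie in $V(c,r)$, so the requirement $\{\gamma_c,\delta_r\}=V(c,r)$ just says $\gamma_c\neq\delta_r$ whenever $V(c,r)$ is a doubleton.

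For uniqueness, fix a column $c_0$: in a consistent colouring the single value $\gamma_{c_0}\in C_{c_0}$ determines each $\delta_r$ (it is the element of $V(c_0,r)$ other than $\gamma_{c_0}$, or $\gamma_{c_0}$ itself if that set is a singleton), which in turn forces $\gamma_c$ for every column neighbouring $c_0$, and so on across the connected graph $D_p$. Hence there are at most $|C_{c_0}|\le 2$ consistent colourings. For existence I would run exactly this propagation as a construction: choose $\gamma_{c_0}\in C_{c_0}$, read off the $\delta_r$, and extend column by column. What must be checked at each step is that the $\Gamma$-values it forces at the nodes of the next column come out constant along it. Applying the diamond rule (Lemma~\ref{diamondrule}) to each unit diamond between the current column and the next reduces this to a small case check on coincidences among the four values $a,b,c,d$ of the diamond; every case works out, except possibly a diamond whose value sets are $\{a,b\},\{a,b\}$ on one column and $\{c,b\},\{c,b\}$ on the other with $a,b,c$ distinct, reached with the common value of the first column equal to $b$. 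For such a diamond Lemma~\ref{lemCentre} applies to its central node of $D_{1-p}$ and produces a value $v$ with $v\ g\to b\ b$, whence $2b=2g$ by \ax{aa02a} and \lem{cd2a2b} (using $2g\neq0$, true because $g\notin K[2]$); one then checks that this is incompatible with the column already being consistently coloured, so the bad case cannot arise and the propagation always completes. Thus every $\gamma_{c_0}\in C_{c_0}$ extends to a consistent colouring, and the number of consistent colourings is exactly $|C_{c_0}|$.

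Finally, repeating the argument with an arbitrary column or row as the base line shows the number of consistent colourings equals $|C_c|$ for every column and $|C_r|$ for every row, so all these numbers coincide. If the common value is $2$, every column and every row is monochromatic with a doubleton value set, and since every column meets every row all these doubletons agree — so every node of $D_p$ has a single common doubleton value set; conversely in that situation $|C_{c_0}|=2$ and the two colourings are obtained from one another by interchanging the two elements. This gives the stated dichotomy. The one genuinely non-routine point is the existence step: ruling out the degenerate diamond so that the propagation never gets stuck, which is exactly where the diamond rule, Lemma~\ref{lemCentre}, and the earlier properties of the map $2$ are needed.
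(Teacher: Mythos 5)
Your row/column reformulation and the uniqueness half are sound (for $C_c\neq\emptyset$ on an infinite column you should note that the common-value sets of the finite subpaths form a decreasing chain of nonempty subsets of a set of size at most two, so their intersection is nonempty — routine). The gap is in the existence step, which is the entire content of the lemma. You correctly isolate the one configuration where the propagation can break: adjacent nodes with value set $\{a,b\}$ on the current column, coloured with $\Gamma=b$, facing nodes with value set $\{c,b\}$ on the next column, $a\neq c$ (note that $b=c$ must also be allowed, so ``$a,b,c$ distinct'' is not quite the right description). But the resolution — ``one then checks that this is incompatible with the column already being consistently coloured'' — is precisely the hard part, and it cannot be a local check. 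The value-set configuration itself, together with all the consequences you extract from Lemma~\ref{lemCentre} ($v\ g\to b\ b$, $2v=0$, $2b=2g$), is realizable: in the Kummer of $\ZZ/8$ with $g=\-1$, $\alpha=\phi(2)$, $\beta=\phi(7)$, the nodes $(0,0),(1,-1)$ both have value set $\{\-1,\-3\}$ while $(1,1),(2,0)$ have value set $\{\-3\}$, the centre value $v=\-4$ satisfies $\-4\ \-1\to\-3\ \-3$ and $2\-3=2\-1=\-2$, and nothing contradicts anything. What prevents the propagation from ever being run with $\Gamma=\-3$ there is that $\-3$ fails to lie in every value set of the column (e.g.\ the node $(2,-2)$ has value set $\{\-1\}$), so $\-3\notin C_{c}$. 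In other words, ruling out your bad case genuinely requires the global hypothesis that $b$ is a common value of the \emph{entire} current column, and an argument tracking how the value sets of the two whole columns interact; your sketch gives no such argument. (The paper supplies it via the observation that equal adjacent value sets propagate in infinite ladders, which is what lets it contract those edges and reduce to a grid where every edge forces its endpoints' colours.)

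A second, dependent, problem: your concluding count ``the number of consistent colourings equals $|C_c|$ for every column $c$'' presupposes that every element of $C_c$ extends, i.e.\ that a column all of whose value sets equal one doubleton forces every node of $D_p$ to carry that same value set. That statement is true, but it is a consequence of the lemma rather than something you have established, so the dichotomy you derive from it at the end is circular until the existence step is actually carried out.
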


\begin{proof}
  Say that an edge of $D_p$ is \emph{even} if the value sets of the two
  nodes it joins are equal.
  Looking at the diamond of Lemma~\ref{diamondrule}, the upper left edge
  is even if and only if $a=c$, if and only if the lower right edge is even.
  Similarly the upper right and lower left edges are each even if and only if
  $b=d$.  Thus even edges occur in infinite \emph{ladders} of parallel edges.

  Let $D'_p$ be the graph obtained by contracting all even edges of $D_p$.
  Because only even edges have been contracted, the nodes of $D'_p$ inherit
  well-defined value sets from the nodes of $D_p$.
  Because even edges form ladders, the underlying graph of $D'_p$ is
  isomorphic to a subgraph of $D_p$ consisting of the nodes
  $\{\,(n,m)\in D_p \mid \lambda_+\leq n+m\leq \mu_+
  \text{ and } \lambda_-\leq n-m\leq \mu_-\,\}$, for some
  $-\infty\leq\lambda_\pm\leq\mu_\pm\leq\infty$.
  Clearly the notion of rising and falling edges can be applied naturally to
  $D'_p$, and the diamond rule and the linear rule for $D'_p$ follow from the
  same for $D_p$.  For an even edge $AB$, whether rising or falling, the
  condition that a colouring be consistent is equivalent to
  $\Gamma_A=\Gamma_B$ and $\Delta_A=\Delta_B$, so consistent colourings of
  $D_p$ and of $D'_p$ are in one-to-one correspondence.

  $D'_p$ has no even edges, so given any edge $AB$ of $D'_p$, there is only
  one value in common between $A$ and $B$, so there is only one possible
  consistent colouring of just $A$ and $B$.
  (If the edge is falling, the common value must be $\Gamma_A=\Gamma_B$, if
  rising, $\Delta_A=\Delta_B$.)
  Say that the edge $AB$ \emph{forces} those particular colourings of $A$ and
  $B$.

  If there is at least one non-even edge in $D_p$, then every vertex $A$ of
  $D'_p$ meets an edge $AB$, which forces a particular colouring of $A$.
  So there is at most one consistent colouring of $D'_p$, and so of $D_p$.

  In this case, to prove the existence of a consistent colouring of $D'_p$, we
  must show that any two coincident edges $AB$, $BC$ force the same colouring
  on their common end $B$.

  But if $AB$ and $BC$ are not parallel, then they are part of a diamond,
  and the diamond rule states that any diamond can be consistently coloured.
  If $AB$ and $BC$ are parallel, they are a straight path, and the
  linear rule states that such a path can be consistently coloured.

  So $AB$ and $BC$ must force the same colouring on $B$.

  On the other hand, if every edge of $D_p$ is even, which is to say the value
  sets at all nodes of $D_p$ are equal, $D'_p$ is a single node, and any
  colouring of a single node is consistent; so there are two consistent
  colourings if the value set is doubleton and just one if the value set is
  singleton.
\end{proof}

\begin{lemma}\label{lemNot2x2}
  At most one of the components $D_0$ and $D_1$ has two distinct consistent
  colourings.
\end{lemma}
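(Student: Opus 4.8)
The plan is to argue by contradiction, and the first step is to translate the hypothesis through Lemma~\ref{lemColour}: that lemma says a component $D_p$ has two distinct consistent colourings precisely when every node of $D_p$ carries the same value set and that value set is a doubleton. So I would assume for contradiction that this happens for \emph{both} components: every node of $D_0$ has value set $V_0=\{\sigma,\tau\}$ with $\sigma\neq\tau$, and every node of $D_1$ has value set $V_1=\{\mu,\nu\}$ with $\mu\neq\nu$.

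Next I would feed a diamond of $D_0$ into Lemma~\ref{lemCentre}. All four corners of such a diamond lie in $D_0$, so they share the value set $\{\sigma,\tau\}$; in the notation of Lemma~\ref{diamondrule} this forces $\{a,b\}=\{c,b\}=\{a,d\}=\{c,d\}=\{\sigma,\tau\}$, and since $\sigma\neq\tau$ we must have $a=c$ and $b=d$, so in fact all four corners are labelled $a\ b$ with $\{a,b\}=\{\sigma,\tau\}$. Lemma~\ref{lemCentre} then says the middle node — which lies in $D_1$, hence has value set $\{\mu,\nu\}$ — has values $u\ v$ with $u\ g\to a\ a$ and $v\ g\to b\ b$; as $\{u,v\}=\{\mu,\nu\}$ we get $u\neq v$, so after renaming $\sigma,\tau$ and $\mu,\nu$ we may take $\mu\ g\to\sigma\ \sigma$ and $\nu\ g\to\tau\ \tau$. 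Since $g\notin K[2]$ we have $2g\neq0$, so \lem{abcc} applied to $\mu\ g\to\sigma\ \sigma$ gives $2\mu=0$; and the identical argument applied to a diamond of $D_1$ (whose middle node lies in $D_0$) gives $2\sigma=0$.

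The contradiction then falls out of reading $\mu\ g\to\sigma\ \sigma$ two ways: by \lem{cd2a2b} it gives $\sigma\ \sigma\to 2\mu\ 2g$, that is $\sigma\ \sigma\to 0\ 2g$, whereas by \ax{aa02a} it gives $\sigma\ \sigma\to 0\ 2\sigma$, that is $\sigma\ \sigma\to 0\ 0$; since $\kappa$ is single-valued, $\{0,2g\}=\{0,0\}$, so $2g=0$, contradicting $g\notin K[2]$. I expect the only step needing real care to be the middle one: checking that a diamond all of whose corners share a doubleton value set is forced to carry a single ordered pair $a\ b$ at every corner, and then bookkeeping the renamings so that Lemma~\ref{lemCentre} genuinely produces the relations $\mu\ g\to\sigma\ \sigma$ and $\nu\ g\to\tau\ \tau$ with the middle values being exactly $\mu$ and $\nu$. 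After that, the symmetric diamond-of-$D_1$ remark (which supplies $2\sigma=0$) and the final two-line calculation are routine.
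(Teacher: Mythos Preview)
Your proposal is correct and follows essentially the same approach as the paper's proof: both assume for contradiction that each component has a constant doubleton value set, use Lemma~\ref{lemCentre} on a diamond to extract relations of the form $c\ g\to a\ a$, apply \lem{abcc} to obtain 2-torsion on both sides, and then derive $2g=0$. The only difference is cosmetic: in the final step the paper applies \ax{2ishom} and \lem{ab0*} directly (from $c\ g\to a\ a$ to $2c\ 2g\to 0\ 0$, hence $2g=2c=0$), whereas you route through \lem{cd2a2b} and \ax{aa02a} to compare two values of $\kappa\{\sigma,\sigma\}$.
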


\begin{proof}
  Suppose we have a counterexample.  Then by Lemma~\ref{lemColour} all value
  sets at nodes of $D_p$ are equal, for each $p$.  So each diamond of $D_0$
  looks like
  $$
  \begin{array}{*5{c@{\:}}l}
    &&a\ b\\
    &\diagup&&\diagdown\\
    a\ b&&c\ d&&a\ b&,\\
    &\diagdown&&\diagup\\
    &&a\ b\\
  \end{array}
  \qquad
  \begin{array}{*5c@{}l}
    c&g&\to&a&a&,\\
    d&g&\to&b&b&.\\
  \end{array}
  $$
  Since $2g\neq 0$, by \lem{abcc} we must have $2c=2d=0$.  Similarly
  $2a=2b=0$.  But by \ax{2ishom}, if $c\ g\to a\ a$ then
  $2c\ 2g\to 2a\ 2a=0\ 0$.  So, by \lem{ab0*}, we have $2g=2c=0$, a
  contradiction.
\end{proof}

Now we can rephrase the definitions of $+$, $-$, $\oplus$, and $\ominus$ from
Section~\ref{secStrings} in terms of colourings of $D$:

\begin{lemma}\label{lemRedef+-}
  For $\gamma,\delta,\~\gamma,\~\delta\in K_g$, respectively
  $$
  \begin{array}{rcl}
    \gamma&=&\alpha+\beta;\\
    \delta&=&\alpha-\beta;\\
    \~\gamma&=&\alpha\oplus\beta;\\
    \~\delta&=&\alpha\ominus\beta\\
  \end{array}
  $$
  if and only if there is a consistent colouring
  $(\Gamma_{\cdot,\cdot},\Delta_{\cdot,\cdot})$ of $D$ such that, for all
  $n,m\in\ZZ$, respectively
  $$
  \begin{array}{rcl}
    \gamma_{n+m}&=&\Gamma_{n,m};\\
    \delta_{n-m}&=&\Delta_{n,m};\\
    \~\gamma_{n+m}&=&\left\{
    \begin{array}{l@{\qquad}l}
      \Gamma_{n,m}&\text{if }n\equiv m\pmod2,\\
      \Delta_{n,-m}&\text{if }n\not\equiv m\pmod2;\\
    \end{array}\right.\\
    \~\delta_{n-m}&=&\left\{
    \begin{array}{l@{\qquad}l}
      \Delta_{n,m}&\text{if }n\equiv m\pmod2,\\
      \Gamma_{n,-m}&\text{if }n\not\equiv m\pmod2.\\
    \end{array}\right.\\
  \end{array}
  $$
  \qed
\end{lemma}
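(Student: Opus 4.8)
The plan is to read this lemma as a dictionary translating the definitions of $+,-,\oplus,\ominus$ from Section~\ref{secStrings} into statements about colourings of $D$. I would prove the $+/-$ assertion in full and then obtain the $\oplus/\ominus$ one by the same argument with a parity bookkeeping overlaid.

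For $+/-$, the point is that a consistent colouring of $D$ is secretly just a pair of functions $\ZZ\to K$: since a falling edge joins $(n,m)$ to $(n+1,m-1)$ — on which $n+m$ is constant — and a rising edge joins $(n,m)$ to $(n+1,m+1)$ — on which $n-m$ is constant — consistency says exactly that $\Gamma_{n,m}$ depends only on $n+m$ and $\Delta_{n,m}$ only on $n-m$; write $\Gamma_{n,m}=\gamma_{n+m}$ and $\Delta_{n,m}=\delta_{n-m}$. Such a pair $(\Gamma,\Delta)$ is a colouring in the sense of this section — i.e. $\{\Gamma_{n,m},\Delta_{n,m}\}=\kappa\{\alpha_n,\beta_m\}$ at each node — precisely when $\alpha_n\ \beta_m\to\gamma_{n+m}\ \delta_{n-m}$ for all $n,m$, and that is the definition of $\gamma=\alpha+\beta$ together with $\delta=\alpha-\beta$. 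So both directions of the first two parts fall out at once, once one observes that prescribing the $\gamma$-values $\gamma_{n+m}=\Gamma_{n,m}$ pins down the $\delta$-values $\delta_{n-m}=\Delta_{n,m}$ via the colouring, and vice versa.

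The one non-formal step is, in the direction from a colouring to $+$, to confirm that the function $\delta$ so produced really lies in $K_g$ (which is part of what "$\gamma=\alpha+\beta$" asserts). Here I would apply Lemma~\ref{lemCentre} to the diamond centred at a node $(k,l)$: its value set $\{u,v\}$ equals $\{\gamma_{k+l},\delta_{k-l}\}$ and satisfies $u\ g\to\gamma_{k+l-1}\ \gamma_{k+l+1}$ and $v\ g\to\delta_{k-l-1}\ \delta_{k-l+1}$. Since $\gamma\in K_g$, also $\gamma_{k+l}\ g\to\gamma_{k+l-1}\ \gamma_{k+l+1}$; comparing these and running a brief case split (the only awkward case, $v=\gamma_{k+l}$, forces $\{\delta_{k-l-1},\delta_{k-l+1}\}=\{\gamma_{k+l-1},\gamma_{k+l+1}\}$ and $u=\delta_{k-l}$) yields $\delta_{k-l}\ g\to\delta_{k-l-1}\ \delta_{k-l+1}$ in every case, so $\delta\in K_g$ as $k-l$ ranges over $\ZZ$. (If the lemma is read so that in each part the companion string is also hypothesised to lie in $K_g$, this step is not needed.)

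For $\oplus/\ominus$ I would run the same argument, changing only how the colouring is read off: set $\Gamma_{n,m}:=\~\gamma_{n+m}$ and $\Delta_{n,m}:=\~\delta_{n-m}$ when $n\equiv m\pmod2$, and $\Gamma_{n,m}:=\~\delta_{n+m}$ and $\Delta_{n,m}:=\~\gamma_{n-m}$ when $n\not\equiv m\pmod2$. Then consistency of $(\Gamma,\Delta)$ unwinds to exactly the well-definedness of $\~\gamma$ and $\~\delta$ as functions on $\ZZ$ (the reflection $m\mapsto-m$ in the lemma's formulas matching the parity interchange in the definition of $\oplus,\ominus$), and the colouring condition $\{\Gamma_{n,m},\Delta_{n,m}\}=\kappa\{\alpha_n,\beta_m\}$ unwinds, in the two parity cases, to the two lines of that definition. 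I expect the only real obstacle to be clerical — keeping the indices $n\pm m$ and their reflected partners straight through the parity split, and, if needed, re-running the $K_g$-membership argument of the previous paragraph in this twisted form.
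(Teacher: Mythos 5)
Your proposal is correct and is essentially the paper's (non-)proof: the lemma carries a \verb|\qed| in its statement because the authors regard it as a direct unwinding of the definitions, namely the observation that consistency of a colouring is exactly the statement that $\Gamma_{n,m}$ depends only on $n+m$ and $\Delta_{n,m}$ only on $n-m$, which is the translation you carry out. Your extra step verifying that the companion string lies in $K_g$ via Lemma~\ref{lemCentre} is not required under the paper's reading (all four strings are hypothesised to lie in $K_g$, and the paper only ever invokes the backward direction after separately establishing membership, as in Theorem~\ref{thmNon4torsion}), but it is a harmless and correctly executed addition.
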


\begin{lemma}\label{lemPartialfunctions}
  There can be at most one string in $K_g$ satisfying each of these conditions.
\end{lemma}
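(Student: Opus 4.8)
The plan is to reduce everything to the colourings of Lemma~\ref{lemRedef+-} and exploit the near-uniqueness of those colourings established in Lemmas~\ref{lemColour} and~\ref{lemNot2x2}. I will describe the argument for $+$; the cases of $-$, $\oplus$, and $\ominus$ differ only in the parity bookkeeping supplied by Lemma~\ref{lemRedef+-}. Suppose $\gamma,\gamma'\in K_g$ both satisfy $\gamma=\alpha+\beta$. By Lemma~\ref{lemRedef+-} each arises from a consistent colouring of $D$, say $(\Gamma,\Delta)$ and $(\Gamma',\Delta')$, with $\gamma_{n+m}=\Gamma_{n,m}$ and $\gamma'_{n+m}=\Gamma'_{n,m}$ for all $n,m$. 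Since $D$ is the disjoint union of $D_0$ and $D_1$ and the consistency conditions are edge-local, these restrict to consistent colourings of each $D_p$. By Lemmas~\ref{lemColour} and~\ref{lemNot2x2}, at least one component — which we may take to be $D_0$ after possibly renaming the two components — has a \emph{unique} consistent colouring; hence $\Gamma$ and $\Gamma'$ agree on $D_0$, and since the nodes $(n,m)$ with $n+m$ even all lie in $D_0$, this already yields $\gamma_k=\gamma'_k$ for every even $k$.

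If $D_1$ too has a unique consistent colouring then $\Gamma=\Gamma'$ on all of $D$ and $\gamma=\gamma'$. Otherwise, by Lemma~\ref{lemColour} — whose proof shows that in this case the colouring of $D_1$ is constant — every node of $D_1$ carries the same value set $\{x,y\}$ with $x\neq y$, and the two consistent colourings of $D_1$ are $\Gamma_N=x$, $\Delta_N=y$ at every node $N$, and $\Gamma_N=y$, $\Delta_N=x$ at every node $N$. Reading off values at the nodes with $n+m$ odd — which all lie in $D_1$ — we find that $\gamma$ is constant on the odd indices, taking the value $x$ or $y$, and likewise $\gamma'$. If these constants coincide then $\gamma=\gamma'$; otherwise we may assume $\gamma_k=x$ and $\gamma'_k=y$ for all odd $k$.

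The remaining step, and the only one that uses that $\gamma$ and $\gamma'$ are genuine strings rather than arbitrary sequences, is to rule this out. Put $a:=\gamma_0=\gamma'_0$. The string relation at $n=0$ reads $a\ g\to\gamma_{-1}\ \gamma_1$; since $\gamma_{-1}=\gamma_1=x$, this says $a\ g\to x\ x$, while the same relation for $\gamma'$ says $a\ g\to y\ y$. But $\kappa\{a,g\}$ is a single element of $\S K$, so $\{x,x\}=\{y,y\}$ and hence $x=y$, a contradiction. So $\gamma=\gamma'$, which proves the claim for $+$; the argument works verbatim for $-$, $\oplus$, and $\ominus$, where in each case the candidate string is pinned down by the unique colouring of one component along one residue class of indices modulo $2$, is forced to be constant with at most two values on the other residue class, and then has those two values collapsed by the string relation at an index both of whose neighbours lie in that other class. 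I expect no real obstacle here beyond keeping the parities in Lemma~\ref{lemRedef+-} straight; the substantive input is Lemma~\ref{lemNot2x2}, which is precisely what guarantees that one residue class is genuinely determined.
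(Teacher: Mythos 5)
Your proof is correct and follows essentially the same route as the paper's: reduce to the at-most-two consistent colourings via Lemmas~\ref{lemColour} and~\ref{lemNot2x2}, observe that the two candidate sequences agree on the residue class coming from the uniquely-coloured component and are distinct constants on the other, and then kill the second possibility by applying the string relation at an index whose two neighbours lie in the ambiguous class. The only cosmetic difference is that the paper keeps the parity $p$ general rather than renaming components, but the substance is identical.
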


\begin{proof}
  This is clear from Lemma~\ref{lemRedef+-} in the case where $D$ has only one
  consistent colouring.  In the remaining case, by Lemma~\ref{lemColour} and
  Lemma~\ref{lemNot2x2}, there are two consistent colourings
  $(\Gamma_{\cdot,\cdot},\Delta_{\cdot,\cdot})$, 
  $(\Gamma'_{\cdot,\cdot},\Delta'_{\cdot,\cdot})$ of $D$: on one component
  of $D$, say $D_p$, the two are identical, while on the other, $D_{1-p}$, they
  satisfy $\Gamma'=\Delta$ and $\Delta'=\Gamma$.  Lemma~\ref{lemColour}
  further says that $\Gamma$ and $\Gamma'$ must each be constant on
  $D_{1-p}$, and not equal to each other.

  From $(\Gamma_{\cdot,\cdot},\Delta_{\cdot,\cdot})$ we can derive sequences
  $\gamma,\delta,\~\gamma,\~\delta\in K^\ZZ$ by the equations given in
  Lemma~\ref{lemRedef+-}, and in the same way from
  $(\Gamma'_{\cdot,\cdot},\Delta'_{\cdot,\cdot})$ we can derive
  $\gamma',\delta',\~\gamma',\~\delta'\in K^\ZZ$.

  Suppose both $\gamma$ and $\gamma'$ are in $K_g$.  Then
  $\gamma_p\ g\to\gamma_{p-1}\ \gamma_{p+1}$ and
  $\gamma'_p\ g\to\gamma'_{p-1}\ \gamma'_{p+1}$;
  in other words both $\Gamma_{p,0}\ g\to\Gamma_{p-1,0}\ \Gamma_{p+1,0}$ and
  $\Gamma'_{p,0}\ g\to\Gamma'_{p-1,0}\ \Gamma'_{p+1,0}$.
  But $\Gamma_{p,0}=\Gamma'_{p,0}$ whereas
  $\Gamma_{p-1,0}=\Gamma_{p+1,0}\neq\Gamma'_{p-1,0}=\Gamma'_{p+1,0}$, so
  this is impossible.

  Similarly, at most one of each of the pairs $\{\delta,\delta'\}$,
  $\{\~\gamma,\~\gamma'\}$, and $\{\~\delta,\~\delta'\}$ can be in $K_g$.
\end{proof}

\begin{remark}
  This fulfils the promise made in Section~\ref{secStrings}, to prove that
  $+$, $-$, $\oplus$, and $\ominus$ are partial functions.
\end{remark}

\section{Non-4-torsion Kummer structures}\label{secNon4torsion}

Recall that we defined $4g=2(2g)$.  In this section we consider the
constructions of the previous two sections in the case $4g\neq 0$,
and show that in this case $K$ is the Kummer of a group.
Throughout this section $r\equiv s$ always means modulo 2.
First we need a quick lemma:

\begin{lemma}\label{lemNota*a*a}
  If $\alpha\in K_g$ with $4g\neq0$ then it cannot be that
  $\alpha_{n-2}=\alpha_n=\alpha_{n+2}$ for any $n$.
\end{lemma}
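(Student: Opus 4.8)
The plan is to argue by contradiction: suppose $\alpha\in K_g$ with $4g\neq0$ but $\alpha_{n-2}=\alpha_n=\alpha_{n+2}=:a$ for some $n$. We want to squeeze enough relations out of this to force $4g=0$. The first observation is that by Lemma~\ref{lem2step} we have $a\ 2g\to a\ a$, i.e.\ $\alpha_n\ 2g\to\alpha_{n-2}\ \alpha_{n+2}$ with both outputs equal to $a$. By \lem{abcc} applied to $a\ 2g\to a\ a$, the outputs coincide exactly when $2a=0$ or $2(2g)=0$, that is $2a=0$ or $4g=0$; since $4g\neq0$ we conclude $2a=0$, so $a\in K[2]$.

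Next I would feed $a\in K[2]$ back into the string structure at level $g$. We have $\alpha_{n-1}\ g\to\alpha_{n-2}\ \alpha_n=a\ a$, so by \lem{cd2a2b} (taking $c=d=a$) we get $a\ a\to 2\alpha_{n-1}\ 2g$; but also \ax{aa02a} gives $a\ a\to 0\ 2a=0\ 0$. Hence $\{2\alpha_{n-1},2g\}=\{0,0\}$, so in particular $2g=0$ — contradicting $g\notin K[2]$ outright, which is even stronger than we need and already finishes the proof. (One should double-check the direction of \lem{cd2a2b}: from $\alpha_{n-1}\ g\to a\ a$ it yields $a\ a\to 2\alpha_{n-1}\ 2g$, which is exactly the hypothesis-to-conclusion form stated.) So the key steps are: (1) use Lemma~\ref{lem2step} to get $a\ 2g\to a\ a$; (2) use \lem{abcc} and $4g\neq0$ to deduce $2a=0$; (3) use the string relation $\alpha_{n-1}\ g\to a\ a$ together with \lem{cd2a2b} and \ax{aa02a} to force $2g=0$, contradicting $g\notin K[2]$.

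The step I expect to need the most care is verifying that the configuration $\alpha_{n-1}\ g\to a\ a$ genuinely holds and that \lem{cd2a2b} is being applied in the correct orientation — it is easy to mix up which pair is the "input" and which the "output". An alternative route that avoids \lem{cd2a2b} entirely is to apply \ax{2ishom} directly to $\alpha_{n-1}\ g\to a\ a$, obtaining $2\alpha_{n-1}\ 2g\to 2a\ 2a=0\ 0$, whence by \lem{ab0*} we get $2\alpha_{n-1}=2g$ and $2a=0\in\{0\}$; combined with step~(2) this still pins things down, though it is $a\ a\to 0\ 0$ via \ax{aa02a} and \lem{ab0*} that most cleanly yields $2g=0$. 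Either way the proof is short; the only real content is recognizing that $\alpha_{n-2}=\alpha_n=\alpha_{n+2}$ makes the $2g$-step degenerate, and that a degenerate $2g$-step is incompatible with $4g\neq0$ unless a $2$-torsion element has appeared where it should not.
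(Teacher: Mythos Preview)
Your proof is correct and follows essentially the same route as the paper: both use Lemma~\ref{lem2step} to get $a\ 2g\to a\ a$, then \lem{abcc} to force $2a=0$, then a neighboring string relation $\alpha_{n\pm1}\ g\to a\ a$ to finish. The only difference is in that last step: the paper applies \ax{2ishom} directly (your ``alternative route'') to reach $4g=0$, whereas your main route via \lem{cd2a2b} combined with \ax{aa02a} gives the marginally stronger $2g=0$; either yields the contradiction.
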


\begin{proof}
  By Lemma~\ref{lem2step}, we would have $\alpha_n\ 2g\to\alpha_n\ \alpha_n$,
  so by \lem{abcc}, since $4g\neq0$ we must have $2\alpha_n=0$.
  But now applying \ax{2ishom} we have
  $2\alpha_{n+1}\ 2g\to2\alpha_n\ 2\alpha_{n+2}=0\ 0$, so by \lem{ab0*} we
  have $2\alpha_{n+1}=2g$ and by \lem{abcc}, $4g=0$ after all.
\end{proof}

\begin{theorem}\label{thmNon4torsion}
  If $K$ is a Kummer structure and there is any $g\in K$ with $4g\neq 0$ then 
  $K\cong \K{G}$ for an abelian group $G$.
\end{theorem}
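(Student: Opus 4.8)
By Theorem~\ref{thmRecovery}, it suffices to pick any $g\in K$ with $4g\neq0$ (so in particular $g\notin K[2]$, since $2g\neq0$ would be needed — indeed $4g\neq0\implies 2g\neq0$ by \lem{20=0}) and show that $(K_g,+,\omicron,\rho)$ is an abelian group. Lemma~\ref{lemPartialfunctions} already tells us $+$ is a partial function; what remains is to show it is \emph{total} and \emph{associative}, that $\omicron$ is a two-sided identity, and that $\rho(\alpha)=\alpha-\alpha$'s partner gives inverses — more precisely that $\alpha+\rho(\alpha)=\omicron$ and $\alpha+\omicron=\alpha$. Commutativity is essentially built into the symmetric definition (swapping $\alpha_n\ \beta_m$), so the real content is \textbf{totality} and \textbf{associativity}.

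**Totality via colourings.** By Lemma~\ref{lemRedef+-}, producing $\gamma=\alpha+\beta$ amounts to: take the consistent colouring $(\Gamma,\Delta)$ of the grid $D$ (which exists, and is essentially unique by Lemmas~\ref{lemColour} and~\ref{lemNot2x2}), read off $\gamma_{n+m}:=\Gamma_{n,m}$, and check this is well-defined (i.e. $\Gamma_{n,m}$ depends only on $n+m$) and that the resulting sequence $\gamma$ is actually a $g$-string. Well-definedness of $\gamma_{n+m}=\Gamma_{n,m}$ is exactly the linear rule (Lemma~\ref{linearrule}) applied to the falling lines $n+m=\text{const}$: on such a line consecutive nodes share the value $\Gamma$, and consistency pins $\Gamma$ to be that common value, so $\Gamma$ is constant along each falling line. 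Similarly $\Delta$ is constant along rising lines, giving $\delta=\alpha-\beta$ simultaneously. That $\gamma$ is a $g$-string, $\gamma_k\ g\to\gamma_{k-1}\ \gamma_{k+1}$, should come from the diamond rule plus Lemma~\ref{lemCentre}: the node relations tie $\Gamma$ on three nodes of a falling line to the value of $g$ at an adjacent node, which is $g$ itself. So the only way totality can fail is the degenerate two-colouring case of Lemma~\ref{lemColour} — and this is precisely where $4g\neq0$ enters.

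**The role of $4g\neq0$ (the main obstacle).** The genuine difficulty, and where I expect to spend effort, is ruling out the bad case: that the colouring $(\Gamma,\Delta)$ exists but the derived sequence $\gamma$ fails to lie in $K_g$, which by the proof of Lemma~\ref{lemPartialfunctions} happens only when $D$ has two consistent colourings and $\gamma\notin K_g$ for the ``wrong'' one. I would argue that when $4g\neq0$, the degenerate situation (all value sets on a component of $D$ equal and doubleton) simply cannot arise, or if it does, the correct colouring still yields a string. The key leverage is Lemma~\ref{lemNota*a*a}: in a $g$-string with $4g\neq0$ one cannot have $\alpha_{n-2}=\alpha_n=\alpha_{n+2}$. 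I would feed this into the grid: along a falling line $n+m=\text{const}$, the nodes have second coordinate $m,m-2,m-4,\dots$ after stepping by the ``two-step'' relation (Lemma~\ref{lem2step}), and a constant value set of the degenerate type would force a forbidden $a\ a\ a$ pattern in some string $\alpha$, $\beta$, or the diagonal string of Lemma~\ref{lemCentre}. This should exclude the degeneracy, leaving the unique colouring, hence $\alpha+\beta$ always defined.

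**Wrapping up the group axioms.** Once $+$ is total, the identity $\alpha+\omicron=\alpha$ follows because the grid for $(\alpha,\omicron)$ has $\beta_m$-row governed by $\omicron$, and the definition of $\omicron$ (with $\omicron_0=0$, $\omicron_1=g$) together with \ax{id} forces $\Gamma_{n,m}=\alpha_{n+m}$ directly. Inverses: $\alpha+\rho(\alpha)=\omicron$ is the statement that for all $n,m$, $\alpha_n\ \alpha_{-m}\to\omicron_{n+m}\ \star$, i.e. $\alpha_n\ \alpha_{-m}\to \alpha_{n+m}(\text{some shift of }\omicron)$; setting $n=m$ this reads $\alpha_n\ \alpha_{-n}\to 0\ \star$, which holds by \lem{ab0*} since... well, one needs $\alpha_n=\alpha_{-(-n)}$, and more care — this is the one spot where I'd verify the indexing honestly rather than wave. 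Finally, associativity $(\alpha+\beta)+\gamma=\alpha+(\beta+\gamma)$: I would deduce it from \ax{assoc} applied to a three-dimensional analogue of the grid, or more economically, from the uniqueness in Lemma~\ref{lemPartialfunctions} plus a direct check that both sides satisfy the same defining node-relations on a common refinement of the grids. Given totality, the standard ``both sides are the unique thing satisfying the universal property'' argument closes it, and Theorem~\ref{thmRecovery} then delivers $K\cong\K{G}$.
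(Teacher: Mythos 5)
Your skeleton is the one the paper uses (reduce to Theorem~\ref{thmRecovery}; prove totality of $+$, the identity and inverse laws, and associativity on $K_g$), but at both of the hard points you have misidentified or skipped the actual difficulty, and these are genuine gaps. On totality: you claim that once a consistent colouring $(\Gamma,\Delta)$ of $D$ is chosen, the sequence $\gamma_{n+m}:=\Gamma_{n,m}$ is a $g$-string ``by the diamond rule plus Lemma~\ref{lemCentre}'', and that the only possible failure is the degenerate two-colouring case. That is not where the problem lies. Lemma~\ref{lemCentre} says the centre node of a diamond has value set $\{u,v\}$ with $u\ g\to a\ c$ and $v\ g\to b\ d$; the consistent colouring decides which of $u,v$ is $\Gamma$ at the centre, but nothing yet guarantees that this is the element satisfying $\gamma_r\ g\to\gamma_{r-1}\ \gamma_{r+1}$ rather than the one satisfying $\delta_s\ g\to\delta_{s-1}\ \delta_{s+1}$. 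So at each centre either the ``straight'' relations hold or the ``skew'' ones with $\gamma$ and $\delta$ crossed, and ruling out the skew case is the real content of the totality argument: it can threaten even when the colouring is unique, and it is exactly where $4g\neq0$ enters (the paper shows the skew case propagates to a whole parity class or to all of $\ZZ$, and each alternative forces $4g=0$ via \lem{abcc} and \ax{2ishom}). Your proposed use of Lemma~\ref{lemNota*a*a} against the degenerate colouring attacks a non-problem: uniqueness of the colouring is already settled by Lemmas~\ref{lemColour} and~\ref{lemNot2x2} using only $2g\neq0$.

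On associativity: there is no ``universal property'' argument available. \ax{assoc} applied to $\alpha_p,\beta_q,\gamma_r$ only tells you that $(\alpha+(\beta+\gamma))_{p+q+r}$ is one of the four values $((\alpha\pm\beta)\pm\gamma)$ at the appropriate indices, delivered as unordered pairs; it does not single out $((\alpha+\beta)+\gamma)_{p+q+r}$. Disentangling the four candidates is the hardest step of the whole proof: the paper shows that a failure $\delta_n\neq\delta'_n$ forces $\delta$ into a near-periodic pattern ($\delta_n=\delta_{n\pm2}$ or $\delta_{n\pm4}$), then derives $4g=0$ using Lemma~\ref{lemNota*a*a}, Lemma~\ref{lem2step}, \lem{abcc} and \ax{2ishom}. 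Nothing in your sketch supplies this. (Smaller point: the inverse law also needs Lemma~\ref{lemNota*a*a}, to exclude $(\alpha-\rho(\alpha))_{2n}=0$ holding for all $n$; you flagged the indexing worry but not this, which is the actual fix.)
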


\begin{proof}
  By Theorem~\ref{thmRecovery}, the theorem will follow if we can show
  that $+$ and $-$ are total functions $K_g\times K_g\to K_g$,
  and that $K_g$ is made into an abelian group by taking $+$ as addition,
  $\omicron$ as zero, and $\rho$ as negation: then $G\cong K_g$.

  First take $\alpha,\beta\in K_g$, construct the graph $D$ as in
  Section~\ref{secGrid} and choose a consistent colouring
  $(\Gamma_{\cdot,\cdot},\Delta_{\cdot,\cdot})$ of it.
  Since the colouring is consistent, we can define $\gamma,\delta\in K^\ZZ$ by
  (for all $n,m\in\ZZ$)
  $$
  \begin{array}{rcl}
    \gamma_{n+m}&=&\Gamma_{n,m};\\
    \delta_{n-m}&=&\Delta_{n,m}.\\
  \end{array}
  $$
  We need first to prove that $\gamma,\delta\in K_g$, so that we have
  $\gamma=\alpha+\beta$ and $\delta=\alpha-\beta$.

  For any $r\equiv s$, we can write $r=n+m$, $s=n-m$ and apply
  Lemma~\ref{lemCentre} to the diamond centred at $(n,m)$ to give
  \begin{align}
  \label{eqnstr}
  \mbox{either }&\left\{
  \begin{array}{*5c@{}l}
    \gamma_r&g&\to&\gamma_{r-1}&\gamma_{r+1},\\
    \delta_s&g&\to&\delta_{s-1}&\delta_{s+1};\\
  \end{array}
  \right.\\
  \label{eqnskew}
  \mbox{or }&\left\{
  \begin{array}{*5c@{}l}
    \gamma_r&g&\to&\delta_{s-1}&\delta_{s+1},\\
    \delta_s&g&\to&\gamma_{r-1}&\gamma_{r+1}&.\\
  \end{array}
  \right.
  \end{align}
  Now $\gamma,\delta\in K_g\iff{}$(\ref{eqnstr}) holds for all $r,s$.  So
  suppose there is some $r\equiv s\equiv p$ for which one of the
  equations of (\ref{eqnstr}) fails.  Then both equations of (\ref{eqnskew})
  hold, and so both equations of (\ref{eqnstr}) fail.  But one of these
  equations depends only on $r$ and the other only on $s$, so (\ref{eqnstr})
  must fail and (\ref{eqnskew}) hold for all $r\equiv s\equiv p$.

  Suppose (\ref{eqnstr}) fails for all $r,s\equiv p$ but holds for all
  $r,s\not\equiv p$.  Then for any $r\equiv p$, we have (by \lem{abc*})
  $$
  \begin{array}{*{11}c@{}l}
    \gamma_{r-1}&g&\to&\gamma_r&\star&\iff&\gamma_r&g&\to&\gamma_{r-1}&\star
      &,\\
    \gamma_{r+1}&g&\to&\gamma_r&\star&\iff&\gamma_r&g&\to&\gamma_{r+1}&\star
      &,\\
  \end{array}
  $$
  but $\gamma_r\ g\not\to\gamma_{r-1}\ \gamma_{r+1}$, so
  $\gamma_{r-1}=\gamma_{r+1}$ and (by \lem{abcc}) $2g=0$ or $2\delta_{r}=0$.
  The former is a contradiction, but given the latter for all
  $r\equiv p$, we can apply \ax{2ishom} to
  $\delta_{r+1}\ g\to\delta_r\ \delta_{r+2}$ to get $2\delta_{r+1}\ 2g\to0\
  0$,  whence $4g=0$, also a contradiction.

  Therefore if (\ref{eqnstr}) fails at all, it fails and (\ref{eqnskew})
  holds for all $r\equiv s$.  An application of \ax{assoc},
  $$
  \begin{array}{*5c@{}l}
    &g&&g&g\\
    \gamma_0&g&\to&\delta_{-1}&\delta_1\\
    &\downarrow&&\downarrow&\downarrow\\
    \gamma_0&0&\to&\gamma_0&\gamma_0\\
    \gamma_0&2g&\to&\gamma_2&\gamma_2&,\\
  \end{array}
  $$
  gives $\gamma_0\ 2g\to\gamma_2\ \gamma_2$, so by \lem{abcc}, either
  $4g=0$ (contradiction) or $2\gamma_0=0$.  Similarly $2\gamma_2=0$.
  But then \ax{2ishom} applied to $\delta_1\ g\to\gamma_0\ \gamma_2$ gives
  $2\delta_1\ 2g\to0\ 0$, so $4g=0$, contradiction again.

  So we always have $\gamma,\delta\in K_g$ and $+$ and $-$ are total
  functions.  We must show that $(+,\omicron,\rho)$ make $K_g$ into an abelian
  group; that is, that $+$ is associative and commutative, that
  $\alpha+\omicron=\alpha$ and that $\alpha+\rho(\alpha)=\omicron$.

  $+$ is obviously commutative.

  By definition
  $\alpha_n\ \omicron_0\to(\alpha+\omicron)_n\ (\alpha-\omicron)_n$.
  But $\omicron_0=0$, so $\alpha_n\ \omicron_0\to\alpha_n\ \alpha_n$.
  So $\alpha+\omicron=\alpha\;(\,=\alpha-\omicron\,)$.
  
  By definition
  $\alpha_n\ \rho(\alpha)_{-n}\to(\alpha+\rho(\alpha))_0\
  (\alpha-\rho(\alpha))_{2n}$.
  But $\rho(\alpha)_{-n}=\alpha_n$,
  so $\alpha_n\ \rho(\alpha)_{-n}\to 0\ \star$.
  So either $(\alpha+\rho(\alpha))_0=0$
  or $(\alpha-\rho(\alpha))_{2n}=0$.
  But the latter (for all $n$) contradicts Lemma~\ref{lemNota*a*a}, so
  $(\alpha+\rho(\alpha))_0=0$, which by definition of $\omicron$ means
  $\alpha+\rho(\alpha)=\omicron$.

  All that's left is to show $+$ to be associative.
  Fix $\alpha,\beta,\gamma\in K_g$, and set $\delta=\alpha+(\beta+\gamma)$
  and $\delta'=(\alpha+\beta)+\gamma$.

\begin{claim}
  For any $n$, if $\delta_n\neq\delta'_n$
  then either $\delta_{n+2}=\delta_n$ or $\delta_{n+4}=\delta_n$,
  and either $\delta_{n-2}=\delta_n$ or $\delta_{n-4}=\delta_n$.
\end{claim}

\begin{proof}
  For any $p+q+r=n$, consider this instance of \ax{assoc}:
  $$
  \begin{array}{*5c@{}l}
    &\gamma_r&&\gamma_r&\gamma_r\\
    \alpha_p&\beta_q&\to&(\alpha+\beta)_{p+q}&(\alpha-\beta)_{p-q}\\
    &\downarrow&&\downarrow&\downarrow\\
    \alpha_p&(\beta+\gamma)_{q+r}&\to&x&y\\
    \alpha_p&(\beta-\gamma)_{q-r}&\to&\star&\star&.\\
  \end{array}
  $$
  One of $x$ and $y$ is $(\alpha+(\beta+\gamma))_{p+q+r}=\delta_n$.
  Since $\delta_n\neq\delta'_n=((\alpha+\beta)+\gamma)_{p+q+r}$,
  we must have
  \begin{equation}\label{eqnEpsilon}
  \delta_n=\text{one of }\left\{
  \begin{array}{l}
    ((\alpha+\beta)-\gamma)_{p+q-r}=:(\epsilon^0)_{m_0},\\
    ((\alpha-\beta)+\gamma)_{p-q+r}=:(\epsilon^1)_{m_1},\\
    ((\alpha-\beta)-\gamma)_{p-q-r}=:(\epsilon^2)_{m_2}.\\
  \end{array}\right.
  \end{equation}
  The constraint $p+q+r=n$ is equivalent to
  $m_0\equiv m_1\equiv m_2\equiv n$ and $m_0+m_1-m_2=n$.

  For each $i$, we can't have $\delta_n=\epsilon^i_{m_i}$ for every
  $m_i\equiv n$, by Lemma~\ref{lemNota*a*a}.
  So for any permutation $\{i,j,k\}=\{0,1,2\}$, we can find $m_j$ and $m_k$
  such that $\epsilon^j_{m_j},\epsilon^k_{m_k}\neq\delta_n$.
  Choosing $m_i$ such that $m_0+m_1-m_2=n$, by (\ref{eqnEpsilon})
  we must have $\epsilon^i_{m_i}=\delta_n$.

  So each of the strings $\epsilon^i$ contains $\delta_n$, so (by
  Lemma~\ref{lemString}) each of these strings is obtained from $\delta$ by
  some translation and perhaps reversal.
  So if $\delta_{n+2},\delta_{n+4}\neq\delta_n$, or
  $\delta_{n-4},\delta_{n-2}\neq\delta_n$,
  we can find $m_0$ such that
  $\epsilon^0_{m_0},\epsilon^0_{m_0+2}\neq\delta_n$, and similarly $m_1$ such
  that $\epsilon^1_{m_1},\epsilon^1_{m_1+2}\neq\delta_n$.
  But then setting $m_2=m_0+m_1-n$, by (\ref{eqnEpsilon}) we must have
  $\epsilon^2_{m_2}=\epsilon^2_{m_2+2}=\epsilon^2_{m_2+4}=\delta_n$,
  contradicting Lemma~\ref{lemNota*a*a}.  This proves the Claim.
\end{proof}

  We can now complete the proof of Theorem~\ref{thmNon4torsion}.
  Suppose there is an $n$ such that $\delta_n\neq\delta'_n$.
  Then we can apply the Claim above.  Lemma~\ref{lemNota*a*a} excludes the case
  $\delta_{n-2}=\delta_n=\delta_{n+2}$, so we have either
  $\delta_n=\delta_{n+4}\neq\delta_{n+2}$ or
  $\delta_n=\delta_{n-4}\neq\delta_{n-2}$.
  We treat the former case; the latter is similar (and may be reduced to the
  former by applying $\rho$ to every string in question).
  
  By Lemma~\ref{lem2step}, we have $\delta_{n+2}\ 2g\to\delta_n\ \delta_n$ but
  $\delta'_{n+2}\ 2g\to\delta'_n\ \delta'_{n+4}$, so if
  $\delta'_n\neq\delta_n$ we can't have $\delta'_{n+2}=\delta_{n+2}$.

  But now the Claim above applies also to $\delta_{n+2}$, and since
  $\delta_n,\delta_{n+4}\neq\delta_{n+2}$, we must have
  $\delta_{n-2}=\delta_{n+2}=\delta_{n+6}$.

  Since $\delta_{n+2}\ 2g\to\delta_n\ \delta_n$ but $4g\neq0$, by
  \lem{abcc}, $2\delta_{n+2}=0$.
  Similarly, since $\delta_n\ 2g\to\delta_{n+2}\ \delta_{n+2}$, we have
  $2\delta_n=0$.

  But an application of \ax{2ishom} gives $2\delta_{n+1}\ 2g\to2\delta_n\
  2\delta_{n+2}=0\ 0$, so $4\delta_{n+1}=4g=0$, a contradiction.
  This completes the proof that $+$ is associative.
\end{proof}

\section{4-torsion Kummer structures}\label{sec4torsion}

In the previous section we have shown that every Kummer structure containing
an element $g$ with $4g\neq0$ is the Kummer of a group.
Here we turn to a different method to analyse the structure of a
\emph{4-torsion} Kummer structure -- \ie, one in which $4g=0$ for all elements
$g$.  So throughout this section $K$ shall be a 4-torsion Kummer structure.

To begin with, in section~\ref{sec2torsion} we showed that $K[2]$, the
2-torsion of $K$, is a 2-torsion abelian group.
For all $a\in K$, we have assumed $4a=0$, or equivalently $2a\in K[2]$.
So $2K$, the image of $2$, is contained in $K[2]$, and it is easy to show
(using \lem{20=0} and \ax{2ishom}) that $2K$ is a subgroup of $K[2]$.

If $a\in K$ and $x\in K[2]$, we can define $a+x$ by the rule
$a\ x\to(a+x)\ (a+x)$.

\begin{lemma}\label{lemAction}
  This extends the group operation in $K[2]$ to an action of $K[2]$ on $K$, and
  if $a\ b\to c\ d$ then $a\ (b+x)\to (c+x)\ (d+x)$.
  For any $a\in K$, $x\in K[2]$, we have $2(a+x)=2a$.
  If $a,b\in K$ with $2a=2b$ then we can write $b=a+x$ for $x\in K[2]$, and $x$
  is unique modulo $2a$.
\end{lemma}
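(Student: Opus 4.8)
The plan is to verify the four assertions of Lemma~\ref{lemAction} in turn, leaning
throughout on \ax{2ishom}, \lem{abcc}, \lem{ab0*}, and the definition of $+$ on $K[2]$
from Section~\ref{sec2torsion}. First I would check that the rule $a\ x\to(a+x)\ (a+x)$
makes sense for $a\in K$, $x\in K[2]$: since $2a\cdot 2x=2a\cdot 0$, \lem{abcc} tells us
that the two values at $\{a,x\}$ coincide, so $a+x$ is indeed well defined. When $a\in K[2]$
as well, this is exactly the definition of $+$ already given on $K[2]$, so the operation
extends the group law. To see that it is an \emph{action}, I would show $(a+x)+y=a+(x+y)$
for $x,y\in K[2]$ and $a+0=a$. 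The latter is immediate from \ax{id}. For the former I would
run the relevant instance of \ax{assoc} with the triple $a,x,y$, much as in the proof of
Lemma~\ref{lem2torsion}: the bottom two rows will read $a\ (x+y)\to d\ d$ and the right
column will give $(a+x)\ y\to d\ d$, forcing $(a+x)+y=d=a+(x+y)$; I would also need the
values in the middle column to collapse to a doubleton $\{a+x,a+x\}$, which again follows
from \lem{abcc} since $2x=0$.

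Next, the compatibility statement: if $a\ b\to c\ d$ then $a\ (b+x)\to(c+x)\ (d+x)$.
Here I would feed the triple $a,b,x$ (in a suitable order) into \ax{assoc}. Taking
the top row to be $x$ and starting from $a\ b\to c\ d$, the two downward arrows give
$x\ c\to c\ (c+x)$ and $x\ d\to d\ (d+x)$ (using \lem{abcc} to know each of these pairs
has a repeated entry, namely that $x\ c\to c\ \star$ has its other value equal to $c+x$,
etc.), and then the bottom two rows, which have $a\ (b+x)$ on the left since
$b\ x\to (b+x)\ (b+x)$, read off $\{c+x,d+x\}$ as the value set at $\{a,b+x\}$. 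A little
care is needed to match up which entry is which across the two rows, but the linear/diamond
bookkeeping is the same as in \lem{pqrs} and Lemma~\ref{diamondrule}, so I expect no real
difficulty. As a special case, taking $b=a$ and using \ax{aa02a} gives $a\ (a+x)\to 0\ 2a$,
whence $2(a+x)=2a$ by the uniqueness of the map $2$; alternatively one applies \ax{2ishom}
to $a\ x\to(a+x)\ (a+x)$ to get $2a\ 0\to 2(a+x)\ 2(a+x)$, and \ax{id} gives $2(a+x)=2a$
directly. That is cleaner, so I would use that.

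For the last assertion, suppose $2a=2b$. By \lem{cd2a2b} applied to a string through
$a$ and $g$ — or more simply, by \lem{abcc} — one has $a\ b\to c\ c$ for some $c$: indeed
$a\ b\to e\ f\to 2a\ 2b$ by \lem{cd2a2b}, and $2a=2b$ forces (via \lem{abcc}, since the
relation $\to$ here has a repeated target) one of $2e,2f$ to vanish; reusing \lem{abcc} on
$a\ b\to e\ f$ then gives $e=f$. Wait—more carefully: from $a\ b\to e\ f$ and \lem{cd2a2b},
$e\ f\to 2a\ 2b=2a\ 2a$, so by \lem{ab0*} applied after \lem{cd2a2b}'s symmetric form, or
directly by \lem{abcc}, $e=f$ exactly when $2a=0$ or $2b=0$, which need not hold. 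So instead
I argue: we want $x\in K[2]$ with $a+x=b$, i.e. $a\ x\to b\ b$. By \lem{abc*}, this is
equivalent to $a\ b\to x\ \star$ with $2x=0$. Now $a\ b\to e\ f$ for some $e,f$, and
\lem{cd2a2b} gives $e\ f\to 2a\ 2b=2a\ 2a$; picking whichever of $e,f$ — call it $x$ — has
$2x=0$ (one of them does, by \lem{abcc} applied to $e\ f\to 2a\ 2a$), we get $a\ b\to x\ \star$
with $x\in K[2]$, hence $b=a+x$. For uniqueness: if $a+x=a+x'=b$ then $x,x'\in K[2]$ and
$a\ x\to b\ b$, $a\ x'\to b\ b$; by \lem{abc*}, $b\ x\to a\ \star$ and $b\ x'\to a\ \star$,
and since a $g$-string through $b$ and $x$ contains both $a$'s, the pair $\{x,x'\}$ must be
compatible in a way that pins down their difference. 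Concretely: applying the already-proved
compatibility statement to $a\ x\to b\ b$ with the $K[2]$-element $x'$ gives
$a\ (x+x')\to (b+x')\ (b+x')$; but $b+x'=a+x+x'$, and also $b=a+x=a+x'$ forces
$a+(x+x')=a$ after cancelling — more precisely $2(x+x')=0$ and $a\ (x+x')\to a\ a$ would say
$x+x'$ acts trivially on $a$, which by the action property means $x+x'\in K[2]$ stabilises
$a$. The stabiliser of $a$ under the $K[2]$-action is a subgroup; I would finish by
identifying it with $2a\cdot\{0\}$, i.e. showing $x+x'$ stabilises $a$ iff $x+x'\in\{0,2a\}$
— wait, $2a\in K[2]$ here since $K$ is 4-torsion, so ``$x$ unique modulo $2a$'' means
$x+x'\in\{0,2a\}$, equivalently the orbit-stabiliser count. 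The one genuine point to nail
down, and what I expect to be the main obstacle, is precisely that the stabiliser in $K[2]$
of any $a\in K$ is exactly $\{0,2a\}$: this is where ``unique modulo $2a$'' comes from. For
that I would argue that $y\in K[2]$ stabilises $a$ (i.e. $a\ y\to a\ a$) iff $a\ a\to y\ \star$
(by \lem{abc*}) iff $y\in\{0,2a\}$ (by \ax{aa02a}, which says the value set at $\{a,a\}$ is
exactly $\{0,2a\}$). That closes the argument; everything else is the routine \ax{assoc}
bookkeeping above.
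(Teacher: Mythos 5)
Your proposal is essentially correct, and for most of the lemma it follows the same route as the paper: the action axioms and the compatibility statement come from the same two instances of \ax{assoc} (with $x$, resp.\ $y$, as the top element, each column collapsing to a repeated pair because the top element is $2$-torsion), and $2(a+x)=2a$ comes from applying \ax{2ishom} to $a\ x\to(a+x)\ (a+x)$ — your second, ``cleaner'' derivation is exactly the paper's; your first suggestion (that $a\ (a+x)\to 0\ 2a$) is false as stated, since compatibility actually gives $a\ (a+x)\to x\ (2a+x)$, so it is good that you discarded it. For the last claim your existence argument matches the paper's in substance (the paper gets $2e=2f=0$ for \emph{both} values at $\{a,b\}$ directly from \ax{2ishom} and $4a=0$, whereas you only extract one $2$-torsion value, which suffices). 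Your uniqueness argument, however, is genuinely different and arguably tidier: the paper observes that any $z$ with $b=a+z$ must satisfy $a\ b\to z\ \star$, so $z$ is one of the two values $x,y$ at $\{a,b\}$, and $x+y=2a$ by \lem{cd2a2b}; you instead show that $x+x'$ lies in the stabiliser of $a$ under the $K[2]$-action and compute that stabiliser to be $\{0,2a\}$ via \lem{abc*} and \ax{aa02a}. Both are correct; the stabiliser computation is a reusable fact the paper never isolates. One slip to fix in the write-up of the compatibility step: the downward arrows give $x\ c\to(c+x)\ (c+x)$ and $x\ d\to(d+x)\ (d+x)$ (repeated pairs, by \lem{abcc}), not ``$x\ c\to c\ (c+x)$'' — as literally written that would contradict the repeated-entry observation in your own parenthetical, though the value set you then read off for $\{a,b+x\}$ is the correct one.
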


\begin{proof}
  That $+$ is a group action means that $a+0=a$ and $a+(x+y)=(a+x)+y$.
  The former follows from \ax{id}, while the latter and the statement $a\ b\to
  c\ d \implies a\ (b+x)\to (c+x)\ (d+x)$ follow from \ax{assoc}:
  $$
  \begin{array}{*5c@{}l}
    &y&&y&y\\
    a&x&\to&(a+x)&(a+x)\\
    &\downarrow&&\downarrow&\downarrow\\
    a&(x+y)&\to&(a+x)+y&(a+x)+y\\
    a&(x+y)&\to&(a+x)+y&(a+x)+y&;\\
  \end{array}\quad\:
  \begin{array}{*5c@{}l}
    &x&&x&x\\
    a&b&\to&c&d\\
    &\downarrow&&\downarrow&\downarrow\\
    a&(b+x)&\to&(c+x)&(d+x)\\
    a&(b+x)&\to&(c+x)&(d+x)&.\\
  \end{array}
  $$
  By \ax{2ishom} we have $2a\ 0\to2(a+x)\ 2(a+x)$, so
  $2(a+x)=2a$.
  For $a,b$ with $2a=2b$, we know $2a\ 2b\to 0\ 0$, so if $a\ b\to x\ y$ then
  (by \ax{2ishom}) $2x=2y=0$, and (by \lem{abc*}) $a\ x\to b\ \star$, so
  $b=a+x$.
  Also $b=a+y$, but (by \lem{cd2a2b}) $x+y=2a$, so $x\equiv y\pmod{2a}$.
  Any $z$ with $b=a+z$ satisfies $a\ z\to b\ \star$, so $a\ b\to z\ \star$, so
  $z=x$ or $y$.
\end{proof}

Thus if for each $x\in 2K$ we pick a representative $e_x\in K$ with $2e_x=x$,
then every element of $a\in K$ can be written as $a=e_x+u$ with $x\in 2K$,
$u\in K[2]$; given $a$, $x=2a$ is unique and $u$ unique modulo $x$.

\begin{lemma}
  Given a choice of $\{e_x\}$, there are elements $\epsilon_{x,y}\in K[2]$
  for each $x,y\in 2K$, such that the structure map of $K$ is given by
  $$
  \begin{array}{*5c@{}l}
    (e_x+u)&(e_y+v)&\to&(e_{x+y}+u+v+\epsilon_{x,y})
    &(e_{x+y}+u+v+\epsilon_{x,y}+x)&.\\
  \end{array}
  $$
  $\epsilon_{x,y}$ is well-defined modulo $\langle x,y\rangle$.
\end{lemma}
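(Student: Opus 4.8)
The plan is to use the structural facts already established to pin down the two values of $\kappa\{e_x+u,\,e_y+v\}$ node by node, and then show the "defect'' is independent of $u$ and $v$. First I would apply the structure map to the pair $\{e_x,e_y\}$: since $2e_x=x$ and $2e_y=y$, \ax{2ishom} gives $\{2e_x,2e_y\}=\{x,y\}\stackrel\kappa\longmapsto\{x+y,x-y\}=\{x+y,x+y\}$ (as $x,y\in 2K\subseteq K[2]$, so $2x=0=2y$), hence by \lem{abcc} the two values $c,d$ of $\kappa\{e_x,e_y\}$ satisfy $c=d$ \emph{only if} one of $x,y$ vanishes; in general $\{c,d\}\stackrel\kappa\longmapsto\{x,y\}$ by \lem{cd2a2b}, so by \lem{ab0*} and \lem{abc*} we get $c\ e_x\to d\ \star$ with $2c=2d=x+y$ and $c+d$ lying in $\langle x,y\rangle$. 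Thus we may write the values as $e_{x+y}+w$ and $e_{x+y}+w+x$ for a suitable $w\in K[2]$ (using Lemma~\ref{lemAction}, since both values have the same $2$, namely $x+y$, and differ by an element of $K[2]$ whose double is $x$). Define $\epsilon_{x,y}:=w$ (for this particular pair of representatives).

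Next I would propagate to general $u,v$ using the action. By Lemma~\ref{lemAction}, $a\ b\to c\ d$ implies $a\ (b+v)\to (c+v)\ (d+v)$, and symmetrically (via \lem{abc*} or a second application) $(a+u)\ (b+v)\to (c+u+v)\ (d+u+v)$. Applying this to $a=e_x$, $b=e_y$, $c=e_{x+y}+\epsilon_{x,y}$, $d=e_{x+y}+\epsilon_{x,y}+x$ yields exactly
$$
\begin{array}{*5c@{}l}
  (e_x+u)&(e_y+v)&\to&(e_{x+y}+u+v+\epsilon_{x,y})
  &(e_{x+y}+u+v+\epsilon_{x,y}+x)&,\\
\end{array}
$$
which is the claimed formula. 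One should check the two values are genuinely the pair $\{c+u+v,\,d+u+v\}$ as an \emph{unordered} pair, but that is immediate since the action permutes value sets.

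Finally I would address well-definedness modulo $\langle x,y\rangle$. The choice ambiguity in $\epsilon_{x,y}$ comes from two sources. First, the decomposition $a=e_x+u$ determines $u$ only modulo $x$ (Lemma~\ref{lemAction}), so replacing $u$ by $u+x$ and $v$ by $v+y$ changes $u+v+\epsilon_{x,y}$ by an element of $\langle x,y\rangle$ while the two output values (as a set) are unchanged — forcing $\epsilon_{x,y}$ to shift by an element of $\langle x,y\rangle$. Second, the output value $e_{x+y}+u+v+\epsilon_{x,y}$ is itself only specified modulo $x+y$ (as $e_{x+y}$ is a chosen representative and the decomposition of the value is unique only mod $x+y$), and $x+y\in\langle x,y\rangle$. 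Combining, $\epsilon_{x,y}$ is determined precisely modulo $\langle x,y\rangle$. The main obstacle I anticipate is the bookkeeping in this last paragraph: one must be careful that "the formula holds for all $u,v$'' is compatible with "$\epsilon_{x,y}$ is well-defined mod $\langle x,y\rangle$'' — i.e.\ that changing $\epsilon_{x,y}$ within its coset can always be absorbed by re-choosing the representatives of the outputs — rather than imposing a stronger constraint. This should follow cleanly from the uniqueness clauses in Lemma~\ref{lemAction}, but it is the step that needs the most care.
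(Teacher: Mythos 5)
Your proposal follows the paper's proof essentially step for step: define $\epsilon_{x,y}$ from $e_x\ e_y\to(e_{x+y}+\epsilon_{x,y})\ \star$ using \ax{2ishom} to get that the values have double $x+y$, use \lem{cd2a2b} and \lem{abc*} to see the second value is the first plus the action of $x$, propagate to general $u,v$ by two applications of Lemma~\ref{lemAction}, and account for the ambiguity as $\langle x,x+y\rangle=\langle x,y\rangle$. Apart from a few notational slips (``$c\ e_x\to d\ \star$'' should read $c\ x\to d\ \star$; the two values differ by the action of $x$ itself, an element of $K[2]$, not of ``an element whose double is $x$''; and the $\langle x,y\rangle$-ambiguity is cleaner to attribute to the choice of which output value is named first together with the mod-$(x+y)$ decomposition, rather than to the ambiguity in $u$ and $v$), this is correct and is the same argument.
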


\begin{proof}
  Let $e_x\ e_y\to a\ \star$.
  Then (by \ax{2ishom}) $2e_x\ 2e_y\to 2a\ \star$.  But
  $2e_x=x$ and $2e_y=y$, so $2a=x+y$.
  So $a=e_{x+y}+\epsilon$ for some $\epsilon\in K[2]$, defined (given the
  choice of $a$) modulo $(x+y)$.

  Now suppose $e_x\ e_y\to a\ b$.
  By \lem{cd2a2b}, $a\ b\to x\ y$, so $a\ x\to b\ \star$, so
  $b=a+x=e_{x,y}+\epsilon+x$.
  Thus choosing $b$ instead of $a$ changes $\epsilon$ by $x$, so
  overall the equation $e_x\ e_y\to (e_{x+y}+\epsilon_{x,y})\ \star$ defines
  $\epsilon_{x,y}$ modulo $\langle x,y\rangle$.

  Using this as the definition of $\epsilon_{x,y}$, two applications of
  Lemma~\ref{lemAction} give the full structure map of $K$ as in the lemma.
\end{proof}

A 4-torsion Kummer structure is thus determined up to isomorphism by the pair
of groups $2K\subseteq K[2]$ and the pairing
$(x,y)\in(2K)^2\mapsto\epsilon_{x,y}\in K[2]/\langle x,y\rangle$.
But the pairing $\epsilon_{\cdot,\cdot}$ is not canonical, since it
depends on the choice of representatives $\{e_x\}$.  We shall investigate its
properties and try to find a standard form into which we can put
$\epsilon_{\cdot,\cdot}$.  To begin with, we can take $e_0=0$, and from now
on we shall do so.  $\epsilon_{x,y}$ is only defined modulo $\langle
x,y\rangle$, so we take $\epsilon_{x,y}=z$ to mean the same as
$\epsilon_{x,y}\equiv z\pmod{\langle x,y\rangle}$.

\begin{lemma}\label{lemTetragon}
  For any $x,y,z\in 2K$,
  $$
    \epsilon_{x,y}+\epsilon_{x+y,z}\equiv\epsilon_{x,z}+\epsilon_{x+z,y}
    \pmod{\langle x,y,z\rangle}.
  $$
\end{lemma}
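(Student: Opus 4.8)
The plan is to extract this identity from a single application of \ax{assoc} to the chosen representatives $e_x,e_y,e_z$, and then to symmetrise. Two preliminary observations will be used. First, since $\kappa$ is symmetric, $e_x\ e_y$ and $e_y\ e_x$ produce the same unordered pair, so $\{\epsilon_{x,y},\epsilon_{x,y}+x\}$ and $\{\epsilon_{y,x},\epsilon_{y,x}+y\}$ agree modulo $\langle x,y\rangle$, whence $\epsilon_{x,y}\equiv\epsilon_{y,x}\pmod{\langle x,y\rangle}$. Second, the structure map of the preceding lemma, specialised to $u=0$, reads $e_x\ (e_y+w)\to(e_{x+y}+w+\epsilon_{x,y})\ (e_{x+y}+w+\epsilon_{x,y}+x)$ for any $w\in K[2]$; this is the computation I will apply with the indices permuted.

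Now write down \ax{assoc} with $a=e_x$, $b=e_y$, $c=e_z$:
$$
\begin{array}{*5c@{}l}
  &e_z&&e_z&e_z\\
  e_x&e_y&\to&p_0&p_1\\
  &\downarrow&&\downarrow&\downarrow\\
  e_x&q_0&\to&s_0&s_1\\
  e_x&q_1&\to&s_2&s_3&.\\
\end{array}
$$
By the structure map, after renaming $p_0,p_1$ and $q_0,q_1$ freely we have $p_i\equiv e_{x+y}+\epsilon_{x,y}$ and $q_i\equiv e_{y+z}+\epsilon_{y,z}$ modulo $\langle x,y,z\rangle$ (the ``$+x$'' and ``$+z$'' ambiguities in the second entry of each output pair vanish on passing to $\langle x,y,z\rangle$). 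Computing the pair $\{s_0,s_1\}$ from the downward arrow $e_z\ p_0$ gives $s_0\equiv e_{x+y+z}+\epsilon_{x,y}+\epsilon_{z,x+y}\pmod{\langle x,y,z\rangle}$, while computing the same pair from $e_x\ q_0$ gives $s_0\equiv e_{x+y+z}+\epsilon_{y,z}+\epsilon_{x,y+z}\pmod{\langle x,y,z\rangle}$. Equating these, and using $\epsilon_{z,x+y}\equiv\epsilon_{x+y,z}$ (legitimate since $\langle z,x+y\rangle\subseteq\langle x,y,z\rangle$), yields the cocycle relation
$$
\epsilon_{x,y}+\epsilon_{x+y,z}\;\equiv\;\epsilon_{y,z}+\epsilon_{x,y+z}\pmod{\langle x,y,z\rangle}.
$$

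Finally, applying this cocycle relation a second time with $y$ and $z$ interchanged — using $\langle x,z,y\rangle=\langle x,y,z\rangle$ — gives $\epsilon_{x,z}+\epsilon_{x+z,y}\equiv\epsilon_{z,y}+\epsilon_{x,z+y}\pmod{\langle x,y,z\rangle}$, and since $\epsilon_{z,y}\equiv\epsilon_{y,z}$ and $\epsilon_{x,z+y}=\epsilon_{x,y+z}$ its right-hand side is $\epsilon_{y,z}+\epsilon_{x,y+z}$, exactly the right-hand side of the displayed cocycle relation. Hence $\epsilon_{x,y}+\epsilon_{x+y,z}\equiv\epsilon_{x,z}+\epsilon_{x+z,y}\pmod{\langle x,y,z\rangle}$, as required. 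The only point requiring care is the bookkeeping of ambiguities: $\epsilon_{\cdot,\cdot}$ is defined only modulo a subgroup depending on its arguments, so one must reduce modulo the fixed group $\langle x,y,z\rangle$ before comparing any two expressions and check that every subgroup that arises (such as $\langle z,x+y\rangle$) is contained in it — which it is. I anticipate no other obstacle.
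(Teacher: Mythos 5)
Your proof is correct and uses essentially the paper's argument: a single instance of \ax{assoc} applied to the representatives $e_x,e_y,e_z$, with the doubly-determined corner $s_0$ computed two ways via the structure map and everything reduced into the fixed group $\langle x,y,z\rangle$. The paper merely assigns the roles differently (taking $b=e_x$, so that $x$ is combined first with each of $y$ and $z$), which produces the two sides of the stated congruence from one diagram; your assignment produces the associativity-cocycle form $\epsilon_{x,y}+\epsilon_{x+y,z}\equiv\epsilon_{y,z}+\epsilon_{x,y+z}$ and so needs the extra (valid) symmetrization step.
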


\begin{proof}
  Define $\delta\in K[2]$ by
  $$
  \begin{array}{*5c@{}l}
    &e_z&&e_z&e_z\\
    e_y&e_x&\to&(e_{x+y}+\epsilon_{x,y})&\star\\
    &\downarrow&&\downarrow&\downarrow\\
    e_y&(e_{x+z}+\epsilon_{x,z})&\to&(e_{x+y+z}+\delta)&\star\\
    e_y&\star&\to&\star&\star&.
  \end{array}
  $$
  The vertical arrow gives
  $\delta\equiv\epsilon_{x,y}+\epsilon_{x+y,z}\pmod{\langle x+y,z\rangle}$ and
  the horizontal arrow gives
  $\delta\equiv\epsilon_{x,z}+\epsilon_{x+z,y}\pmod{\langle x+z,y\rangle}$.
  Putting these together gives the lemma.
\end{proof}

\begin{lemma}\label{lemRank2}
  $\epsilon_{x,y}\in K[2]/\langle x,y\rangle$ depends only on the subgroup
  $\langle x,y\rangle\subseteq 2K$, not on the choice of generators $x,y$.
  If $\rank\langle x,y\rangle<2$ then $\epsilon_{x,y}=0$.
\end{lemma}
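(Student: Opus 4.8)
The statement has two parts: (i) $\epsilon_{x,y}$ depends only on the subgroup $\langle x,y\rangle$, and (ii) if $\rank\langle x,y\rangle<2$ then $\epsilon_{x,y}=0$. The natural order is to do part (ii) first, as it is needed to handle the degenerate cases of the change-of-generators argument in part (i). For part (ii), $\rank\langle x,y\rangle<2$ breaks into: $x=y=0$, where $\epsilon_{0,0}=0$ since we chose $e_0=0$ and \ax{id} gives $e_0\ e_0=0\ 0\to 0\ 0$; $x=0$, $y\neq0$ (and symmetrically $y=0$, $x\neq0$), where we compute $e_0\ e_y=0\ e_y\to e_y\ e_y$ by \ax{id}, so $\epsilon_{0,y}\equiv0\pmod{\langle y\rangle}$; and $x=y\neq0$, where $e_x\ e_x\to 0\ 2e_x=0\ x$ by \ax{aa02a}, so $e_x\ e_x\to(e_0+0)\ (e_0+x)=(e_{x+x}+0)\ \star$ (using $x+x=0$), whence $\epsilon_{x,x}\equiv0\pmod{\langle x\rangle}$. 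In each case the defining congruence is modulo $\langle x,y\rangle$ itself, so this suffices.

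For part (i), I must show that if $\langle x,y\rangle=\langle x',y'\rangle=:H$ then $\epsilon_{x,y}\equiv\epsilon_{x',y'}\pmod H$. If $\rank H<2$ this is immediate from part (ii) (both sides are $0$), so assume $\rank H=2$, i.e.\ $H\cong(\ZZ/2)^2$ with three nonzero elements $x,y,x+y$. The group of ordered generating pairs of $H$ is acted on by $S_3$ permuting $\{x,y,x+y\}$; this is generated by the transposition $(x,y)\mapsto(y,x)$ and the $3$-cycle $(x,y)\mapsto(y,x+y)$ (or $(x,y)\mapsto(x+y,x)$). So it suffices to check invariance of $\epsilon$ under these two moves. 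Swap invariance $\epsilon_{x,y}\equiv\epsilon_{y,x}$ follows because the structure map $\kappa\{\-a,\-b\}$ is symmetric in $\-a,\-b$: applying the formula of the previous lemma to $(e_x)(e_y)$ and to $(e_y)(e_x)$ gives the same value set, whose first coordinates are $e_{x+y}+\epsilon_{x,y}$ and $e_{x+y}+\epsilon_{y,x}$ respectively, so these differ by an element of $\langle x,y\rangle$. For the $3$-cycle, I would feed a suitable special case into Lemma~\ref{lemTetragon}: setting $z=y$ there gives $\epsilon_{x,y}+\epsilon_{x+y,y}\equiv\epsilon_{x,y}+\epsilon_{x+y,y}$ trivially, so instead set $z=x$, obtaining $\epsilon_{x,y}+\epsilon_{x+y,x}\equiv\epsilon_{x,x}+\epsilon_{0,y}\pmod{\langle x,y\rangle}$; by part (ii) the right side is $0$, so $\epsilon_{x+y,x}\equiv-\epsilon_{x,y}=\epsilon_{x,y}$ (everything is $2$-torsion), which together with swap invariance gives invariance under the full $S_3$.

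**Main obstacle.** The one point needing care is bookkeeping of the moduli: $\epsilon_{x,y}$ is a priori only defined modulo $\langle x,y\rangle$, and in the relations produced by Lemma~\ref{lemTetragon} the ambient modulus $\langle x,y,z\rangle$ coincides with $\langle x,y\rangle$ exactly when $z\in\langle x,y\rangle$ — which is precisely the situation we are in when reducing the $3$-cycle move ($z=x$). So the specialization of Lemma~\ref{lemTetragon} really does land in $K[2]/\langle x,y\rangle=K[2]/H$, and the conclusion is clean. Once invariance under the generating moves is in hand, a one-line remark that $S_3$ is generated by a transposition and a $3$-cycle finishes part (i), and combining with part (ii) completes the lemma.
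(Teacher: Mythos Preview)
Your proof is correct and follows essentially the same strategy as the paper's: establish $\epsilon_{x,x}=0$, specialize Lemma~\ref{lemTetragon} to a triple lying in $\langle x,y\rangle$, and use that the resulting relation together with the obvious symmetry $\epsilon_{x,y}=\epsilon_{y,x}$ generates $\GL_2(\ZZ/2)\cong S_3$. The only cosmetic difference is the order: the paper first proves invariance (substituting $z=x+y$, which needs only $\epsilon_{w,w}=0$) and then reads off the rank~$<2$ case as the corollary $\epsilon_{x,y}=\epsilon_{z,z}=0$, whereas you prove the rank~$<2$ cases directly and then substitute $z=x$ (which also requires your $\epsilon_{0,y}=0$).
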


\begin{proof}
  \ax{aa02a}${} \implies e_x\ e_x\to 0\ x$.
  By definition, $e_x\ e_x\to (e_0+\epsilon_{x,x})\ \star$,
  so $\epsilon_{x,x}=e_0=0$.

  Now let $z=x+y$: Lemma~\ref{lemTetragon} becomes
  $\epsilon_{x,y}+\epsilon_{z,z}\equiv\epsilon_{x,z}+\epsilon_{y,y}
  \pmod{\langle x,y\rangle}$.  Since
  $\epsilon_{z,z}\equiv 0\equiv\epsilon_{y,y}\pmod{\langle x,y\rangle}$,
  we have $\epsilon_{x,y}\equiv\epsilon_{x,z}\pmod{\langle x,y\rangle}$;
  that is, $\epsilon_{x,y}=\epsilon_{x,z}$.
  This, together with the obvious symmetry
  $\epsilon_{x,y}=\epsilon_{y,x}$, shows that $\epsilon_{x,y}$ depends
  only on $\langle x,y\rangle$ (since these two symmetries generate the
  group $\GL_2(\ZZ/2)$).

  If $\rank\langle x,y\rangle<2$ then $\langle x,y\rangle$ is generated by a
  single element $z$, and $\epsilon_{x,y}=\epsilon_{z,z}=0$.
\end{proof}

In the light of the last lemma, if $V=\langle x,y\rangle$ then we shall use
the notation $\epsilon_V=\epsilon_{x,y}$.

\begin{lemma}\label{lemStandardform}
  Let $\{x_1,\dots,x_n\}$ be a basis for a rank-$n$ subgroup $X\subseteq 2K$.
  Call two elements of this subgroup \emph{disjoint} if they are sums of
  disjoint subsets of the basis.
  Then we can choose $\{e_x\}$ such that when $a,b\in X\setminus 0$ are
  disjoint, $\epsilon_{a,b}=0$, and if $a,b,c\in X\setminus0$ are disjoint,
  $\epsilon_{a+b,a+c}\equiv0\pmod{\langle a,b,c\rangle}$.
\end{lemma}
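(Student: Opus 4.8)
The goal is to exploit the freedom to re-choose each representative $e_x$, for $x \in X \setminus 0$, by replacing $e_x$ with $e_x + t_x$ for some $t_x \in K[2]$ to be determined. Under such a substitution the pairing changes by a controllable coboundary term: if $e_x \mapsto e_x + t_x$, then from the defining relation $e_x\ e_y \to (e_{x+y}+\epsilon_{x,y})\ \star$ and the action of $K[2]$ (Lemma~\ref{lemAction}), $\epsilon_{x,y}$ is replaced by $\epsilon_{x,y} + t_x + t_y + t_{x+y}$ modulo $\langle x,y\rangle$. So the task reduces to: choose a function $t\colon X\setminus 0 \to K[2]$ so that $\epsilon_{x,y} \equiv t_x + t_y + t_{x+y}$ whenever $x,y$ are disjoint, and so that the second (rank-3 disjoint) condition also holds. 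We work entirely inside the finite group $X$ and the target $K[2]$, using Lemma~\ref{lemTetragon} (the cocycle identity for $\epsilon$) as the key structural input.

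The plan is to build $t_x$ inductively over a filtration of $X$ by the ``support size'' of elements with respect to the fixed basis $\{x_1,\dots,x_n\}$. First I would set $t_{x_i}$ (singletons) arbitrarily, say $t_{x_i}=0$. Then for an element $a$ with support of size $k\ge 2$, pick a splitting $a = b+c$ into two disjoint nonzero pieces of smaller support; I want $\epsilon_{b,c} \equiv t_b + t_c + t_a$, so this \emph{defines} $t_a$ (as an element of $K[2]$, lifting the class in $K[2]/\langle b,c\rangle$ — here one must check the ambiguity $\langle b,c\rangle$ can be absorbed, or more cleanly carry the whole argument in $K[2]/X$ or track the ambiguities explicitly). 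The well-definedness of $t_a$ — independence of the chosen splitting $a=b+c$ — is exactly where Lemma~\ref{lemTetragon} is used: given two splittings one compares them via a common refinement into three disjoint pieces and the tetragon identity forces agreement modulo the relevant subgroup. Once $t$ is defined consistently, the first conclusion $\epsilon_{a,b}=0$ for disjoint $a,b$ holds by construction for the splittings we used, and for arbitrary disjoint $a,b$ it follows again from Lemma~\ref{lemTetragon} applied to the pieces, together with Lemma~\ref{lemRank2} (which already gives $\epsilon=0$ in rank $<2$, handling degenerate overlaps of supports).

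For the second conclusion, once $\epsilon_{a,b}=0$ on all disjoint pairs, I would apply Lemma~\ref{lemTetragon} with the disjoint triple $a,b,c$: the identity $\epsilon_{a,b}+\epsilon_{a+b,c} \equiv \epsilon_{a,c}+\epsilon_{a+c,b} \pmod{\langle a,b,c\rangle}$ collapses, using $\epsilon_{a,b}=\epsilon_{a,c}=0$ (disjoint pairs) and Lemma~\ref{lemRank2} to rewrite $\epsilon_{a+b,c}=\epsilon_{\langle a+b,c\rangle}$, into a relation among the ``second-layer'' values that pins down $\epsilon_{a+b,a+c}$. More directly: one runs a second round of adjustment, now modifying $e_a$ for support-2 elements $a$ by further $K[2]$-translations without disturbing the already-achieved vanishing on disjoint pairs of \emph{lower} complexity, so as to kill $\epsilon_{a+b,a+c}$ for disjoint $a,b,c$; again Lemma~\ref{lemTetragon} guarantees the required consistency so that all the $\epsilon_{a+b,a+c}$ can be annihilated simultaneously.

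The main obstacle is bookkeeping the ambiguities: each $\epsilon_{x,y}$ lives only in $K[2]/\langle x,y\rangle$, the $t_x$ live in $K[2]$, and the inductive definition of $t_a$ must be shown independent of choices \emph{as an element of the right quotient} and compatible across the filtration. I expect the cleanest route is to prove the statement first modulo $X$ (i.e.\ work with classes in $K[2]/X$, where everything is an honest $1$-cochain and $\epsilon$ an honest $2$-cocycle by Lemma~\ref{lemTetragon}, so the construction is just ``a $2$-cocycle on the elementary abelian group $X$ is a coboundary once it vanishes on a basis-disjoint set'' — a finite linear-algebra fact), and then lift back, checking that the subgroup ambiguities $\langle x,y\rangle \subseteq X$ do not obstruct the lift because they are exactly what we quotiented by. The rank-3 disjoint condition is then an extra finite set of linear constraints that Lemma~\ref{lemTetragon} shows to be consistent, so it too can be solved.
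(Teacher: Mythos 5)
Your starting point is the right one---the freedom in the $\{e_x\}$ is exactly a coboundary $t_x+t_y+t_{x+y}$, and Lemma~\ref{lemTetragon} is the cocycle identity; this is also how the paper views it---but the two places where you defer the hard work are exactly where the content of the lemma lies, and neither deferral goes through as stated. First, working in $K[2]/X$ and ``lifting back'' cannot work: the conclusion $\epsilon_{a,b}=0$ means vanishing modulo $\langle a,b\rangle$, which for $\rank X\geq3$ is strictly finer than vanishing modulo $X$, so the subgroup ambiguities are \emph{not} ``exactly what we quotiented by''; recovering the finer statement from the coarser one is the whole difficulty. (Indeed the invariant $\ind_X$ defined just after this lemma is precisely an obstruction showing that one cannot in general kill all rank-2 $\epsilon_V$, so no soft ``this $2$-cocycle is a coboundary'' argument can succeed without tracking the moduli exactly.) Second, your inductive definition of $t_a$ requires $t_a\equiv\epsilon_{b,c}+t_b+t_c\pmod{\langle b,c\rangle}$ \emph{simultaneously} for every disjoint splitting $a=b+c$. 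These are cosets of several distinct subgroups: already for $|\mathrm{supp}(a)|=3$ they are the three hyperplanes of $\langle x_i,x_j,x_k\rangle$ through $a$, which meet pairwise in $\langle a\rangle$. Pairwise compatibility of cosets---which is all a single application of Lemma~\ref{lemTetragon} provides---does not imply a common solution for three or more cosets in an $\mathbb{F}_2$-vector space (three cosets of three hyperplanes through a common line can intersect pairwise yet have empty triple intersection). You assert this consistency but do not prove it, and it is the heart of the matter.

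The paper's proof is organised precisely to avoid ever solving more than two congruences at once: it inducts on the rank, adjoining one basis vector $x_n$ at a time; the choice of $e_{a+x_n}$ compatible with $\epsilon_{a,x_n}=0$ is two-fold, the two options differing by $x_n$, and this last bit of freedom is spent on the single further normalisation $\epsilon_{a+x_n,T'-a}=0$, where $T$ is the sum of the whole basis. Every other disjoint vanishing is then \emph{deduced} rather than imposed: two applications of Lemma~\ref{lemTetragon} give $\epsilon_{a,b}\equiv0$ both modulo $\langle a,b,x_n\rangle$ and modulo $\langle a,b,T\rangle$, and since $T\notin\langle a,b,x_n\rangle$ except when $a+b=T$ (the case covered by the extra normalisation), intersecting the two gives $\epsilon_{a,b}=0$ on the nose. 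For the second conclusion, the first version of your argument is essentially the paper's: once disjoint vanishing is known, one instance of Lemma~\ref{lemTetragon} together with Lemma~\ref{lemRank2} yields $\epsilon_{a+b,a+c}\equiv0\pmod{\langle a,b,c\rangle}$ directly. Your alternative ``second round of adjustment'' is both unnecessary and unjustified, since you give no reason why further translations of the $e_a$ would preserve the vanishing already achieved.
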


\begin{proof}
  We work by induction on $n$: for $n=0$ there is nothing to prove, so
  let $X'=\langle x_1,\dots,x_{n-1}\rangle$ and suppose we have
  already chosen $e_x$ for all $x\in X'$ satisfying the given conditions on
  $\epsilon_V$ for $V\subseteq X'$.  Define $T=x_1+\dots+x_n$ and $T'=T-x_n$.

  Choose $e_{x_n}$ arbitrarily (subject to $2e_{x_n}=x_n$).
  Then for each $a\in X'\setminus 0$, we can pick $e_{a+x_n}$ such that
  $e_a\ e_{x_n}\to e_{a+x_n}\ \star$, so that $\epsilon_{a,x_n}=0$.
  There are two such choices, differing by $x_n$.
  If also $a\neq T'$, then by Lemma~\ref{lemTetragon},
  $$
    \epsilon_{a,x_n}+\epsilon_{a+x_n,T'-a}\equiv
    \epsilon_{a,T'-a}+\epsilon_{T',x_n}
    \pmod{\langle a,T'-a,x_n\rangle}.
  $$
  By induction $\epsilon_{a,T'-a}=0$ and by the choices just made
  $\epsilon_{a,x_n}=0$ and $\epsilon_{T',x_n}=0$.
  So $\epsilon_{a+x_n,T'-a}\equiv0\pmod{\langle a+x_n,T'-a,x_n}$.
  By adding $x_n$ to $e_{a+x_n}$ if necessary, we can make
  $\epsilon_{a+x_n,T'-a}=0$.
  So we have now $\epsilon_{a,b}=0$ whenever $a\in X'$ and $b=x_n$ or
  when $a+b=T$.

  Now let $a,b\in X$ be disjoint.  If $a,b\in X'$, we know $\epsilon_{a,b}=0$
  by induction.
  So assume $a=a'+x_n$ with $a',b\in X'$. 
  Lemma~\ref{lemTetragon} says
  $$
    \epsilon_{a',x_n}+\epsilon_{a,b}\equiv
    \epsilon_{a',b}+\epsilon_{a'+b,x_n}
    \pmod{\langle a,b,x_n\rangle}.
  $$
  Three of these terms we know to be $0$,
  leaving $\epsilon_{a,b}\equiv0\pmod{\langle a,b,x_n\rangle}$.
  Also by Lemma~\ref{lemTetragon},
  $$
    \epsilon_{b,a}+\epsilon_{b+a,T-a-b}\equiv
    \epsilon_{b,T-a-b}+\epsilon_{T-a,a}
    \pmod{\langle a,b,T\rangle}.
  $$
  Again three of these terms we know to be $0$, leaving
  $\epsilon_{b,a}=\epsilon_{a,b}\equiv0\pmod{\langle a,b,T\rangle}$.
  If $T\notin \langle a,b,x_n\rangle$, we can put these together to get
  $\epsilon_{a,b}=0$.
  But if $T\in\langle a,b,x_n\rangle=\langle a',b,x_n\rangle$
  then $T=a'+b+x_n=a+b$,
  since $a'$, $b$, and $x_n$ are disjoint, and in this case $\epsilon_{a,b}=0$
  from the choices made in the previous paragraph.

  Finally let $a,b,c\in X$ be disjoint and non-zero.
  Once more Lemma~\ref{lemTetragon} says
  $$
    \epsilon_{a,b}+\epsilon_{a+b,a+c}\equiv
    \epsilon_{a,a+c}+\epsilon_{c,b}
    \pmod{\langle a,b,c\rangle},
  $$
  and $\epsilon_{a,b}=0$, $\epsilon_{a,a+c}=\epsilon_{a,c}=0$,
  $\epsilon_{c,b}=0$, leaving
  $\epsilon_{a+b,a+c}\equiv0\pmod{\langle a,b,c\rangle}$ as required.
\end{proof}

\begin{definition}
  Let $X=\langle x,y,z\rangle\subseteq 2K$ be a rank-3 subgroup.
  Define $\ind_X=0$ if there is a choice of $\{e_x\}$ such that
  $\epsilon_V=0$ for all rank-2 $V\subset X$;
  otherwise, define $\ind_X=1$.
\end{definition}

\begin{lemma}\label{lemIndX}
  Let $X=\langle a,b,c\rangle$ be rank 3,
  and let $\{e_x\}$ be chosen as allowed by Lemma~\ref{lemStandardform}
  so that $\epsilon_V=0$ for all $V\subset X$
  except possibly for $V=U:=\langle a+b,a+c\rangle$.
  Then $\epsilon_U=0\iff\ind_X=0$.
\end{lemma}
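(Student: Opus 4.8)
The plan is to prove the two directions separately; the forward one ($\epsilon_U=0\Rightarrow\ind_X=0$) is trivial, and the reverse one carries all the work.

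If $\epsilon_U=0$, then since by hypothesis $\epsilon_V=0$ for every rank-$2$ subgroup $V\subset X$ other than $U$, and since $\epsilon_V=0$ automatically whenever $\rank V<2$ (Lemma~\ref{lemRank2}), the chosen representatives $\{e_x\}$ already witness $\epsilon_V=0$ for \emph{every} rank-$2$ $V\subseteq X$; so $\ind_X=0$ by definition. For the converse, the first thing I would record is how the pairing responds to a change of representatives: if $e'_x=e_x+f_x$ with $f_x\in K[2]$ and $f_0=0$, then comparing the explicit form of the structure map of $K$ (the lemma preceding Lemma~\ref{lemTetragon}) written in the two systems of representatives yields the ``coboundary formula''
\[
  \epsilon'_{x,y}\equiv\epsilon_{x,y}+f_x+f_y+f_{x+y}\pmod{\langle x,y\rangle}.
\]

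So, assuming $\ind_X=0$, fix a choice $\{e'_x\}$ with $\epsilon'_V=0$ for all rank-$2$ $V\subseteq X$, and let $\{f_x\}$ be the resulting differences (using Lemma~\ref{lemAction} to write $e'_x=e_x+f_x$ with $f_x\in K[2]$, $f_0=0$). The six subgroups on which $\epsilon_V=0$ by hypothesis are $\langle a,b\rangle$, $\langle a,c\rangle$, $\langle b,c\rangle$, $\langle a,b+c\rangle$, $\langle b,a+c\rangle$, $\langle c,a+b\rangle$; feeding each into the coboundary formula turns $\epsilon'_V=0$ into an explicit membership statement: $u_1:=f_a+f_b+f_{a+b}\in\langle a,b\rangle$, and similarly $u_2:=f_a+f_c+f_{a+c}\in\langle a,c\rangle$ and $u_3:=f_b+f_c+f_{b+c}\in\langle b,c\rangle$, while, setting $w:=f_{a+b+c}+f_a+f_b+f_c$, one gets $w+u_3\in\langle a,b+c\rangle$, $w+u_2\in\langle b,a+c\rangle$, $w+u_1\in\langle c,a+b\rangle$. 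The same formula applied to $U=\langle a+b,a+c\rangle$ gives $\epsilon_U\equiv f_{a+b}+f_{a+c}+f_{b+c}=u_1+u_2+u_3\pmod U$, so everything comes down to showing $u_1+u_2+u_3\in U$.

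This last deduction is the crux, and the step I expect to be the main obstacle. The three conditions $u_i\in\langle\cdot\rangle$ alone give only $u_1+u_2+u_3\in\langle a,b\rangle+\langle a,c\rangle+\langle b,c\rangle=X$, which is not enough since $X/U$ has order $2$; one has to squeeze the remaining bit out of the three conditions on the auxiliary element $w$. Since $u_1,u_2,u_3$ and $w$ all lie in $X$ (for instance $w=u_3+(w+u_3)$), expand them in the basis $\{a,b,c\}$: $u_1=\lambda_1 a+\mu_1 b$, $u_2=\lambda_2 a+\nu_2 c$, $u_3=\mu_3 b+\nu_3 c$, $w=\lambda a+\mu b+\nu c$. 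Since $\langle a,b+c\rangle$ consists of the elements of $X$ whose $b$- and $c$-coordinates agree, the condition $w+u_3\in\langle a,b+c\rangle$ reads $\mu+\mu_3=\nu+\nu_3$; likewise $w+u_2\in\langle b,a+c\rangle$ reads $\lambda+\lambda_2=\nu+\nu_2$, and $w+u_1\in\langle c,a+b\rangle$ reads $\lambda+\lambda_1=\mu+\mu_1$. Adding these three $\ZZ/2$-equations, the $\lambda,\mu,\nu$ cancel in pairs and what survives is $\lambda_1+\lambda_2+\mu_1+\mu_3+\nu_2+\nu_3=0$; but $u_1+u_2+u_3=(\lambda_1+\lambda_2)a+(\mu_1+\mu_3)b+(\nu_2+\nu_3)c$, so this says exactly that $u_1+u_2+u_3$ has even weight, that is, lies in $U=\{0,a+b,a+c,b+c\}$, the even-weight subgroup of $X$ for this basis. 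Hence $\epsilon_U=0$, completing the reverse implication.
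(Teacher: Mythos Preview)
Your proof is correct and follows essentially the same route as the paper: trivial forward direction, then for the reverse direction the coboundary formula $\epsilon'_{x,y}\equiv\epsilon_{x,y}+f_x+f_y+f_{x+y}\pmod{\langle x,y\rangle}$, six membership constraints from the six subgroups $V\neq U$, and the identification $\epsilon_U\equiv u_1+u_2+u_3\pmod U$ with $u_i$ the three ``$\langle\text{pair}\rangle$'' coboundaries. The only difference is the final step showing $u_1+u_2+u_3\in U$: the paper packages the three auxiliary constraints as ``$\Delta\in\langle a+b,c\rangle\iff u_1\in U$'' (and permutations) and finishes with the observation that any element of $X$ lies in an \emph{even} number of $\langle a+b,c\rangle,\langle a+c,b\rangle,\langle b+c,a\rangle$, whereas you expand $u_i$ and $w$ in the basis $\{a,b,c\}$ and add the three resulting $\ZZ/2$-equations directly. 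Your coordinate computation and the paper's parity argument are two renderings of the same linear-algebra fact; yours is more explicit, the paper's slightly more conceptual, but neither buys anything the other does not.
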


\begin{proof}
  We know $\epsilon_U\equiv0\pmod X$.
  If $\epsilon_U=0$ then we have $\ind_X=0$ by definition.  So all we need to
  prove is that if $\ind_X=0$ then $\epsilon_U=0$.

  Suppose $\ind_X=0$.  Then there is another choice of representatives
  $\{e'_x\}$ giving $\epsilon'_V=0$ for all $V\subset X$.
  By Lemma~\ref{lemAction}, each $e'_x=e_x+\delta_x$ for some
  $\delta_x\in K[2]$.  So $e'_x\ e'_y\to
  (e'_{x+y}+\epsilon_{x,y}+\delta_x+\delta_y+\delta_{x+y})\ \star$; but
  for all $x,y\in X$, we know $e'_x\ e'_y\to e'_{x+y}\ \star$, so
  $$
    \epsilon_{x,y}\equiv\delta_x+\delta_y+\delta_{x+y}
    \pmod{\langle x,y\rangle}.
  $$
  Set $\Delta:=\delta_a+\delta_b+\delta_c+\delta_{a+b+c}$.
  Since $\epsilon_V=0$ for all $U\neq V\subset X$, we have
  \begin{align*}
    \delta_{a+b}+\delta_a+\delta_b&\equiv0\pmod{\langle a,b\rangle}\\
    \delta_{a+b}+\delta_c+\delta_{a+b+c}&\equiv0\pmod{\langle a+b,c\rangle},
  \end{align*}
  and, adding these together, we can see that $\Delta\in X$ and
  \begin{align*}
    \Delta\in\langle a+b,c\rangle&\iff
    \delta_{a+b}+\delta_a+\delta_b\equiv0\pmod{\langle a+b\rangle}\\
    &\iff\delta_{a+b}+\delta_a+\delta_b\in U.
  \end{align*}
  Analogous statements hold for other permutations of $\{a,b,c\}$.

  Now it is easy to check that every element of $X$ is in an even number of
  the subgroups $\langle a+b,c\rangle$, $\langle a+c,b\rangle$, and $\langle
  b+c,a\rangle$.  So an even number out of $(\delta_{a+b}+\delta_a+\delta_b)$,
  $(\delta_{a+c}+\delta_a+\delta_c)$, and $(\delta_{b+c}+\delta_b+\delta_c)$
  are in $U$, so their sum is in $U$.  But their sum is
  $$
    \delta_{a+b}+\delta_{a+c}+\delta_{b+c}\equiv\epsilon_U\pmod U,
  $$
  so $\epsilon_U=0$, as required.
\end{proof}

\begin{lemma}\label{lemIndK}
  If $X$ and $Y$ are two rank-3 subgroups of $2K$ then $\ind_X=\ind_Y$.
\end{lemma}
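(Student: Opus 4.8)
The plan is to treat $\ind$ as a locally constant invariant of the rank-3 subgroups of $2K$: I would first reduce to the case in which $X$ and $Y$ meet in a rank-2 subgroup, and then compare $\ind_X$ with $\ind_Y$ inside a common rank-4 subgroup by a single well-chosen application of Lemma~\ref{lemTetragon}.

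For the reduction, note that if $\rank 2K\le 3$ there is at most one rank-3 subgroup and the statement is vacuous or trivial, so I may assume $\rank 2K\ge 4$. An elementary argument in linear algebra over $\ZZ/2$ shows that any two rank-3 subgroups of $2K$ are joined by a chain $X=Z_0,Z_1,\dots,Z_k=Y$ of rank-3 subgroups with $\rank(Z_{i-1}\cap Z_i)=2$ throughout: when $\rank(X\cap Y)\le 1$ one interpolates one or two subgroups obtained by swapping a single basis vector at a time, using $\rank(Z_{i-1}\cap Z_i)\ge\dim Z_{i-1}+\dim Z_i-\dim(Z_{i-1}+Z_i)$ to keep the overlaps of rank $2$. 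Since $\ind$ only takes the values $0$ and $1$, it is then enough to prove $\ind_X=\ind_Y$ under the extra hypothesis $\rank(X\cap Y)=2$.

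So assume $X\cap Y=\langle a,b\rangle$ has rank $2$; if $X=Y$ there is nothing to do, so I choose $c,d$ with $X=\langle a,b,c\rangle$, $Y=\langle a,b,d\rangle$, so that $\{a,b,c,d\}$ is a basis of a rank-4 subgroup $W:=\langle a,b,c,d\rangle\subseteq 2K$, and apply Lemma~\ref{lemStandardform} to $W$ with this basis to fix representatives $\{e_x\}_{x\in W}$. Put $U:=\langle a+b,a+c\rangle$ and $V':=\langle a+b,a+d\rangle$. Every rank-2 subgroup of $X$ other than $U$ is spanned by two disjoint nonzero elements of the basis (for instance $\langle a,b+c\rangle$ by $a$ and $b+c$), so $\epsilon_V=0$ for all such $V$; likewise for $Y$ and $V'$. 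Hence the hypotheses of Lemma~\ref{lemIndX} hold both for $X$ and for $Y$, giving $\ind_X=0\iff\epsilon_U=0$ and $\ind_Y=0\iff\epsilon_{V'}=0$, where moreover the chosen representatives satisfy $\epsilon_U\in X$ and $\epsilon_{V'}\in Y$ by the last clause of Lemma~\ref{lemStandardform}.

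Now I would apply Lemma~\ref{lemTetragon} with $(x,y,z)=(a+b,\,a+c,\,a+d)$, so that $W'':=\langle a+b,a+c,a+d\rangle$ is the modulus and the four terms are $\epsilon_{a+b,a+c}=\epsilon_U$, $\epsilon_{b+c,a+d}$, $\epsilon_{a+b,a+d}=\epsilon_{V'}$ and $\epsilon_{b+d,a+c}$. Here $b+c$ and $a+d$ are disjoint and nonzero, as are $b+d$ and $a+c$, so both cross terms vanish, and the lemma collapses to $\epsilon_U\equiv\epsilon_{V'}\pmod{W''}$, i.e. $\epsilon_U+\epsilon_{V'}\in W''$. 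Since $d\notin X$ and $c\notin Y$, one checks $X\cap W''=U$ and $Y\cap W''=V'$ (each is a rank-2 subgroup of the rank-3 group $W''$ containing the evident rank-2 subgroup). Hence if $\ind_X=0$ then $\epsilon_U\in U\subseteq W''$, so $\epsilon_{V'}\in W''$, and, being in $Y$, $\epsilon_{V'}\in Y\cap W''=V'$, whence $\ind_Y=0$; the converse follows from the symmetry $(X,U,c)\tofrom(Y,V',d)$. Therefore $\ind_X=\ind_Y$.

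The connectivity reduction is routine; the main obstacle I expect is spotting the right instance of Lemma~\ref{lemTetragon} and then doing the $\ZZ/2$-linear bookkeeping that makes both cross terms disjoint products (hence $0$) and identifies $X\cap W''$ and $Y\cap W''$ with $U$ and $V'$. Conceptually, $\ind$ is the obstruction to trivialising a $K[2]$-valued cocycle on rank-3 subgroups, and the lemma says this obstruction does not depend on which rank-3 subgroup is used.
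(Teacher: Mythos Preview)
Your proof is correct and follows essentially the same route as the paper's: reduce to the case $\rank(X\cap Y)=2$ by a chain argument, apply Lemma~\ref{lemStandardform} to the rank-4 span, invoke Lemma~\ref{lemIndX} to identify $\ind_X$ and $\ind_Y$ with the vanishing of the two exceptional $\epsilon$'s, and then use a single instance of Lemma~\ref{lemTetragon} whose cross terms vanish by disjointness. The only cosmetic difference is your choice of Tetragon parameters $(x,y,z)=(a+b,a+c,a+d)$ versus the paper's $(a+b,a+c,c+d)$; yours makes both cross terms manifestly disjoint without first passing through Lemma~\ref{lemRank2}, and your explicit verification that $X\cap W''=U$ and $Y\cap W''=V'$ spells out a step the paper leaves implicit.
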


\begin{proof}
  It is enough to show this when $\rank(X\cap Y)=2$, since any two rank-3
  subgroups can be connected by a chain of subgroups with each 2 consecutive
  members of the chain meeting having rank-2 intersection.

  So we may assume $X=\langle a,b,c\rangle$ and $Y=\langle b,c,d\rangle$,
  and choose $\{e_x\}$ by Lemma~\ref{lemStandardform}.
  Let $U:=\langle a+b,a+c,a+d\rangle$, the group of even sums of
  $\{a,b,c,d\}$.  By Lemma~\ref{lemTetragon},
  $$
    \epsilon_{a+b,a+c}+\epsilon_{b+c,c+d}\equiv
    \epsilon_{a+b,c+d}+\epsilon_{a+b+c+d,b+c}
    \pmod{U}.
  $$
  But according to Lemma~\ref{lemStandardform}, the two terms on the
  right-hand side of this are 0, while by Lemma~\ref{lemIndX},
  $\epsilon_{a+b,a+c}\in U\iff\ind_X=0$ and
  $\epsilon_{b+c,c+d}\in U\iff\ind_Y=0$.
\end{proof}

So if $\rank 2K\geq3$ we can define $\ind_K:=\ind_X$ for any rank-3
$X\subseteq K$.
Finally we can classify 4-torsion Kummer structures:

\begin{theorem}\label{thm4torsion}
  Let $K$ be a 4-torsion, but not 2-torsion, Kummer structure.

  $K\cong \K{G}$ for an abelian group $G$ if and only if $\rank 2K\leq2$ or
  $\ind_K=1$.

  $K\cong \tK{G}{\iota}$ for a twisted group $(G,\iota)$ if and only if
  $\rank 2K\leq2$ or $\ind_K=0$.
\end{theorem}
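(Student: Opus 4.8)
The strategy is to lean on the explicit description, established just before Lemma~\ref{lemTetragon}, of a $4$-torsion Kummer structure up to isomorphism by the data $2K\subseteq K[2]$ together with the pairing $\epsilon_{\cdot,\cdot}$, and to compute the same data for the two model structures $\K{G}$ and $\tK{G}{\iota}$. Suppose first that $\rank 2K\le2$. Then $2K$ has no rank-$3$ subgroup, and its unique rank-$2$ subgroup, if any, is spanned by the basis, hence by a disjoint pair, so Lemma~\ref{lemStandardform} and Lemma~\ref{lemRank2} let us choose $\{e_x\}$ with $\epsilon_{x,y}=0$ for all $x,y$. I would then take $G$ to be the abelian group of exponent dividing $4$ with $2G=2K$ and $G[2]=K[2]$ (resp.\ $(G,\iota)$ the twisted group with $(1+\iota)G=2K$ and $\ker(1+\iota)=K[2]$), which exists and, being determined by the relevant pair of dimensions, is unique up to isomorphism. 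Choosing a basis $(v_i)$ of $2K$, lifts $\hat v_i\in G$ with $2\hat v_i=v_i$, and the section $e_x=\sum_{i\in S(x)}\hat v_i$ (where $S(x)$ is the support of $x$), one computes the structure map of $\K{G}$ to be
\[\-{e_x+u}\ \-{e_y+v}\to\-{e_{x+y}+u+v+\beta(x,y)}\ \-{e_{x+y}+u+v+\beta(x,y)+y},\]
where $\beta(x,y)=\sum_{i\in S(x)\cap S(y)}v_i$ is symmetric $\ZZ/2$-bilinear and satisfies $\beta(x,x)=x$; the twisted case is identical with $\beta$ replaced by $0$. Since $\beta(x,y)\in\langle x,y\rangle$ whenever $\rank 2K\le2$, both model structure maps coincide with the $\epsilon=0$ structure of $K$, so $K\cong\K{G}\cong\tK{G}{\iota}$.

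Now suppose $\rank 2K\ge3$, so that $\ind_K$ is defined. The computations above show $\epsilon^{\tK{G}{\iota}}\equiv0$, hence $\ind_{\tK{G}{\iota}}=0$, and $\epsilon^{\K{G}}_{x,y}=\beta(x,y)\bmod\langle x,y\rangle$, which is a standard-form pairing for the basis $(v_i)$ in the sense of Lemma~\ref{lemStandardform}; moreover, for any rank-$3$ subgroup $X=\langle a,b,c\rangle$ with $a,b,c$ a disjoint triple, $\epsilon^{\K{G}}_{\langle a+b,a+c\rangle}$ is represented by $\beta(a+b,a+c)=a\notin\langle a+b,a+c\rangle$, so $\ind_X=1$ by Lemma~\ref{lemIndX} and thus $\ind_{\K{G}}=1$. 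Since $\ind$ is an isomorphism invariant, and since $K\cong\K{G'}$ forces $G'\cong G$ (using Lemma~\ref{lem2torsion} and the classification of abelian groups of exponent dividing $4$), and likewise $K\cong\tK{G'}{\iota'}$ forces $(G',\iota')\cong(G,\iota)$, the forward implications follow: $K\cong\K{G'}$ implies $\ind_K=1$, and $K\cong\tK{G'}{\iota'}$ implies $\ind_K=0$.

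For the converse with $\rank 2K\ge3$, fix a basis $(v_i)$ of $2K$ and a standard-form section $\{e_x\}$ as in Lemma~\ref{lemStandardform}. The key combinatorial point is that every rank-$2$ subgroup $V'=\langle p,q\rangle\subseteq 2K$ either is spanned by a disjoint pair, whence $\epsilon_{V'}=0$, or else equals $\langle a+b,a+c\rangle$ for the disjoint triple of nonzero elements $a=\sum_{i\in S(p)\cap S(q)}v_i$, $b=\sum_{i\in S(p)\setminus S(q)}v_i$, $c=\sum_{i\in S(q)\setminus S(p)}v_i$, which together span a rank-$3$ subgroup $X_{V'}$; and the given section then satisfies the hypotheses of Lemma~\ref{lemIndX} at $X_{V'}$, since its other rank-$2$ subgroups are spanned by disjoint pairs. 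By Lemma~\ref{lemIndX} and Lemma~\ref{lemIndK}, $\epsilon_{V'}$ lies in $X_{V'}/V'\cong\ZZ/2$, and equals $0$ if $\ind_K=0$ and the nonzero element if $\ind_K=1$. Thus $\epsilon_{\cdot,\cdot}$ is determined entirely by $2K\subseteq K[2]$ and $\ind_K$; since the same holds for $\K{G}$ (which has $\ind=1$) and for $\tK{G}{\iota}$ (which has $\ind=0$), and since a $4$-torsion Kummer structure is determined up to isomorphism by $(2K\subseteq K[2],\epsilon_{\cdot,\cdot})$, we conclude $K\cong\K{G}$ when $\ind_K=1$ and $K\cong\tK{G}{\iota}$ when $\ind_K=0$.

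The step I expect to be the main obstacle is this last paragraph: running the bookkeeping of Lemmas~\ref{lemStandardform}, \ref{lemIndX} and~\ref{lemIndK} uniformly over all the auxiliary subgroups $X_{V'}$ — in particular checking that a single standard-form section for a basis of all of $2K$ simultaneously meets the hypothesis of Lemma~\ref{lemIndX} at each $X_{V'}$, and that $\epsilon_{V'}$ genuinely lands in the index-$2$ subgroup $V'\subseteq X_{V'}$, so that it is pinned down by a single bit. The remaining points — existence of the model groups, including when $2K$ is infinite-dimensional, and the routine verification that the matching structure-map formulas really do define isomorphisms of Kummer structures via $e_x+u\leftrightarrow\-{e_x+u}$ — should be straightforward.
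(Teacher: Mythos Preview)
Your argument for finite $K$ is the paper's, made explicit: the paper asserts in one line that once $\epsilon$ is in the standard form of Lemma~\ref{lemStandardform}, ``by Lemmas~\ref{lemIndX} and~\ref{lemIndK} the pairing $\epsilon_{\cdot,\cdot}$ is entirely determined by $\ind_K$,'' and your dichotomy for a rank-$2$ subgroup $V'$---either it is spanned by a pair disjoint with respect to $(v_i)$, or $V'=\langle a+b,a+c\rangle$ for the disjoint triple $a,b,c$ read off from the supports---is precisely how one cashes this out. Your check that the single $(v_i)$-standard-form section meets the hypotheses of Lemma~\ref{lemIndX} at each $X_{V'}$ is correct: the six rank-$2$ subgroups of $X_{V'}=\langle a,b,c\rangle$ other than $V'$ are each spanned by a $(v_i)$-disjoint pair, and $\epsilon_{a+b,a+c}\equiv 0\pmod{\langle a,b,c\rangle}$ is exactly the second clause of Lemma~\ref{lemStandardform}, so $\epsilon_{V'}$ does land in $X_{V'}/V'\cong\ZZ/2$. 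The explicit computation of $\epsilon$ for $\K G$ and $\tK{G}{\iota}$, and the exhibition of model groups realising each pair $(\rank 2K,\rank K[2])$, are likewise what the paper does, only more explicitly.

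Where you diverge is the infinite case. The paper does \emph{not} attempt a global standard-form section when $\rank 2K=\infty$; instead it goes through Theorem~\ref{thmRecovery}, observing that ``$K\cong\K{G}$'' and ``$K\cong\tK{G}{\iota}$'' are each equivalent to a family of statements about finitely generated (hence finite, since $K$ is $4$-torsion) substructures $L$, and that every such $L$ has $\rank 2L\le 2$ or $\ind_L=\ind_K$, so the finite case applies to each $L$. Your direct route needs Lemma~\ref{lemStandardform} for an infinite basis of $2K$, but as written that lemma is proved only for finite rank, and its inductive step genuinely uses the total element $T=x_1+\cdots+x_n$: both to pin down each $e_{a+x_n}$ modulo $x_n$ via the condition $\epsilon_{a+x_n,T'-a}=0$, and in the second Tetragon application that upgrades $\epsilon_{a,b}\equiv 0\pmod{\langle a,b,x_n\rangle}$ to $\epsilon_{a,b}=0$. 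A transfinite extension is not the routine matter your last paragraph suggests: at a successor stage you must choose each $e_{a+x_\alpha}$ so that $\epsilon_{a+x_\alpha,b}=0$ for \emph{every} $b$ disjoint from $a+x_\alpha$ simultaneously, and the paper's mechanism for reconciling these constraints disappears once there is no $T'$. This is the one real gap; either supply that transfinite extension of Lemma~\ref{lemStandardform}, or, as the paper does, fall back on the finite-substructure reduction via Theorem~\ref{thmRecovery}.
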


\begin{proof}
  For the only-if parts, note that for $K\cong \K{G}$, a rank-3 $X\subseteq
  2K$ is the image of a rank-3 subgroup of $2G$, which is $2\times$ a
  subgroup of $G$ isomorphic to $(\ZZ/4)^3$.  It is easy to check then
  that $\ind_X=1$.
  Similarly, if $K\cong \tK{G}{\iota}$, a rank-3 $X\subseteq 2K$ is the image
  of a rank-3 subgroup of $(1+\iota)G$, which is $(1+\iota)$ times a subgroup
  isomorphic to $((\ZZ/2)^2,\iota_2)^3$, where $\iota_2$ exchanges the two
  factors of $(\ZZ/2)^2$.  It is again easy to check that $\ind_X=0$.

  For the if parts, we have shown that $K$ is determined up to isomorphism
  by the two groups $2K\subseteq K[2]$ and a pairing
  $(x,y)\in(2K)^2\mapsto\epsilon_{x,y}\in K[2]/\langle x,y\rangle$.
  If $K$ is finite, for any basis of $2K$ we can put $\epsilon_{\cdot,\cdot}$
  in standard form as in Lemma~\ref{lemStandardform};
  then by Lemmas \ref{lemIndX} and~\ref{lemIndK},
  the pairing $\epsilon_{\cdot,\cdot}$ is entirely determined by $\ind_K$.
  So a finite 4-torsion Kummer structure $K$ is determined up to isomorphism
  by $\rank 2K$, $\rank K[2]$, and, if $\rank 2K\geq3$, $\ind_K$.

  But if $G=(\ZZ/4)^a\oplus(\ZZ/2)^b$ and $K=\K{G}$ then $\rank 2K=a$,
  $\rank K[2]=a+b$, and, if $a\geq3$, $\ind_K=1$.
  And if $(G,\iota)=((\ZZ/2)^2,\iota_2)^a\oplus(\ZZ/2,1)^b$ and
  $K=\tK{G}{\iota}$ then $\rank 2K=a$, $\rank K[2]=a+b$, and, if $a\geq3$,
  $\ind_K=0$.

  This proves the theorem for finite $K$.  For infinite $K$, we can see from
  Theorem~\ref{thmRecovery} that the property of being isomorphic to the
  Kummer of a group, and the property of being isomorphic to a twisted Kummer,
  are each equivalent to a collection of statements that depend only on the
  structure of finitely generated substructures (for example, the statement
  that $\alpha+(\beta+\gamma)=(\alpha+\beta)+\gamma$ in $K_g$ depends only on
  the substructure generated by $\{\alpha_0,\beta_0,\gamma_0,g\}$).
  A finitely generated substructure $L$ of a 4-torsion Kummer structure $K$
  is finite, and if one of the properties $\rank 2K\leq 2$ or
  $\ind_K=i$ holds, the same property holds for every
  sub-Kummer-structure of $K$, so the theorem can be extended from
  finite $K$ to arbitrary $K$.
\end{proof}

\section{Summary}

Collecting together Theorems \ref{thmRecovery}, \ref{thmNon4torsion}, and
\ref{thm4torsion}, and the Remark after Lemma~\ref{lem2torsion}, we have the
following classification of Kummer structures:

\begin{theorem}
  Every Kummer structure is either the Kummer of an abelian group or a
  twisted Kummer.  If $K\cong \K{G}$ then $G$ is determined up to isomorphism
  by $K$.  If $K\cong \tK{G}{\iota}$ then $(G,\iota)$ is determined up to
  isomorphism by $K$.  The only Kummers that are also twisted Kummers are
  $\K{G}$ where either $G$ is 2-torsion or $G$ is 4-torsion and $\rank
  2G\leq2$, which are isomorphic to $\tK{G}{\iota}$ where $\rank
  (1+\iota)G\leq2$.
  \qed
\end{theorem}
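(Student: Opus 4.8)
The plan is to assemble the three structure theorems according to the trichotomy on $K$: it is either (i) 2-torsion, (ii) non-4-torsion (it contains some $g$ with $4g\neq0$), or (iii) 4-torsion but not 2-torsion. These cases are exhaustive, since 2-torsion implies 4-torsion. For the first assertion: in case (i) the Remark after Lemma~\ref{lem2torsion} identifies $K$ with its own 2-torsion group $K[2]=K$, so $K=\K{K}$; in case (ii), Theorem~\ref{thmNon4torsion} gives $K\cong\K{G}$; in case (iii), Theorem~\ref{thm4torsion} gives $K\cong\K{G}$ or $K\cong\tK{G}{\iota}$ according to whether $\rank 2K\le2$ and the value of $\ind_K$. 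Hence every Kummer structure is a Kummer or a twisted Kummer.

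For uniqueness, suppose $K\cong\K{G}$. If $K$ is 2-torsion then $G$ is 2-torsion and $G\cong K$ outright; otherwise, picking any $g\in K\setminus K[2]$, Theorem~\ref{thmRecovery} says $G$ is isomorphic to $K_g$ equipped with the string operations $(+,\omicron,\rho)$, and since this construction is internal to $K$ (and any two choices of $g$ yield groups each isomorphic to $G$, hence to each other), $G$ is determined up to isomorphism by $K$. The twisted case is the same argument with $(\oplus,\omicron,\rho)$ in place of $(+,\omicron,\rho)$.

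For the last assertion I would first observe that every twisted Kummer is 4-torsion: in $\tK{G}{\iota}$, since $G$ is 2-torsion, $4\-x=2\,\-{x+\iota x}=\-{(x+\iota x)+\iota(x+\iota x)}=\-{2x+2\iota x}=0$. So a structure in case (ii) is never a twisted Kummer, equivalently $\K{G}$ with $G$ not 4-torsion is only a Kummer. In case (i) we have $K=\K{K}=\tK{K}{1}$, so such a $K$ is both, with twisted group $(K,1)$ for which $\rank(1+1)K=0\le2$; moreover a 2-torsion $K$ isomorphic to $\tK{G}{\iota}$ forces $x+\iota x=0$ for all $x$, hence $\iota=1$ and $(G,\iota)\cong(K,1)$. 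In case (iii), Theorem~\ref{thm4torsion} makes ``$K\cong\K{G}$'' equivalent to ``$\rank 2K\le2$ or $\ind_K=1$'' and ``$K\cong\tK{G}{\iota}$'' equivalent to ``$\rank 2K\le2$ or $\ind_K=0$''; since $\ind_K$ is a single value in $\{0,1\}$, both descriptions hold simultaneously exactly when $\rank 2K\le2$.

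It remains to translate these conditions from $K$ back to $G$ and $(G,\iota)$. For $K=\K{G}$, $K$ is 4-torsion iff $G$ is, and via $2\-x=\-{2x}$ and the isomorphism $(\K{G})[2]\cong G[2]$ of Lemma~\ref{lem2torsion} the subgroup $2K$ corresponds to $2G$, so $\rank 2K=\rank 2G$; likewise for $K=\tK{G}{\iota}$, $2K$ corresponds under $(\tK{G}{\iota})[2]\cong G[1+\iota]$ to $(1+\iota)G$, so $\rank 2K=\rank(1+\iota)G$. Combining the cases, the Kummers that are also twisted Kummers are precisely $\K{G}$ with $G$ 2-torsion or with $G$ 4-torsion and $\rank 2G\le2$, and in that situation $K\cong\tK{G'}{\iota}$ exactly for the twisted groups $(G',\iota)$ with $\rank(1+\iota)G'\le2$. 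I do not expect a genuine obstacle: every step invokes one of the earlier theorems, and the only care needed is bookkeeping --- checking the trichotomy is compatible with the exact hypotheses of Theorems~\ref{thmNon4torsion} and~\ref{thm4torsion}, and making the rank identifications via Lemma~\ref{lem2torsion} precise so that the final clause about $\rank 2G\le2$ and $\rank(1+\iota)G\le2$ is stated correctly.
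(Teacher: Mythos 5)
Your proposal is correct and matches the paper's intent exactly: the paper offers no separate argument beyond citing Theorems~\ref{thmRecovery}, \ref{thmNon4torsion}, and~\ref{thm4torsion} and the Remark after Lemma~\ref{lem2torsion}, and your write-up is precisely that assembly, carried out via the same trichotomy (2-torsion / non-4-torsion / 4-torsion-but-not-2-torsion) with the bookkeeping (twisted Kummers are 4-torsion, $\rank 2K=\rank 2G$ resp.\ $\rank(1+\iota)G$) filled in correctly.
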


We can therefore characterise Kummers of groups by adding to
\axioms/ the statement that $K$ has no substructure isomorphic to the unique
twisted Kummer $L$ with $\rank 2L=\rank L[2]=3$, which is
$\tK{(\ZZ/2)^6}{\iota}$, where the involution $\iota$ acts on a basis
$\{x_0,\dots,x_5\}$ by $x_i\mapsto x_{(i+3)\bmod6}$.

\begin{remark}
  In fact, $G$ or $(G,\iota)$ is determined by $K$ in a constructive and
  computable way.
  For 2-torsion $K$ this is trivial, while for non-2-torsion $K$
  Theorem~\ref{thmRecovery} shows that the elements of $G$ can be represented
  as strings, and Lemma~\ref{lemString} shows that a string can be represented
  by a pair of elements of $K$.  The string $\gamma=\alpha+\beta$ or
  $\gamma=\alpha\oplus\beta$ of Section~\ref{secStrings} is determined up to
  finite ambiguity from $\alpha_0\ \beta_0\to\gamma_0\ \star$, and the proof
  of Lemma~\ref{lemPartialfunctions} shows that this ambiguity can be removed
  by extending the strings to a sufficient (finite) length and considering all
  the conditions on $\gamma_0$ and $\gamma_1$ that then arise.
\end{remark}

\end{document}